\theoremstyle{definition}
\newtheorem{theorem}{Theorem}[section]
\newtheorem{proposition}[theorem]{Proposition}
\newtheorem{claim}[theorem]{Claim}
\newtheorem{lemma}[theorem]{Lemma}
\newtheorem{corollary}[theorem]{Corollary}
\newtheorem{remark}[theorem]{Remark}
\newtheorem{remarks}[theorem]{Remarks}
\newtheorem{notation}[theorem]{Notations}
\newtheorem{definition}[theorem]{Definition}
\newtheorem{fact}[theorem]{Fact}
\newcommand{\blue}{\textcolor[rgb]{0.00,0.00,1.00}}
\newcommand{\mo}{\triangleleft}
\newcommand{\Ult}{\operatorname{Ult}}
\newcommand{\fr}{{}^\frown}
\newcommand{\name}{\dot}
\newcommand{\can}{\check}
\newcommand{\force}{\Vdash}
\newcommand{\la}{\langle}
\newcommand{\ra}{\rangle}
\newcommand{\uhr}{\upharpoonright}
\newcommand{\po}{\mathcal{P}}
\newcommand{\D}{\mathfrak{D}}
\newcommand{\delset}{\Delta}
\newcommand{\U}{\mathcal{U}}
\newcommand{\K}{\mathcal{K}}
\newcommand{\pzero}{\mathcal{P}^0}
\renewcommand{\i}{i}
\newcommand{\W}{\mathcal{W}}
\newcommand{\rsb}{\supseteq} 
\newcommand{\pone}{\mathcal{P}^1}
\newcommand{\qzero}{\mathcal{Q}^0}
\newcommand{\qone}{\mathcal{Q}^1}
\newcommand{\Gzero}{G^0}
\newcommand{\Gone}{G^1}
\newcommand{\Uz}[2]{U^0_{(#1,#2)}}
\newcommand{\Uo}[2]{U^1_{(#1,#2)}}
\newcommand{\jz}[2]{j^0_{(#1,#2)}}
\newcommand{\iz}[4]{i^{0,M_{(#1,#2)}}_{(#3,#4)}}
\newcommand{\jdz}[4]{j^0_{(#1,#2),(#3,#4)}}
\newcommand{\jo}[2]{j^1_{(#1,#2)}}
\newcommand{\Mz}[2]{M^0_{(#1,#2)}}
\newcommand{\Mo}[2]{M^1_{(#1,#2)}}
\newcommand{\T}{T}
\newcommand{\X}[2]{X_{(#1,#2)}}
\newcommand{\pX}{\mathcal{P}^X}
\newcommand{\qX}{\mathcal{Q}^X}
\newcommand{\GX}{G^X}
\newcommand{\UX}[2]{U^X_{(#1,#2)}}
\newcommand{\jX}[2]{j^X_{(#1,#2)}}
\newcommand{\MX}[2]{M^X_{(#1,#2)}}
\DeclareMathOperator{\otp}{otp}
\DeclareMathOperator{\Sacks}{Sacks}
\DeclareMathOperator{\Code}{Code}
\DeclareMathOperator{\supp}{supp}
\DeclareMathOperator{\On}{On}
\DeclareMathOperator{\cp}{cp}
\DeclareMathOperator{\len}{len}
\DeclareMathOperator{\Cf}{Cof}
\DeclareMathOperator{\rng}{rng}
\DeclareMathOperator{\dom}{dom}
\DeclareMathOperator{\rank}{rank}
\DeclareMathOperator{\GCH}{GCH}
\DeclareMathOperator{\CU}{CU}
\DeclareMathOperator{\cu}{cu}
\DeclareMathOperator{\tamerank}{Trank}
\title{The structure of the Mitchell order - I}
\author{Omer Ben-Neria 
\footnote{The paper is a part of the author Ph.D. written in Tel-Aviv University under the supervision of
Professor Moti Gitik.}}
\date{{}}
\begin{document}
\maketitle
\begin{abstract}
We isolate here a wide class of well founded orders called tame orders, 
and show that each such order of cardinality at most $\kappa$ can be realized as the Mitchell order on 
a measurable cardinal $\kappa$, from a consistency assumption weaker than $o(\kappa) = \kappa^+$.
\end{abstract}

\section{Introduction}\label{Section - Introduction}
This paper is the first of {a two-part study} on 
the possible structure of the Mitchell order.
In this first paper{,} we identify a large class of well-founded
orders with some appealing properties, and prove
that each of its members can be realized as {$\mo(\kappa)$ -- the Mitchell order
on the set of normal measures on $\kappa$.} 
In \cite{Mitchell - MO} Mitchell introduced the following relation:
Given two normal measures $U,W$, we write $U \mo W$ to denote that
$U \in M_W \cong \Ult(V,W)$. 
Mitchell proved that $\mo$ is a well founded order now known as the Mitchell 
ordering. The Mitchell ordering and its extension to arbitrary extenders have become a major tool in the study of large cardinals, with important applications to
consistency results and inner model theory.
Given a cardinal $\kappa$, we write $o(\kappa)$ to denote the rank of the well-founded order $\mo(\kappa)$.
The research on the possible structure on the Mitchell order $\mo(\kappa)$ is closely related to the question of its possible size, namely, the number of normal measures on $\kappa$: The first results by Kunen \cite{Kunen} and by Kunen and Paris \cite{Kunen-Paris} showed that this number can take the extremal values of $1$ and $\kappa^{++}$ (in a model of $\GCH$) respectively.
Soon after, Mitchell \cite{Mitchell - MO} \cite{Mitchell - MO-rev}, showed that this size can be any cardinal $\lambda$ between $1$ and $\kappa^{++}$, under the large cardinal assumption and in a model of $o(\kappa) = \lambda$. Baldwin \cite{Baldwin} showed that for $\lambda < \kappa$ and from stronger large cardinal assumptions, $\kappa$ can also be the first measurable cardinal.
Apter-Cummings-Hamkins \cite{Apter Cummings Hamkins} proved that there can be $\kappa^+$ normal measures on $\kappa$ from the minimal assumption of a single measurable cardinal; for $\lambda < \kappa^+$, Leaning \cite{Leaning} reduced the large cardinal assumption from $o(\kappa) = \lambda$ to an assumption weaker than $o(\kappa) = 2$. 
The question of the possible number of normal measures on $\kappa$ was finally resolved by Friedman and Magidor in \cite{Friedman-Magidor - Normal Measures}, were it is shown that $\kappa$ can carry any number of normal measures $1 \leq \lambda \leq \kappa^{++}$ from the minimal assumption.
The Friedman-Magidor poset will be extensively used in this paper and {the subsequent} part II.

Further results where obtained on the possible structure of the Mitchell order: Mitchell \cite{Mitchell - MO} and Baldwin \cite{Baldwin} showed that 
from some large cardinal assumptions, every well-order and pre-well-order (respectively) can be isomorphic to $\mo(\kappa)$ at some $\kappa$. 
Cummings \cite{Cummings I},\cite{Cummings II}, and Witzany \cite{Witzany} studied the $\mo$ ordering in various generic extensions, and showed that $\mo(\kappa)$ can have a rich structure.  Cummings constructed models where $\mo(\kappa)$ embeds every order from a specific family of orders we call tame. Witzany showed that in a Kunen-Paris extension of a Mitchell model $L[\mathcal{U}]$, with $o^{\U}(\kappa) = \kappa^{++}$, every well-founded order of cardinality $\leq \kappa^+$ embeds into $\mo(\kappa)$. However, the general question of the possible structure of $\mo(\kappa)$ {has} remained open.

In this paper and {the subsequent} part II (\cite{OBN - MOII}) we gradually develop a series of techniques by which 
we obtain increasing variety of possible $\mo$ structures from increasingly large cardinal assumptions:
In this paper, we develop a technique {for realizing} a wide family of well founded orders 
 called \emph{tame orders} from assumptions weaker than the existence of a measurable cardinal $\kappa$ with $o(\kappa) = \kappa^+$. 
 In part II (\cite{OBN - MOII}) we increase our large cardinal assumption slightly above the existence of a sharp to a strong cardinal $0^{\P}$,
 and show that every well-founded order can be consistently realized as $\mo(\kappa)$ on {a} measurable cardinal $\kappa$. \\
{The forcing constructions in both papers obey the following guidelines:}\\

\noindent \textbf{1.} The ground model $V = \K(V)$ is a core model, presented as an extender model $L[E]$ (as in \cite{Steel - HB}) or a Mitchell model $L[\U]$ (see \cite{Mitchell - MO-rev}
or \cite{Mitchell - HB}).\\

\noindent \textbf{2.} An intermediate forcing extension $V' = V[G']$ is introduced, to serve as an intermediate ground for a final $\mo$ structure. 
{Our goal is to make $\mo(\kappa)^{V'}$  as rich as possible (relative to the large cardinal assumption) while ensuring that
the normal measures on $\kappa$ are separated by sets.} We say that the normal measures on $\kappa$ are separated 
when one can assign each normal measure $U$ on $\kappa$ a set $X_U \in U$ which does not belong to any distinct normal measure $U' \neq U$.\\

\noindent \textbf{3.} In a final last extension, we restrict $\mo(\kappa)^{V'}$ to any chosen $\mathcal{W} \subset \mo(\kappa)^{V'}$ of 
cardinality $|\mathcal{W}| \leq \kappa$. We refer to this last forcing as a \emph{final cut}.
The final cut relies on the fact that the normal measures in $V'$ are separated by sets.\\

\noindent The orders at the center of this paper are \emph{tame orders}. For every ordinal $\lambda$, we define an order 
$(R_\lambda,<_{R_{\lambda}})$ by $R_\lambda = \{(\alpha,\beta) \in \lambda^2 \mid \alpha \leq \beta\}$,  and $(\alpha,\beta) <_{R_\lambda} (\alpha',\beta') \iff \beta < \alpha'$.
In Section \ref{Section - I - Tame Orderings}, we introduce tame orders 
and show that up to a simple operation called \emph{reduction}, every tame order $(S,<_S)$ embeds in some $(R_\lambda,<_{R_{\lambda}})$. The rest of the paper is {largely devoted} to realizing $R_\lambda$ using $\mo(\kappa)$. 
The following observation relates $R_{\lambda}$ to the generalized Mitchell order in $V$: Suppose that $\la U_\alpha \mid \alpha < \lambda\ra$ is a $\mo-$increasing sequence, and let $t$ be the map defined by 
\[
t(\alpha,\beta) = 
\begin{cases}
 U_\alpha \times U_\beta & \text{ if } \alpha<\beta \\
  U_\alpha & \text{ if } \alpha =\beta. \\
\end{cases}
\]
Then $t$ defines an isomorphism of $(R_\lambda,<_{R_\lambda})$ with a set of ultrafilters in $V$, ordered
by $\mo$. 
Here $U_{\alpha} \times U_\beta = \{ X \subseteq \kappa^2 \mid \{\nu \mid \{\mu \mid (\nu,\mu) \in X\} \in U_\beta\}\in U_\alpha\}$.
The purpose of the main forcing is to reduce each $U_\alpha \times U_\beta$ to a normal measure on $\kappa$ to construct an intermediate model $V'$ where $\mo(\kappa)^{V'}$ embeds $R_\lambda$. 
This is done by forcing with a Magidor iteration of one-point Prikry forcings  $\pone$. The iteration introduces an almost injective function
$d : \kappa \to \kappa$ and ultrafilters $\Uo{\alpha}{\beta}$, so that the map $\nu \mapsto (\nu,d^{-1}(\nu))$ defines an isomorphism of $\Uo{\alpha}{\beta}$ with an extension of $U_\alpha\times U_\beta$.
However, there is a problem with forcing directly over $V$. 
The one-point Prikry forcing at stage $\nu < \kappa$, is based on a normal measure $U_{\nu,\alpha}$ on $\nu$ (where $\alpha < o(\nu)$).
For each $\beta < o(\kappa)$, we get that the choice $\nu \mapsto U_{\nu,\alpha}$ determines a unique measure $U_{\alpha}$ modulo $U_\beta$ ($\alpha < \beta$), and it follows that we cannot form a normal projection of $U_\alpha \times U_\beta$ for more than a single $\alpha$. This problem is solved by first forcing with a Friedman-Magidor poset $\pzero$. The forcing $\pzero$ splits each $U_\beta$ into $\kappa$ many $\mo-$equivalent extensions. The different extensions allow us to simultaneously deal with $U_\alpha \times U_\alpha$ for every $\alpha < \beta$. 

{Sections \ref{Section - I - description of j}} through {\ref{Section - I - identify normal measures}} provide an analysis of the normal measures $\Uo{\alpha}{\beta}$ in a $\pzero * \pone$ generic extension $V'$. 
It is shown that $\mo(\kappa)^{V'}$ embeds $R_\lambda$ and that the normal measures are separated by sets. 
The analysis of the measures $\Uo{\alpha}{\beta}$ focuses on the iterated ultrapowers obtained from the restriction of $i_{\Uo{\alpha}{\beta}} : V' \to \Ult(V',\Uo{\alpha}{\beta})$ to $V = \K(V')$.
Finally, in {Section \ref{Section - I - final cut}}, we introduce the \emph{final cut} iteration and apply it to $V'$ to remove unwanted measures without changing the $\mo$ structure on the rest. This construction is used to prove the main result in this paper, Theorem \ref{Theorem - I - main theorem}.\\

\noindent {The notations in this paper obey the following conventions:}\\
A pair $(S,<_S)$ will be called an \emph{order}, if 
$<_S \subset S \times S$ is a relation which defines a partial order
(anti-symmetric and transitive relation) on $S$. 
When there is no danger of confusion, we will use $S$
to denote the entire order $(S,<_S)$.
By a \emph{suborder} of $(S,<_S)$ we mean the restriction of $(S,<_S)$ to a subset $X \subset S$, and denote it by $(X,<_S\uhr X)$.
We use the Jerusalem convention for the forcing order, in which $p \geq q$ means that
$p$ is stronger than $q$. Thus the trivial condition of $\po$ will be denoted by $0_{\po}$.
A name of a set $x$ in a generic extension will be denoted by $\name{x}$, and a canonical name
for an element $x$ in the ground model $V$ will be denoted by $\can{x}$. 
In certain cases we will write $V^{\po}$ to denote a generic extension of $V$ by a generic filter of $\po$.

\section{Tame Orders}\label{Section - I - Tame Orderings}
We define a family of orders called Tame Orders which is closely related to the orders $R_\lambda$, $\lambda \in \On$, introduced above.
The relation between tame orders and the orders $R_\lambda$ is given in 
Proposition \ref{Proposition - tame} and makes use of the notions of
\emph{reduced orders} (Definition \ref{Definition - DU sets}) and \emph{tame ranks} (Definition \ref{Definition - Tame Rank}).

\begin{definition}${}$
\begin{enumerate}
\item $(R_{2,2},<_{R_{2,2}})$ is an order on a set of four elements 
$R_{2,2} = \{x_0,x_1,y_0,y_1\}$, defined by 
$<_{R_{2,2}} = \{ ( x_0,y_0) , ( x_1,y_1 )\}$.

\newcommand*{\zc}{0}%
 \newcommand*{\oc}{1}%
\begin{center}
\begin{tikzpicture}[xscale=0.5, yscale=0.6]
    \draw  node [] at (\zc,\zc) {$\bullet$};
      \draw  node [left] at (\zc,\zc) {\blue{$x_0$}};
      \draw  node [] at (\zc,\oc) {$\bullet$};
      \draw  node [left] at (\zc,\oc) {\blue{$y_0$}};
      \draw  node [] at (\oc,\zc) {$\bullet$};
      \draw  node [right] at (\oc,\zc) {\blue{$x_1$}};
      \draw  node [] at (\oc,\oc) {$\bullet$};
      \draw  node [right] at (\oc,\oc) {\blue{$y_1$}};
      \draw [thick,black] (\zc,\zc) -- (\zc,\oc);      
      \draw [thick,black] (\oc,\zc) -- (\oc,\oc);            
\end{tikzpicture}
\end{center}

\item $(S_{\omega,2},<_{S_{\omega,2}})$ is an order on a disjoint union of two  countable sets
$S_{\omega,2} = \{x_n\}_{n < \omega} \uplus \{y_n \}_{n<\omega}$, defined by
$<_{S_{\omega,2}} = \{ (x_m, y_n) \mid m \geq n\}$.

\begin{center}
\begin{tikzpicture}[xscale=0.8, yscale=0.37]

    \pgfmathtruncatemacro\mult{1} 
    
    \draw [step=1.0mm,thin,black]  node [] at (0,0) {$\bullet$};
    \draw [step=1.0mm,thin,black]  node [below] at (0,0) {$x_0$};
    
    \draw [step=1.0mm,thin,black]  node [] at (1,0) {$\bullet$};
    \draw [step=1.0mm,thin,black]  node [below] at (1,0) {$x_1$};
    
    \draw [step=1.0mm,thin,black]  node [] at (2,0) {$\bullet$};
    \draw [step=1.0mm,thin,black]  node [below] at (2,0) {$x_2$};
    
    \draw [step=1.0mm,thin,black]  node [] at (3,0) {$\dots\dots$};
    \draw [step=1.0mm,thin,black]  node [] at (4,0) {$\dots$};
    
    \draw [step=1.0mm,thin,black]  node [] at (5,0) {$\bullet$};
    \draw [step=1.0mm,thin,black]  node [below] at (5,0) {$x_n$};
    
        \draw [step=1.0mm,thin,black]  node [] at (6,0) {$\dots\dots$};
    
        \draw [step=1.0mm,thin,black]  node [] at (0,3) {$\bullet$};
    \draw [step=1.0mm,thin,black]  node [above] at (0,3) {$y_0$};
    
    \draw [step=1.0mm,thin,black]  node [] at (1,3) {$\bullet$};
    \draw [step=1.0mm,thin,black]  node [above] at (1,3) {$y_1$};
    
    \draw [step=1.0mm,thin,black]  node [] at (2,3) {$\bullet$};
    \draw [step=1.0mm,thin,black]  node [above] at (2,3) {$y_2$};
    
    \draw [step=1.0mm,thin,black]  node [] at (3,3) {$\dots\dots$};
    \draw [step=1.0mm,thin,black]  node [] at (4,3) {$\dots$};
    
    \draw [step=1.0mm,thin,black]  node [] at (5,3) {$\bullet$};
    \draw [step=1.0mm,thin,black]  node [above] at (5,3) {$y_n$};
    
   \draw [step=1.0mm,thin,black]  node [] at (6,3) {$\dots\dots$};

     \foreach \i in {0,...,6} {
        \draw [thin,black] (0,3) -- (\i,0) ;}

     \foreach \i in {1,...,6} {
        \draw [thin,red] (1,3) -- (\i,0) ;}
        
     \foreach \i in {2,...,6} {
        \draw [thin,blue] (2,3) -- (\i,0) ;}
        
     \foreach \i in {5,...,6} {
        \draw [thin,teal] (5,3) -- (\i,0) ;}

\end{tikzpicture}
\end{center}

\end{enumerate}
\end{definition}

\noindent Let $(S,<_S)$ and $(R,<_R)$ be two orders.
An injection $\pi : S \to R$ is an \textit{embedding} of $(S,<_S)$ into $(R,<_R)$ if it compatible and incompatible preserving. We say that $(R,<_R)$ embeds $(S,<_S)$.

\begin{definition}[Tame Orders]\label{Definition - Tame Orders}${}$\\
An order $(S,<_S)$ is \emph{tame} if it does not embed $R_{2,2}$ nor $S_{\omega,2}$.
\end{definition}

The rest of this section is devoted to describing how tame orders relates to the orders $R_\lambda$, $\lambda \in \On$.

\begin{definition}\label{Definition - DU sets}
Let $(S,<_S)$ be an order.
\begin{enumerate}
\item 
For $x \in S$ let $d(x) = \{ z \in S \mid z <_S x\}$ and $u(x) = \{ z \in S \mid x <_S z\}$.
\item Let $\sim_S$ be the equivalence relation on $S$, define by $x \sim_S y$ if and only if
$(d(x),u(x)) = (d(y),u(y))$
\item 
$(S,<_S)$ is \textit{reduced} if and only if there are no distinct $\sim_S$ equivalent elements in $S$.

\item For any $(S,<_S)$ let $([S],<_{[S]}) = (S,<_S)/ \sim_S$ be the induced order on $\sim_S$ equivalent classes. $([S],<_{[S]})$ is clearly reduced. 
\end{enumerate}
\end{definition}

The main result in this section (Proposition \ref{Proposition - tame}) shows that a well-founded reduced order $(S,<_S)$ is tame if and only if it embeds in $R_\lambda$ for some $\lambda < |S|^+$.

\begin{lemma}\label{Observation - key tame observation}
Let $(S,<_S)$ be an order. The following are equivalent:
\begin{enumerate}
\item\label{1} $(S,<_S)$ does not embed $R_{2,2}$.
\item\label{2} For every $x,x' \in S$, the sets $u(x)$,$u(x')$ are $\subseteq-$comparable.
\item\label{3} For every $x,x' \in S$, the sets $d(x)$,$d(x')$ are $\subseteq-$comparable.
\end{enumerate}
\end{lemma}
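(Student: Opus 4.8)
The plan is to prove $(1)\iff(3)$ in full and then obtain $(1)\iff(2)$ for free by order-reversal. Indeed, $R_{2,2}$ is self-dual: reversing $<_{R_{2,2}}$ and relabelling $x_i\leftrightarrow y_i$ gives back the same order; and passing from $(S,<_S)$ to the reverse order $(S,<_S^{\mathrm{op}})$ interchanges $d(x)$ with $u(x)$ for every $x\in S$, while (by self-duality) it does not change whether $R_{2,2}$ embeds. So once $(1)\iff(3)$ is known, applying it to $(S,<_S^{\mathrm{op}})$ yields $(1)\iff(2)$. Throughout I use that an order-embedding $\pi$ both preserves and reflects $<$: equivalently (by injectivity together with irreflexivity and antisymmetry) $\pi$ sends comparable pairs to comparable pairs with the same orientation, and incomparable pairs to incomparable pairs. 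No well-foundedness of $(S,<_S)$ is needed anywhere.

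I would prove $(1)\Rightarrow(3)$ contrapositively. Suppose $d(x),d(x')$ are $\subseteq$-incomparable, and choose $z\in d(x)\setminus d(x')$ and $z'\in d(x')\setminus d(x)$, so that $z<_S x$, $z'<_S x'$, $z\not<_S x'$ and $z'\not<_S x$. The claim is that the four elements $z,x,z',x'$ are pairwise distinct and that their only $<_S$-relations are $z<_S x$ and $z'<_S x'$; granting this, $x_0\mapsto z$, $y_0\mapsto x$, $x_1\mapsto z'$, $y_1\mapsto x'$ is an embedding of $R_{2,2}$ into $S$, so $(1)$ fails. Every excluded relation is killed by a one-line use of transitivity against $z\not<_S x'$ and $z'\not<_S x$: for instance $x<_S x'$ would give $z<_S x'$; $x'<_S x$ would give $z'<_S x$; $z<_S z'$ would give $z<_S x'$; $z'<_S z$ would give $z'<_S x$; $x<_S z'$ would give $z<_S x'$; and $x'<_S z$ would give $z'<_S x$. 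Distinctness of the four points then follows from these together with irreflexivity. This short case check is essentially the only content of the argument, hence the ``main obstacle'', though it is entirely routine.

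For $(3)\Rightarrow(1)$, again contrapositively: let $\pi:R_{2,2}\to S$ be an embedding and set $a_i=\pi(x_i)$, $b_i=\pi(y_i)$, four distinct elements with $a_0<_S b_0$, $a_1<_S b_1$, and all remaining pairs incomparable. In particular $a_0<_S b_0$ but $a_0\not<_S b_1$, and $a_1<_S b_1$ but $a_1\not<_S b_0$; hence $a_0\in d(b_0)\setminus d(b_1)$ and $a_1\in d(b_1)\setminus d(b_0)$, so $d(b_0)$ and $d(b_1)$ are $\subseteq$-incomparable and $(3)$ fails. Combining the two directions gives $(1)\iff(3)$, and the duality remark above then delivers $(1)\iff(2)$, completing the proof.
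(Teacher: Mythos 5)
Your proof is correct and follows essentially the same route as the paper: an incomparable pair of $d$-sets (resp.\ $u$-sets) yields exactly the four witnesses needed for an embedded copy of $R_{2,2}$, and conversely an embedded copy of $R_{2,2}$ produces such an incomparable pair. The only differences are cosmetic --- you spell out the routine distinctness/incomparability verification more fully than the paper does, and you obtain the $u$-clause from the $d$-clause via self-duality of $R_{2,2}$ rather than repeating the symmetric argument.
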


\begin{proof}
In $R_{2,2}$, the sets $u(x_0),u(x_1)$ are $\subseteq-$incomparable as $y_0 \in u(x_0) \setminus u(x_1)$ and 
$y_1 \in u(x_1)\setminus u(x_0)$. If $\pi : R_{2,2} \to S$ is an embedding it follows that $u(\pi(x_0)), u(\pi(x_1))$ are $\subseteq-$incomparable. Therefore \ref{2} implies \ref{1}. The fact that
$d(y_0)$ and $d(y_1)$ are $\subseteq-$incomparable is similarly used to show that \ref{3} implies \ref{1}.\\
Suppose now that $(S,<_S)$ does not embed $R_{2,2}$, and 
let $x,x' \in S$. {If $u(x)$ and $u(x')$ were $\subseteq-$incomparable,
 there would be some $y,y'$ so that }
\begin{enumerate}
 \item $x <_S y$, $x' \nless_S y$, and
 \item $x' <_S y'$, $x \nless_S y'$. 
\end{enumerate}
This is impossible as it would imply that
$<_S \uhr \{x,x',y,y'\}$ is isomorphic to $R_{2,2}$. It follows that
\ref{1} implies \ref{2}.
The proof of that \ref{1} implies \ref{3} is similar.
\end{proof}

\begin{definition}\label{Definition - I - CU stuff}
Let $(S,<_S)$ be an order.
\begin{enumerate}
\item For every $x \in S$ define $\cu(x) = S \setminus u(x) = 
\{y \in S \mid x \nless_S y\}$.

\item Define $D(S) = \{ d(x) \mid x \in S\}$ and 
$\CU(S) = \{ \cu(x) \mid x \in S\}$. 

\end{enumerate}
\end{definition}

\noindent For $x,y \in S$, $u(x) \rsb u(y)$ if and only if $\cu(x) \subseteq \cu(y)$, hence $(\CU(S),\subseteq) \cong (U(S),\rsb)$, where
$U(S) = \{u(x) \mid x \in S\}$.
By Lemma \ref{Observation - key tame observation},
if $(S,<_S)$ does not embed $R_{2,2}$ then 
$(D(S),\subseteq)$, $(\CU(S),\subseteq)$ are linear. 

\begin{lemma}
The following are equivalent for a well founded order $(S,<_S)$ which does not embed $R_{2,2}$:
\begin{enumerate}\label{Lemma - almost tame}
\item\label{11}  $(S,<_S)$ does not embed $(S_{\omega,2},<_{S_{\omega,2}})$.
\item\label{22} $(D(S),\subsetneq)$ is well founded.
\item\label{33} $(\CU(S),\subsetneq)$ is well founded.
\end{enumerate}
\end{lemma}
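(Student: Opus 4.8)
The plan is to establish $\ref{22}\Leftrightarrow\ref{33}$ directly, then $\ref{22}\Rightarrow\ref{11}$ and $\ref{11}\Rightarrow\ref{22}$ by contraposition; since $\ref{22}$ and $\ref{33}$ are equivalent, $\ref{33}\Rightarrow\ref{11}$ follows as well. The standing hypothesis that $(S,<_S)$ does not embed $R_{2,2}$ will be used throughout: by Lemma~\ref{Observation - key tame observation} together with the remark following Definition~\ref{Definition - I - CU stuff}, the orders $(D(S),\subseteq)$ and $(\CU(S),\subseteq)$ are linear, so ``well founded'' for either of them means simply ``contains no infinite strictly $\subsetneq$-descending sequence''; I also use freely that $u(x)\supseteq u(y)$ iff $\cu(x)\subseteq\cu(y)$.

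For $\ref{22}\Rightarrow\ref{11}$ I prove the contrapositive: if $\pi\colon S_{\omega,2}\to S$ is an embedding, then from $x_n<_{S_{\omega,2}}y_n$ and $x_n\nless_{S_{\omega,2}}y_{n+1}$ we get $\pi(x_n)\in d(\pi(y_n))\setminus d(\pi(y_{n+1}))$, and since $d(\pi(y_n)),d(\pi(y_{n+1}))$ are $\subseteq$-comparable this forces $d(\pi(y_{n+1}))\subsetneq d(\pi(y_n))$ for all $n$, so $(D(S),\subsetneq)$ is not well founded. For $\ref{22}\Leftrightarrow\ref{33}$ I prove the two contrapositives, which are dual. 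If $(\CU(S),\subsetneq)$ is not well founded, fix $\cu(a_0)\supsetneq\cu(a_1)\supsetneq\cdots$, i.e.\ $u(a_0)\subsetneq u(a_1)\subsetneq\cdots$; picking $b_n\in u(a_{n+1})\setminus u(a_n)$ and chasing the chain yields $a_m<_S b_n\iff m\ge n+1$, hence $a_{n+1}\in d(b_n)\setminus d(b_{n+1})$, and $\subseteq$-comparability of $d(b_n),d(b_{n+1})$ forces $d(b_{n+1})\subsetneq d(b_n)$, so $(D(S),\subsetneq)$ is not well founded. Symmetrically, from $d(a_0)\supsetneq d(a_1)\supsetneq\cdots$ one picks $b_n\in d(a_n)\setminus d(a_{n+1})$, checks $b_m<_S a_n\iff m\ge n$, and reads off a strictly $\subsetneq$-descending chain $\cu(b_0)\supsetneq\cu(b_1)\supsetneq\cdots$ in $\CU(S)$.

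The substantive implication is $\ref{11}\Rightarrow\ref{22}$, again via the contrapositive: assuming $(D(S),\subsetneq)$ is not well founded, I construct an embedding of $S_{\omega,2}$ into $S$. Fix $d(a_0)\supsetneq d(a_1)\supsetneq\cdots$. Since $a_i<_S a_j$ implies $d(a_i)\subseteq d(a_j)$ and hence $i>j$, every $<_S$-chain among the $a_n$ is finite; as $S$ is well founded an infinite set of indices homogeneous for ``$a_m$ comparable to $a_n$'' would yield an infinite $<_S$-chain, which is impossible, so by Ramsey's theorem (or by thinning to a subsequence on which the $(S,<_S)$-ranks of the $a_n$ are non-decreasing) we may, after passing to a subsequence and keeping the $d$-chain strictly descending, assume the $a_n$ pairwise $<_S$-incomparable. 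Now choose $b_n\in d(a_n)\setminus d(a_{n+1})$, so that $b_m<_S a_n\iff m\ge n$. The same extraction applies to the $b_n$ (if $b_i<_S b_j$ then $b_i<_S b_j<_S a_j$ forces $i>j$, so $<_S$-chains among the $b_n$ are finite), so after one more synchronous thinning we may assume the $b_n$ pairwise incomparable, with $b_m<_S a_n\iff m\ge n$ still holding. Finally $a_n\nless_S b_m$ for all $m,n$ (else $a_n<_S b_m<_S a_m$ contradicts incomparability, resp.\ irreflexivity, of the $a_n$), and a direct check shows $x_n\mapsto b_n$, $y_n\mapsto a_n$ is an order embedding of $S_{\omega,2}$ into $S$; this completes the proof.

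The crux is the last paragraph: turning a bare strictly descending chain in $D(S)$ into an isomorphic copy of $S_{\omega,2}$ requires two successive thinnings to remove unwanted comparabilities, and this is exactly the point where the well-foundedness of $S$ itself is needed --- to preclude the infinite descending $<_S$-chains that those comparabilities would otherwise produce. The remaining implications are routine once one has the linearity of $(D(S),\subseteq)$ and $(\CU(S),\subseteq)$ supplied by the prohibition of $R_{2,2}$.
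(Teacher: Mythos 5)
Your proof is correct, and the core of the substantive implication (from a strictly $\subsetneq$-descending chain in $D(S)$, build witnesses, thin via Ramsey and well-foundedness of $S$ to force pairwise incomparability, and read off a copy of $S_{\omega,2}$) is the same as the paper's. Where you differ is in the decomposition. The paper proves $\ref{22}\Rightarrow\ref{11}$ and $\ref{33}\Rightarrow\ref{11}$, then $\ref{11}\Rightarrow\ref{22}$ in full and closes with ``a similar argument shows $\ref{11}\Rightarrow\ref{33}$,'' so it implicitly runs the Ramsey construction twice. You instead prove $\ref{22}\Leftrightarrow\ref{33}$ directly as a duality (a $\subsetneq$-descending chain in $\CU(S)$ with witnesses $b_n\in u(a_{n+1})\setminus u(a_n)$ yields a $\subsetneq$-descending chain $d(b_0)\supsetneq d(b_1)\supsetneq\cdots$, and symmetrically), which uses only the linearity supplied by the $R_{2,2}$ exclusion and no well-foundedness; the Ramsey argument is then needed only once. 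That is a modest but genuine simplification. Two small points inside the hard direction: the paper avoids your second thinning by showing directly that $d(x_m)\subseteq d(y_m)$, so $x_n<_S x_m$ would put $x_n\in d(y_m)$ against the choice of $x_n$; and the paper explicitly verifies that $\{x_n\}$ and $\{y_n\}$ are disjoint so the map is injective, which you fold into a ``direct check'' --- it does hold (since $b_n\in d(a_n)$ forces $b_n\neq a_n$, and $b_n=a_m$ with $m\neq n$ would make $a_m,a_n$ comparable), but it is part of what ``order embedding'' requires and is worth saying.
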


\begin{proof}
In $S_{\omega,2}$, the sequence $\la d(y_n) \mid n < \omega\ra$ is a $\subseteq-$strictly decreasing as witnessed by $\la x_n \mid n < \omega\ra$. Suppose that $\pi : S_{\omega,2} \to S$ is an embedding of $(S_{\omega,2}, <_{S_{\omega,2}})$ into $(S,<_S)$. Then $\pi(x_n) \in d(\pi(y_n))\setminus d(\pi(y_m))$ for every $n < m < \omega$. But $D(\pi(y_n)),D(\pi(y_m))$ are $\subseteq-$comparable as
$(S,<_S)$ does not embed $R_{2,2}$, so it must be that $d(\pi(y_m)) \subsetneq d(\pi(y_n))$. Therefore $\la d(\pi(y_n)) \mid n < \omega\ra$ is $\subseteq-$strictly decreasing.  This shows \ref{22} implies \ref{11}. 
Similarly the fact that in $S_{\omega,2}$, $\la \cu(x_n) \mid n < \omega\ra$ is a $\subseteq$ strictly decreasing sequence, is used to show that \ref{33} implies \ref{11}.\\
Suppose now that $(S,<_S)$ is a well founded order which does not embed $R_{2,2}$ {and fails to satisfy} \ref{22}. Let $\la y_n \mid n <\omega\ra$ be a sequence of distinct elements in $S$ such that $\la d(y_n) \mid n < \omega\ra$ is $\subseteq-$strictly decreasing. Since $S$ is well founded we may assume that $y_m \nless_S y_n$ for every $n < m$. {Furthermore, we cannot have  $y_n <_S y_m$ as it would imply that} $y_n \in d(y_n)$. It follows that the elements in $\la y_n \mid n < \omega\ra$ are $<_S$ pairwise incomparable. \\
Next, for each $n <\omega$ pick $x_n \in d(y_n) \setminus d(y_{n+1})$, thus
$x_n \in d(y_n) \setminus d(y_m)$ for every $n < m$.
We can thin out the sequence $\la x_n \mid n < \omega \ra$ to get an infinite subsequence so that $x_m \nless_S x_n$ {whenever $m > n$ and $x_n,x_m$ are members of the subsequence}.
For simplicity let us assume that $x_m \nless_S x_n$ for every $n < m$. We claim that also $x_n \nless_S x_m$ for every $n < m$. For this, note that $d(y_m),d(x_m)$ are $\subseteq$ compatible and $x_m \in d(y_m) \setminus d(x_m)$, so $d(x_m) \subseteq d(y_m)$. Therefore if $x_n <_S x_m$, $x_n \in d(x_m) \subseteq d(y_m)$ which contradicts our choice of $x_n$. We conclude that the elements in the sequence $\la x_n \mid n < \omega\ra$ are also $<_S$ pairwise incomparable. \\
We claim that $<_S\uhr (\{x_n\}_{n<\omega} \uplus \{y_n\}_{n<\omega})$ is isomorphic to $S_{\omega,2}$. 
It remains to show that the sets $\{x_n\}_{n<\omega}$, $\{y_n\}_{n<\omega}$ are disjoint.
To this end, we have that $x_n \neq y_n$ for all $n < \omega$, as $x_n \in d(y_n)$. Also if $m \neq n$ then $x_n \neq y_m$ as otherwise $y_n,y_m$ would be $<_S$ comparable in contradiction to the above. 
It follows that \ref{11} implies \ref{22}. {Using a similar argument one can show that \ref{11} implies \ref{33}. } 
\end{proof}

\begin{definition}\label{Definition - I - bar D}
Let $(S,<_S)$ be an order.
\begin{enumerate}
\item 
Define $\D(S)$ be the completion of $D(S)$ under $\subseteq$ increasing sequences namely $d \in \D(S) \setminus D(S)$ if and only if $d = \cup C$ for some $C \subset D(S)$ which is $\subsetneq-$downward closed, i.e. 
for all $d_1,d_2 \in D(S)$, if $d_2 \in C$ and $d_1 \subseteq d_2$ then $d_1 \in C$.
  
 \item For every $x \in S$ let 
 \begin{itemize}
 \item $<_{\D}(x) = \{d \in \D(S) \mid d \subsetneq d(x)\}$, and 
 \item $<_{\CU}(x) = \{\cu(y) \in \CU(S) \mid \cu(y) \subsetneq \cu(x)\}$.
 \end{itemize}
\end{enumerate}
\end{definition}

\begin{remarks}\label{Remarks - Tame}${}$
\begin{enumerate}
\item Note that $D(S) \subset \D(S)$. Indeed for every $x \in S$, 
$d(x) = \cup C_x$, where $C_x = \{d \in D(S) \mid d\subseteq d(x)\}$ is
$\subseteq-$downward closed. 

\item The elements $d \in \D(S) \setminus D(S)$ are $\subseteq-$limits of $D(S)$. Therefore if $d \in \D(S)$ has a $\subseteq$ immediate successor $d^+ \in \D(S)$ then $d^+$ is not a $\subseteq-$limit and therefore $d^+ \in D(S)$ is of the form $d^+ = d(z)$ for some $z \in S$.

\item  
If $(S,<_S)$ is a well order which does not embed $R_{2,2}$ nor
$S_{\omega,2}$ then $(D(S),\subseteq)$ and $(\CU(S),\subseteq)$ are well orders. 
{Since $\D(S)$ introduces only $\subseteq-$limits to $D(S)$, it follows that   
$(\D(S),\subseteq)$ is a well order}. 

\item 
For every $x \in S$, the sets $<_{\D}(x)$, $<_{\CU}(x)$ are $\subseteq$ initial segments of $\D(S)$, $\CU(S)$ respectively. In particular
$(<_{\D}(x), \subseteq)$ and $(<_{\CU}(x), \subseteq)$ are well orders.
\end{enumerate}
\end{remarks}

\begin{definition}\label{Definition - Tame Rank}
Let $(S,<_S)$ be a tame order. We define the \emph{tame rank} of $(S,<_S)$ to be
the ordertype of the well-ordered set $(\CU(S),\subset)$, and denote it by 
$\tamerank(S,<_S)$
\end{definition}

It is not difficult to see that for every tame order $(S,<_S)$,
\[ \rank(S,<_S) \leq \tamerank(S,<_S) < |S|^+\]

\begin{proposition}\label{Proposition - tame}
A well-founded reduced order $(S,<_S)$ is tame if and only if it embeds in some $R_\lambda$.
Moreover, $\lambda = {\tamerank(S,<_S)} < |S|^+$ is the minimal embedding ordinal.
\end{proposition}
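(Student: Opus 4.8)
The plan is to prove both directions, with the forward direction (tame $\Rightarrow$ embeds in $R_\lambda$) being the substantial one. For the easy direction, note that each $R_\lambda$ is itself reduced and well-founded, and one checks directly that $R_\lambda$ embeds neither $R_{2,2}$ nor $S_{\omega,2}$: in $R_\lambda$ the sets $u((\alpha,\beta))$ depend only on $\beta$ and form a $\subseteq$-chain (indexed by $\beta$), so by Lemma \ref{Observation - key tame observation} no copy of $R_{2,2}$ embeds; and $(\CU(R_\lambda),\subseteq)$ is well ordered (order type $\lambda$), so by Lemma \ref{Lemma - almost tame} no copy of $S_{\omega,2}$ embeds. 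Since embeddability is transitive, any order embedding in some $R_\lambda$ is tame. It remains to observe that $R_\lambda$ is well-founded and reduced when we want the ``moreover'' to make sense, but the statement only asserts this direction as ``embeds in some $R_\lambda$ $\Rightarrow$ tame,'' so this suffices.

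For the forward direction, let $(S,<_S)$ be a well-founded reduced tame order and set $\lambda = \tamerank(S,<_S) = \otp(\CU(S),\subsetneq)$. By Remark \ref{Remarks - Tame}(3), $(\CU(S),\subseteq)$ is a well order of type $\lambda$; likewise $(\D(S),\subseteq)$ is a well order. I would define the embedding $\pi : S \to R_\lambda$ by sending $x$ to the pair $(\alpha_x, \beta_x)$, where $\beta_x = \otp(<_{\CU}(x), \subsetneq)$ is the rank of $\cu(x)$ in $\CU(S)$ (so $\beta_x < \lambda$ and $\cu(x)$ is the $\beta_x$-th element), and $\alpha_x = \otp(\{\cu(y) \in \CU(S) \mid \cu(y) \subseteq \cu(x) \text{ and } x \notin \cu(y)\}, \subsetneq)$ — that is, $\alpha_x$ measures how far down the $\cu$-chain one must go before $x$ drops out. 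Equivalently $\alpha_x$ can be read off the $\D(S)$ side via $d(x)$; the key point is $\alpha_x \le \beta_x$ since $x \in d(x)$ is impossible but the threshold for $x$ leaving the $\cu$-sets is at or below its own index. This gives a map into $R_\lambda = \{(\alpha,\beta) \mid \alpha \le \beta < \lambda\}$.

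The heart of the argument is verifying that $\pi$ is a well-defined injection that preserves and reflects $<_S$. Order-preservation: if $x <_S z$, then $x \in \cu(z') $ whenever $z <_S z'$ fails \dots more usefully, $x <_S z$ means $z \in u(x)$, equivalently $x \notin \cu(\,\cdot\,)$ drops out strictly below where $z$ does, and one checks this forces $\beta_x < \alpha_z$, giving $\pi(x) <_{R_\lambda} \pi(z)$. Conversely if $\pi(x) <_{R_\lambda} \pi(z)$, i.e. $\beta_x < \alpha_z$, one unwinds the definitions to recover $x <_S z$, using that $\CU(S)$ is linearly ordered (tameness, via Lemma \ref{Observation - key tame observation}) so that $\cu(x)$ and $\cu(z)$ are comparable and the indices genuinely locate $x,z$. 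Injectivity is where \emph{reducedness} enters: if $\pi(x) = \pi(z)$ then $\cu(x) = \cu(z)$ (so $u(x) = u(z)$) and the $\alpha$-coordinates agree, which should force $d(x) = d(z)$; then $x \sim_S z$, so $x = z$. Finally, minimality of $\lambda$: any embedding of $S$ into $R_\mu$ induces an injection of $\CU(S)$ into a linearly ordered set of size $\le \mu$ respecting the order, forcing $\mu \ge \otp(\CU(S)) = \lambda$; and $\tamerank(S,<_S) < |S|^+$ was already noted after Definition \ref{Definition - Tame Rank}.

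\textbf{Main obstacle.} The delicate point is pinning down the two coordinates so that $<_{R_\lambda}$-order (which only compares $\beta_x$ with $\alpha_z$) exactly matches $<_S$; in particular one must ensure the $\alpha$-coordinate, defined from the $\cu$-chain, is the ``right'' threshold — that $x <_S z$ iff every $\cu$-set containing $z$ also contains $x$ with the containment \emph{strict} at the relevant index. This requires carefully exploiting that $D(S)$ and $\CU(S)$ are both well-ordered (so passing to $\D(S)$ is harmless) and that reducedness makes the pair $(d(x),u(x))$ a complete invariant of $x$. I expect the bookkeeping of limit stages in $\D(S)$ versus genuine elements of $D(S)$ (Remark \ref{Remarks - Tame}(2)) to be the fussiest part of making $\alpha_x \le \beta_x$ and the reflection of order both go through cleanly.
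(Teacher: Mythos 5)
Your choice of coordinates differs from the paper's, and the difference matters. Your second coordinate $\beta_x = \otp(<_{\CU}(x),\subseteq)$ matches the paper's $M(x)$, but your first coordinate, once unwound, is $\alpha_x = \otp(\cu``d(x),\subsetneq)$ (the condition $\cu(y)\subseteq\cu(x)$ in your definition is redundant given $y <_S x$ and linearity of $\CU(S)$) — an ordinal read off entirely from the scale $\CU(S)$. The paper instead uses $m(x) = \otp(<_{\D}(x),\subseteq)$, computed on the completed scale $\D(S)$, and then has to spend Claims 1--3 building explicit order-preserving injections between initial segments of $\D(S)$ and $\CU(S)$, fussing over immediate successors and limit points. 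Your definition avoids all of this. The single observation you need — and do not state — is that $\cu``d(x)$ is downward closed in the well-ordered chain $(\CU(S),\subseteq)$: if $\cu(z)\subsetneq\cu(w)$, i.e. $u(z)\supsetneq u(w)$, and $w <_S x$ so $x\in u(w)$, then $x\in u(z)$, i.e. $z <_S x$. With that, both $\cu``d(y)$ and $<_{\CU}(x)$ are initial segments of the same well-order, so $x <_S y \iff \cu(x)\in\cu``d(y) \iff \beta_x < \alpha_y$ falls out directly (one direction uses that $\cu(x)$ determines the $\sim$-class of $u(x)$, not $x$ itself, but that is all that's needed to recover $x <_S y$); $\alpha_x \le \beta_x$ is immediate since $x\nless_S x$; and injectivity follows because $\beta_x = \beta_z$ gives $\cu(x) = \cu(z)$ hence $u(x) = u(z)$, while $\alpha_x = \alpha_z$ gives $\cu``d(x)=\cu``d(z)$ and hence $d(x) = d(z)$ (membership of $w$ in $d(x)$ depends only on $\cu(w)$), and then reducedness applies. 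Your reverse direction and your minimality argument are also fine.

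However, the proposal as written does not actually carry out any of this. Order-preservation and injectivity are deferred to ``one checks'' and ``one unwinds'', and the ``Main obstacle'' paragraph shows you have not committed to your own definition: you worry about bookkeeping of limit stages in $\D(S)$ versus $D(S)$, but $\D(S)$ never appears in your $\alpha_x$ — that anxiety belongs to the paper's $m(x)$, not to yours. If you keep both coordinates on the $\CU(S)$ scale and first prove the downward-closure observation above, the remaining verification is short and clean, and you never need to introduce the completion $\D(S)$ at all.
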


\begin{proof}(\textbf{Proposition \ref{Proposition - tame}})\\
Let $\lambda \in \On$ and $X \subseteq R_\lambda$. The order
$(X,<_{R_\lambda}\uhr X)$ is clearly well-founded and reduced. To show that it does not embed $R_{2,2}$, $S_{\omega,2}$, it is sufficient to check that $R_\lambda$ {does not embed these orders}:

\begin{enumerate}
\item ($R_\lambda$ does not embed $R_{2,2}$)\\
Let $(a_0,b_0),(A_0,B_0),(a_1,b_1),(A_1,B_1)$ be four elements in $R_\lambda$ and suppose that $(a_i,b_i) <_{R_\lambda} (A_i,B_i)$ for $i \in \{0,1\}$ (i.e., they satisfy all $R_{2,2}$ relations). We claim that $<_{R_\lambda}$ must satisfy an additional relation which is not compatible with $R_{2,2}$. Indeed if $i \in \{0,1\}$ satisfies $A_i = \max(A_0,A_1)$ 
then $b_0,b_1 < A_i$. Hence both $(a_0,b_0),(a_1,b_1)$ are $<_{R_\lambda}$ smaller than $(A_i,B_i)$.

\item ($R_\lambda$ does not embed $S_{\omega,2}$)\\
Let $X = \{ (m_n^x,M_n^x) \mid n < \omega\} \cup  \{ (m_n^y,M_n^y) \mid n < \omega\} \subset R_{\lambda}$. Let $\pi : S_{\omega_2} \to R_\lambda$ defined by $\pi(x_n) = (m_n^x,M_n^x)$ and $\pi(y_n) = (m_n^y,M_n^y)$, $n < \omega$. We claim that $\pi$ cannot be an embedding of $(S_{\omega,2},<_{S_{\omega,2}})$ into $(R_{\lambda},<_{R_{\lambda}})$. Otherwise 
setting $m^* = \min(\{m_n^y \mid n < \omega\}) <\lambda$ and $n^* = \min(\{n < \omega \mid m_n^y = m^*\}$, we get that
$\pi(x_{n^*}) <_{R_{\lambda}} \pi(y_{n^*})$,
i.e., $M^x_{n^*} < m^y_{n^*} = m^*$. It follows that for every
$n > n^*$, $M^x_{n^*} < m^* \leq m^y_n$ thus $\pi(x_{n^*}) <_{R_\lambda} \pi(y_n)$. But this is incompatible with $<_{S_{\omega,2}}$. 
\end{enumerate}
\noindent It follows that $(R_\lambda,<_{R_\lambda})$ is tame, and it is easy to see that $\tamerank(R_\lambda) = \lambda$. Therefore if $(S,<_S)$ is tame and embeds in $R_\lambda$ then $\tamerank(S,<_S) \leq \lambda$. \\

\noindent Next, {suppose that $(S,<_S)$ is a reduced well-founded order which does not embed
$R_{2,2}$ nor $S_{\omega,2}$}, and let $\lambda = \tamerank(S,<_S)$.
Define functions, $m,M : S \to \lambda$ by
\[ m(x) = \otp(<_{\D}(x), \subseteq) \quad \text{ and } \quad M(x) = \otp(<_{\CU}(x), \subseteq).\]
We claim that the map $\pi : S \to R_\lambda$, defined by 
$\pi(x) = (m(x),M(x))$, is an embedding of $(S,<_S)$ into $(R_\lambda,<_{R_\lambda})$. \\
$(S,<_S)$ is reduced, therefore for every distinct $x,y \in S$, $(d(x),\cu(x)) \neq (d(y),\cu(y))$. We get that
one of  $<_{\D}(x)$, $<_{\D}(y)$ is a $\subseteq-$strict initial segment of the other, or, one of  $<_{\CU}(x)$, $<_{\CU}(y)$ is a $\subseteq-$strict initial segment of the other. Hence $m(x) = \otp(<_{\D}(x), \subseteq) \neq \otp(<_{\D}(y), \subseteq) = m(y)$, or,
$M(x) = \otp(<_{\CU}(x), \subseteq) \neq \otp(<_{\CU}(y), \subseteq) = M(y)$.
Therefore $\pi$ is injective.\\

\noindent The next three claims show that $\pi$ is order preserving:\\

\noindent\textbf{Claim 1 -}
For every $x \in S$ there exists $\subseteq-$order preserving injection 
$f: <_{\D}(x) \to <_{\CU}(x)$, thus $m(x) \leq M(x)$. 

Let $d \in <_{\D}(x)$ and define $f(d)$ as follows:
Let $d^+ \in \D(S)$ be the $\subseteq$ immediate successor of $d$, i.e., 
$d \subsetneq d^+ \subseteq d(x)$. Pick an element $y_d \in d^+ \setminus d$ 
and set $f(d) = \cu(y_d)$.\\
To show that $f(d)$ belongs to $<_{\CU}(x)$, note that $y_d \in d^+ \subset d(x)$, so 
$y_d <_S x$ and $x \in \cu(x)\setminus \cu(y_d)$. As $\cu(x)$ and $\cu(y_d)$ are $\subseteq$ comparable {it must mean that} $\cu(y_d) \subsetneq \cu(x)$.\\
As pointed out in Remarks \ref{Remarks - Tame}, we have that $d^+ = d(z)$ for some $z \in S$. Therefore $y_d <_S z$. 
Let $d' \in <_{\D}(x)$ so that $d \subsetneq d'$. $d^+ = d(z) \subset d' \subsetneq (d')^+$ implies that $y_{d'} \nless_S z$ and therefore $z \in \cu(y_{d'}) \setminus \cu(y_d)$, so $f(d) = \cu(y_d) \subsetneq \cu(y_{d'}) = f(d')$. Hence $f$ is $\subsetneq-$order preserving.\\

\noindent\textbf{Claim 2 -}
For every $x <_S y$ there is a $\subseteq-$order preserving injection 
$g: <_{\CU}(x) \to <_{\D}(y)$, witnessing that $M(x) < m(y)$.

Let $\cu(z) \in <_{\CU}(x)$ and define $g(\cu(z))$ as follows:
Let $z^+ \in S$ so that $\cu(z^+)$ is the $\subseteq$ immediate successor
of $\cu(z)$. 
Let $\Gamma_{\cu(z)} = \{d(w) \mid w \in \cu(z^+) \setminus \cu(z)\}$ and let $g(\cu(z)) \in \Gamma_{\cu(z)}$  be a $\subseteq$ minimal set in $\Gamma_z$ (it is actually unique). Also pick $w_{\cu(z)} \in \cu(z^+) \setminus \cu(z)$ so that 
$g(\cu(z)) = d(w_{\cu(z)})$. We get that $w_{\cu(z)} \nless_S x$ as $w_{\cu(z)} \in \cu(z^+) \subseteq \cu(x)$, and $z <_S w_{\cu(z)}$ as $w \not\in \cu(z)$. \\
For every $w' \in S$, if $w' <_S w_{\cu(z)}$ then $w' <_S y$ as otherwise $<_S\uhr \{w_{\cu(z)},w',x,y\}$ would be isomorphic to $R_{2,2}$. It follows that
$g(\cu(z)) = d(w_{\cu(z)}) \subseteq d(y)$. Moreover $d(w_{\cu(z)}) \subsetneq d(y)$ since $x \in d(y)\setminus d(w_{\cu(z)})$. This shows that $g(\cu(z)) \in <_{\D}(y)$.\\
Let $z' \in S$ so that $\cu(z) \subsetneq \cu(z') \in <_{\CU}(x)$.
We have $w_{\cu(z)} \in \cu(z^+) \subseteq \cu(z')$, i.e. $z' \nless_S w_{\cu(z)}$ and therefore $z' \in d(w_{\cu(z')}) \setminus d(w_{\cu(z)})$.
It follows that $g(\cu(z)) = d(w_{\cu(z)}) \subsetneq d(w_{\cu(z')}) = g(\cu(z'))$. Therefore $g$ is $\subseteq-$order preserving.\\
Suppose that $\otp(<_{\D}(y),\subseteq) = \rho + n$ where $\rho$ is a limit ordinal
and $n < \omega$. Let $\la d_i \mid i < \rho + n\ra$ be a $\subseteq-$continuous increasing enumeration of $<_{\D}(y)$. {In order  to prove $M(x) < m(y)$ it is sufficient to verify that $d_\rho = \bigcup_{i<\rho}d_i \not\in \rng(g)$}.
We consider the following three cases which address the identity of $d_\rho$:

\begin{enumerate}
 \item If $\rho = 0$ then $d_\rho = \emptyset$, as $\emptyset = d(t) \in <_{D}(x)$ for every $<_S$ minimal element $t <_S x$. We saw that $x \in d(w_{\cu(z)}) = g(\cu(z))$ for every $\cu(z) \in <_{\CU}(x)$, therefore $g(\cu(z)) \neq\emptyset$ for every $\cu(z) \in \dom(g)$.
 
 \item If $d_\rho \in \D(S)\setminus D(S)$ then $d_\rho \not\in \rng(g)$ since $g(\cu(z)) = d(w_{\cu(z)}) \in D(S)$ for every $\cu(z) \in <_{\CU}(x)$.
 
 \item Suppose that $d_\rho = d(w)$ for some $w \in S$.
 Recall that for every $\cu(z) \in <_{\CU}(x)$, $g(\cu(z))$ is  a $\subseteq$ minimal set in $\Gamma_{\cu(z)} = \{d(w) \mid w \in \cu(z^+) \setminus \cu(z)\}$. Therefore to show $d_\rho \not\in \rng(g)$ it is sufficient to verify $d_\rho$ is not $\subseteq-$minimal in $\Gamma_{\cu(z)}$ for any
 $\cu(z) \in <_{\CU}(x)$.
For this, note that $d_\rho = d(w) \in \Gamma_{\cu(z)}$ implies $z \in d_\rho = \bigcup_{i < \rho}d_i$. Let $i < \rho$ be a successor ordinal such that $z \in d_i = d(w_i)$. Since $z^+ \not\in d_\rho$ and $d(w_i) \subseteq d_\rho$, we get that $w_i  \in \cu(z^+) \setminus \cu(z)$,  witnesses that $d_\rho$ is not $\subseteq-$minimal.
  \end{enumerate}

\noindent \textbf{Claim 3 -}
For every $x \nless_S y$ there is a $\subseteq-$preserving injection 
$h:  <_{\D}(y) \to <_{\CU}(x)$, thus $m(y) \leq M(x)$.

Let $d \in <_{\D}(y)$ and define $h(d)$ as follows:
Let $d^+ \in \D(S)$ be the $\subseteq$ immediate successor of $d$, i.e., 
$d \subsetneq d^+ \subseteq d(y)$. Pick an element $w_d \in d^+ \setminus d$ 
and set $h(d) = \cu(w_d)$. Since $\cu(w_D)$, $\cu(y)$ are $\subseteq$ comparable, and $y \in \cu(x)\setminus \cu(w_d)$ (as $x \nless_S y$) we get 
$h(d) = \cu(w_d) \subsetneq \cu(x)$, i.e., $h(d) \in <_{\cu}(x)$. \\
Let $d' \in <_{\D}(y)$ so that $d \subsetneq d'$. $d^+ = d(z)$ for some $z \in S$. 
We have $d(z) \subset d'$ and $w_{d'} \not\in d'$, so $
z \in \cu(w_{d'})\setminus \cu(w_d)$ and thus $h(d) = \cu(w_d) \subsetneq \cu(w_{d'}) = h(d')$. Therefore $h$ is $\subsetneq-$order preserving.
\end{proof}

\begin{remark}${}$
\begin{enumerate}
\item {Definition \ref{Definition - Tame Orders} (of tame orders) is slightly different from the author's original (equivalent) definition, where $S$ is tame, if it does not embed $R_{2,2}$ and the linear ordered sets $(\D(S),\subset)$, $(\CU(S),\subset)$ are well-orders. The author would like to thank the referee for pointing out that the last property is equivalent to the fact that $S$ does not embed $S_{\omega,2}$ as well.}

\item In \cite{Cummings I}, James Cummings constructed a model in which $\mo(\kappa)$ is divided into blocks $\{M(\alpha,\beta) \mid \alpha < o(\kappa) ,\beta \in (\alpha,o(\kappa)) \cup \{\infty\}\}$.
The blocks determine the $\mo$ structure in this model, where for every $U'\in M(\alpha',\beta')$ and $U \in M(\alpha,\beta)$, $U' \mo U$ if and only if 
$\beta' \leq \alpha$. It is not difficult to see that this order is tame.

\end{enumerate}
\end{remark}

\section{The Posets $\pzero$ and $\pone$}\label{Section - I - Posets}

The purpose of this section is to introduce the main poset $\po = \po^0 * \po^1$, comprised of
$\po^0$- a Friedman Magidor forcing, introduced in  \cite{Friedman-Magidor - Normal Measures}, and of
 $\po^1$ - a Magidor iteration of Prikry type forcings. The Magidor iteration of Prikry type forcings
 was introduced by Magidor in \cite{Magidor} (See \cite{Gitik-HB} for an extensive survey). 
 The definitions of $\po^0$ and $\po^1$ rely on certain parameters, chosen relative to $\lambda = o^V(\kappa)$.
 To simplify the presentation we restrict the presentation of $\pzero$,$\pone$ in this section to when $\lambda \leq\kappa$.  The more general case, $\lambda < \kappa^+$, will be treated in section \ref{Section - I - final cut}.\\
 Suppose that the ground model is a  Mitchell model, $V = L[\U]$, so that
 $\U = \la U_{\nu,\tau} \mid \nu \leq \kappa, \tau < o(\alpha)\ra$ is a coherent sequence of normal measures.
 When $\nu = \kappa$ we write $U_{\tau}$ to denote $U_{\kappa,\tau}$, for every $\tau < \lambda$. 
 For every $\tau < \lambda$ let $\Delta_\tau = \{\nu < \kappa \mid o(\nu)= \tau\}$.
 The sets $\{\Delta_\tau \mid \tau < \lambda\}$ are pairwise disjoint as $\lambda \leq \kappa$.
 For every $\alpha < \lambda$, let $j_\alpha : V \to M_\alpha \cong \Ult(V,U_\alpha)$ be the induced
 ultrapower embedding. Therefore $U_\alpha \in M_\beta$ if and only if $\alpha < \beta < \lambda$.
 Let  $j_\alpha^{M_\beta} : M_\beta \to M_{\alpha,\beta} \cong \Ult(M_\beta,U_\alpha)$, and
 $j_{\alpha,\beta} = j_\alpha^{M_\beta} \circ j_\beta : V \to M_{\alpha,\beta}$.
 {$j_{\alpha,\beta}$ is known to be equivalent to the ultrapower embedding induced by the product measure $U_\alpha\times U_\beta$.}
 $j_{\alpha,\beta}$ can also be formed using a normal iteration
 Taking $j_\alpha(U_\beta) \in M_\alpha$, if $i_{\beta}^{M_\alpha} : M_\alpha \to M_{\alpha,\beta} \cong \Ult(M_\alpha,j_\alpha(U_\beta))$ 
 denote the induced ultrapower embedding of $M_\alpha$ by $j_\alpha(U_\beta)$ then
 $j_{\alpha,\beta} = i_\beta^{M_\alpha} \circ j_\alpha$. 
 The iteration is normal since the sequence of critical points $\la \kappa_0, \kappa_1 \ra = \la \kappa, j_\alpha(\kappa)\ra$ is increasing. 
The fact that  $j_\beta(\kappa) > \kappa$ is inaccessible in $M_\beta$ implies that
 $j_{\alpha,\beta}(\kappa) = j_\alpha^{M_\beta}(j_\beta(\kappa)) = j_{\beta}(\kappa)$.
 
\subsection{The Poset $\pzero$}\label{SubSection - I - pzero} 
The forcing $\pzero$ used here was introduced by Friedman and Magidor in \cite{Friedman-Magidor - Normal Measures}.
$\pzero = \pzero_{\kappa+1} = \la \pzero_\nu, \qzero_\nu \mid \nu \leq \kappa\ra$
is a non-stationary support iteration.
Conditions $p \in \pzero_\nu$ are denoted by $p = \la \name{p_\mu} \mid \mu < \nu \ra$.
Non-stationary support means that for every limit $\nu \leq \kappa$, every $p \in \pzero_\nu$ belongs to the inverse limit of the posets $\la \pzero_\mu \mid \mu < \nu\ra$,
with the restriction that if $\nu$ is inaccessible then the set of $\mu<\nu$ such that $p_\mu$
is nontrivial is a non stationary subset of $\nu$.
For every $\nu \leq \kappa$, if $\nu$ is non-inaccessible then $\force_{\pzero_\nu} \qzero_\nu = \emptyset$,
otherwise $\force_{\pzero_\nu} \qzero_\nu = \Sacks_{\lambda(\nu)}(\nu)*\Code(\nu)$,
where for every $\nu < \kappa$ we set 
\[\lambda(\nu) = 
\begin{cases}
  \lambda &\mbox{if }  \lambda < \kappa,  \\
  \nu &\mbox{if } \lambda = \kappa.
\end{cases}\]
Conditions in $\Sacks_{\nu}(\nu)$ are trees $T \subset {}^{<\nu}\nu$ for which there is a
closed unbounded $C \subset \nu$ so that whenever $s \in T$, if $\len(s) \in C$
then $s \fr \la i \ra \in T$ for all $i < \lambda(\len(s))$. 
$\Code(\nu)$ a coding posets which adds a club set to $\nu^+$ coding the $\Sacks_{\lambda(\nu)}(\nu)$ generic
function $s_\nu : \nu \to \nu$ by destroying stationary subsets of $\Cf(\nu) \cap \nu^+$ in $V$ 
{(see \cite{Friedman-Magidor - Normal Measures}.}
Let $\Gzero \subset \pzero$ be a generic filter over $V$, and for every $\nu \leq \kappa$ let
$\Gzero(\qzero_\nu) = \bigcup\{ \name{p_\nu}_{G^0\uhr \nu} \mid p \in \Gzero\}$ be the induced $\qzero_\nu-$generic over $V[\Gzero\uhr\nu]$. 
For every non trivial stage $\nu \leq \kappa$ in the iteration $\pzero$, let $s_\nu : \nu \to \lambda(\nu)$ denote
the generic Sacks function specified by $\Gzero(\qzero_\nu)$. 
For every $\eta < \lambda$ define 
\[ \delset(\eta) = \{\nu < \kappa \mid s_\nu = s_\kappa\uhr\nu, \text{ and } \thinspace s_\kappa(\nu) = \eta\}. \]
The sets in $\{\Delta(\eta) \mid \eta < \lambda\}$ are clearly pairwise disjoint.
According to Friedman and Magidor \cite{Friedman-Magidor - Normal Measures} $V[G^0]$ satisfies the following properties:
\begin{fact}\label{Fact - I - Friedman Magidor}
\begin{enumerate}
\item $V[G^0]$ agrees with $V$ on all cardinals and cofinalities.
\item {For every normal measure $U$ on $\kappa$ and $\eta < \lambda$, 
there is a unique normal measure, $U(\eta) \in V[G^0]$ containing $U \cup \{\Delta(\eta)\}$. Furthermore, 
these are the only normal measures on $\kappa$ in $V[G^0]$.}

\item For every $\eta < \lambda$, let 
$j_{U(\eta)} : V[G^0] \to M_{U(\eta)}^0 \cong \Ult(V[G^0],U(\eta))$ be the induced ultrapower
embedding of $V[G^0]$ by $U(\eta)$. We have
\begin{enumerate}
 \item $j_{U(\eta)}\uhr V = j_U : V \to M_U \cong \Ult(V,U)$ is the induced ultrapower
 embedding of $V$ by $U$.
 \item $M_{U(\eta)}^0 = M_U[G^0_{U(\eta)}]$ where $G^0_{U(\eta)} \subset j_U(\po^0)$ is
 $M_U$ generic.
 \item $G^0_{U(\eta)}\uhr \kappa+1 = \Gzero$.
 \item $\bigcup j_U`` (G^0\uhr\kappa)$ completely determines $G^0_{U(\eta)}\uhr j_U(\kappa) \subset j_U(\po^0)\uhr j_U(\kappa)$.
 In particular $G^0_{U(\eta)}\uhr j_U(\kappa)$ is independent of $\eta < \lambda$. 
 \item For every $p \in \Gzero$ let $j_U(p)(\eta)$ be the condition obtained
 from $j_U(p)$ by reducing its Sacks tree at level $\kappa$, $T = (j_U(p))_\kappa$, to the set of functions 
 $s \in Lev_{\kappa+1}(T)$ which satisfy $s(\kappa) = \eta$.
 For every dense open set $D \subset j_U(\pzero)$, there exists  $p \in \Gzero$ so that
 $j_U(p)(\eta) \in D$.
\end{enumerate}
\end{enumerate}
\end{fact}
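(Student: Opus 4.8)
\noindent\emph{Proof plan.} This statement is quoted from \cite{Friedman-Magidor - Normal Measures}, so the plan is to recall the shape of that argument rather than to reprove every clause. Clause (1) is the routine closure-and-chain-condition analysis of the non-stationary-support iteration $\pzero$ (an Easton-type iteration with Sacks fusion at the inaccessible stages): at each inaccessible $\nu\le\kappa$ the iteration factors as an initial segment, followed by the $<\nu$-closed forcing $\qzero_\nu$ (which carries a $\nu^+$-fusion property), followed by a $<\nu^+$-closed tail, and the usual bookkeeping shows that no cardinal is collapsed and no cofinality changed; I would quote this directly from \cite{Friedman-Magidor - Normal Measures}.

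The heart of the matter is a lifting argument producing (2) and (3) together. Fix a normal measure $U$ on $\kappa$ in $V$ with ultrapower $j_U : V \to M_U\cong\Ult(V,U)$. Then $\kappa$ is the critical point of $j_U$, the map $j_U$ fixes $\pzero_\kappa\subseteq V_\kappa$ pointwise, and in $M_U$ the poset $j_U(\pzero)$ is an iteration of the same kind, of length $j_U(\kappa)+1$, agreeing with $\pzero$ on coordinates $\le\kappa$. For each $\eta<\lambda$ I would build an $M_U$-generic $G^0_{U(\eta)}\subseteq j_U(\pzero)$ as follows: on coordinates $\le\kappa$ place the given generic $\Gzero$ (which is $V$-generic, hence $M_U$-generic, for the corresponding initial segment); on the open interval $(\kappa,j_U(\kappa))$ take the filter generated by the $j_U$-image of $\Gzero\uhr\kappa$, using that $\Code$ is designed so that the generic Sacks functions on this interval are definable from the continuum pattern and hence forced to equal that canonical image; and on the top coordinate $j_U(\kappa)$ take a $(\qzero_{j_U(\kappa)})^{M_U}$-generic whose generic Sacks function $s_{j_U(\kappa)}$ satisfies $s_{j_U(\kappa)}\uhr\kappa=s_\kappa$ and $s_{j_U(\kappa)}(\kappa)=\eta$, together with the standard lift of the coding generic for the $\Code$ factor. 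Genericity of the top block is precisely clause (3)(e): one shows that every dense open $D\subseteq j_U(\pzero)$ in $M_U$ is met by some $j_U(p)(\eta)$ with $p\in\Gzero$, by writing $D=j_U(F)(\kappa)$ for some $F\in V$, observing that $F(\nu)$ is dense open in the appropriate $\nu$-th approximation to $\pzero$ for $U$-almost-every $\nu$, and running a reflection-of-density argument; the coherence built into $\Code$ is what makes this work, since it forces the behaviour of $j_U(p)(\eta)$ at $\kappa$ to be governed by that of $p\uhr\kappa$ at the reflecting $\nu$'s. Clause (3)(c) then holds by construction, clause (3)(d) is visible from the construction below $j_U(\kappa)$ (which never referred to $\eta$), and clauses (3)(a),(3)(b) are immediate for the resulting lift $j_U : V[\Gzero]\to M_U[G^0_{U(\eta)}]$. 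Finally, $s_{j_U(\kappa)}\uhr\kappa=s_\kappa$ and $s_{j_U(\kappa)}(\kappa)=\eta$ unwind, at $j_U(\kappa)$, to $\kappa\in j_U(\Delta(\eta))$; hence $U(\eta):=\{X\subseteq\kappa\mid\kappa\in j_U(X)\}$ is a normal measure on $\kappa$ in $V[\Gzero]$ with $U\cup\{\Delta(\eta)\}\subseteq U(\eta)$, and varying $\eta$ yields the $\lambda$ required measures.

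The remaining direction --- that $\{U(\eta)\mid\eta<\lambda\}$ exhausts the normal measures on $\kappa$ extending $U$, and that $U(\eta)$ is unique for each pair $(U,\eta)$ --- is, I expect, the main obstacle. Given an arbitrary normal measure $W$ on $\kappa$ in $V[\Gzero]$ with ultrapower $j_W : V[\Gzero]\to M_W$, I would first show that $U:=\{X\in\mathcal{P}(\kappa)^V\mid\kappa\in j_W(X)\}$ lies in $V=\K(V)$ and is a normal measure there; here $\Code$ is again the tool, since the relevant ground-model ultrafilter data is reconstructible inside $M_W$ from the continuum-pattern part of $\Gzero$, and a condensation/reflection argument then places it in $V$. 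Next, $j_W\uhr V$ is the ultrapower of $V$ by $U$, so $M_W=M_U[G^0_W]$ for some $M_U$-generic $G^0_W$ extending $\Gzero$; by the definability forced by $\Code$, $G^0_W\uhr j_U(\kappa)$ equals the canonical candidate from the construction, and the top Sacks function of $G^0_W$ takes some value $\eta<\lambda$ at $\kappa$, which unwinds to $\Delta(\eta)\in W$. Then $W$, recovered from $j_W$, must coincide with $U(\eta)$. The genuinely delicate points are that $W\cap V$ really lands in $V=\K(V)$ and is the ultrafilter of an \emph{ultrapower} (rather than of a longer iteration, or a $V$-ultrafilter lying outside $V$), and that $W$ is controlled on the \emph{new} subsets of $\kappa$ added by the top Sacks block, not merely on the ground-model sets; it is precisely for this rigidity that $\Code$ is inserted into $\qzero_\nu$, forcing every lift of every $j_U$ to be determined up to the single ordinal $s_{j_U(\kappa)}(\kappa)\in\lambda$. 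Carrying out this rigidity analysis --- and, relatedly, the reflection argument behind clause (3)(e) --- is the technical core of \cite{Friedman-Magidor - Normal Measures}, which I would follow for the details.
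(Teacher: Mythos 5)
The paper offers no proof of this Fact: it is stated as a direct quotation from Friedman--Magidor \cite{Friedman-Magidor - Normal Measures}, which is exactly the stance you take, and your sketch of the lifting argument (clause (3)(e) via a reflection-of-density argument through $j_U(F)(\kappa)$, the tuning fork at level $\kappa$ of the top Sacks block giving $U(\eta)$, and the core-model rigidity argument for exhaustiveness/uniqueness) is a faithful reconstruction of that source. One small correction worth noting: $\Code(\nu)$ codes the Sacks generic $s_\nu$ by destroying stationary subsets of $\nu^+\cap\Cf(\nu)$, so the rigidity comes from the stationary/non-stationary pattern on $\nu^+$, not from the ``continuum pattern''; the structural role you assign to $\Code$ is otherwise right.
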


\begin{definition}\label{Definition - Uz alpha beta and jz alpha beta}
\begin{enumerate}
 \item Let $\Uz{\alpha}{\eta}$ denote the $V[\Gzero]$ extension $U(\eta)$ described above, for $U = U_\alpha$.
 \item Let $\jz{\alpha}{\eta} : V[\Gzero] \to M_\alpha[\Gzero_\alpha(\eta)]$ denote the ultrapower
 embedding of $V[\Gzero]$ by $\Uz{\alpha}{\eta}$. 
\end{enumerate}
\end{definition}
 We write  $\Gzero_\alpha(\eta) = \jz{\alpha}{\eta}(\Gzero)$ to 
 denote the $j_\alpha(\pzero)$ generic filter $\Gzero_{U(\eta)}$ described above with $U = U_\alpha$. 
 It is clear that $\jz{\alpha}{\eta}\uhr V = j_\alpha$. 
 
 \begin{definition}\label{Definition - Delta_alpha sets}${}$
 \begin{enumerate}
 \item  For every $\alpha < o(\kappa)$ let $\Delta_\alpha = \{\nu < \kappa \mid o(\nu) = \alpha\}$.
 \item For $\alpha < o(\kappa)$ and $\eta < \lambda$ let  let $\delset_\alpha(\eta) = \delset(\eta) \cap \delset_\alpha$.
 \end{enumerate}
 \end{definition}

\noindent Based on the above definition, it immediately follows that:
\begin{enumerate}
 \item $\{\delset_\alpha(\eta) \mid \alpha,\eta < \lambda\}$ are pairwise disjoint. 
 \item $\delset_{\alpha}(\eta) \in \Uz{\alpha}{\eta}$ for every $\alpha,\eta < \lambda$. 
\end{enumerate}

\noindent 
We can easily describe the iterated ultrapowers using $\Uz{\alpha}{\eta}$, $\alpha,\eta < \lambda$.
According to \cite{Friedman-Magidor - Normal Measures},
$j_{\alpha,\beta}``\Gzero$ determines a unique generic filter in $j_{\alpha,\beta}(\pzero)$ up
to the following values:
\begin{enumerate}
\item  A tuning fork at the value $s_{j_\alpha(\kappa)}(\kappa) < \lambda$, and
\item A tuning fork at the value $s_{j_{\alpha,\beta}(\kappa)}(j_\alpha(\kappa)) < j_\alpha(\lambda)$. 
\end{enumerate}
For every $\eta_\alpha < \lambda$ and $\eta_\beta < j_\alpha(\lambda)$ there exists a unique extension of $j_{\alpha,\beta}$ to an embedding of $V[\Gzero]$, which determines $s_{j_\alpha(\kappa)}(\kappa)  = \eta_\alpha$ and
$s_{j_{\alpha,\beta}(\kappa)}(j_\alpha(\kappa)) = \eta_\beta$.
In this paper we are only interested in values $\eta_\alpha,\eta_\beta < \lambda$.

\begin{notation}\label{Notations - j0 embeddings}${}$
 \begin{enumerate}
 \item Denote $V[\Gzero]$ by $V^0$.
 \item For every $\alpha < o(\kappa) = \lambda$ and $\eta < \lambda$, let
$\jz{\alpha}{\eta} : V^0 \to M^0_{(\alpha,\eta)}$ 
denote the ultrapower induced embedding 
 $\jz{\alpha}{\eta} : V[\Gzero] \to M_\alpha[\Gzero_\alpha(\eta)] \cong \Ult(V^0,\Uz{\alpha}{\beta})$.
 \item 
 For every $\eta_\alpha,\eta_\beta < \lambda$, let 
  \[\iz{\alpha}{\eta_\alpha}{\beta}{\eta_\beta} : 
  M^0_{(\alpha,\eta_\alpha)} \to M^0_{(\alpha,\eta_\alpha),(\beta,\eta_\beta)}
  \cong \Ult \left(M^0_{(\alpha,\eta_\alpha)},
  \jz{\alpha}{\eta_\alpha}(\Uz{\beta}{\eta_\beta}) \right)\]
  denote the ultrapower of $M^0_{(\alpha,\eta_\alpha)}$ by $\jz{\alpha}{\eta_\alpha}(\Uz{\beta}{\eta_\beta})$, and let
 \[\jdz{\alpha}{\eta_\alpha}{\beta}{\eta_\beta} = \iz{\alpha}{\eta_\alpha}{\beta}{\eta_\beta} \circ \jz{\alpha}{\eta_\alpha} : V[\Gzero] \to  M^0_{(\alpha,\eta_\alpha),(\beta,\eta_\beta)}\]
\end{enumerate}
\end{notation}

\noindent The following summarizes the connections between iterated ultrapowers of $V$ and $V^0$:
 \begin{enumerate}
  \item  $\jz{\alpha}{\eta_\alpha} \uhr V = j_\alpha$,  
  \item $\iz{\alpha}{\eta_\alpha}{\beta}{\eta_\beta} \uhr M_\alpha = i_\beta^{M_\alpha}$, 
  \item  $\jdz{\alpha}{\eta_\alpha}{\beta}{\eta_\beta} \uhr V = j_{\alpha,\beta}$,
  \item $s^{ M^0_{(\alpha,\eta_\alpha),(\beta,\eta_\beta)}}_{j_\alpha(\kappa)}(\kappa) 
  = \eta_\alpha $,  \footnote{Here $s_\nu(\mu)$ denotes the value of the generic $\nu-$Sacks at $\mu$}
  \item $s^{ M^0_{(\alpha,\eta_\alpha),(\beta,\eta_\beta)}}_{j_{\alpha,\beta}(\kappa)}(j_\alpha(\kappa)) = \eta_\beta$. 
  \end{enumerate}
  
\noindent
Suppose that $\alpha < o(\kappa) = \lambda$ and $\eta < \lambda$.
The definition of $\Uz{\alpha}{\eta}$ requires the knowledge of 
$U_\alpha$ and $\Gzero$. For every $\beta > \alpha$ and $\eta' < \lambda$, 
   $U_\alpha \in M_\beta \subset M^0_{(\beta,\eta')}$ and 
  $\Gzero = \Gzero_{\beta}(\eta') \uhr (\kappa+1) \in M^0_{(\beta,\eta')}$. We conclude the following:
  \begin{corollary}\label{Corollary - I - mo of U0}
  For every $\alpha < \beta < o(\kappa)$ and $\eta,\eta' < \lambda$, 
   $\Uz{\alpha}{\eta} \mo \Uz{\beta}{\eta'}$\footnote{It is not difficult to verify that $\Uz{\alpha}{\eta} \mo \Uz{\beta}{\eta'}$ if and only if $\alpha < \beta$. }.
  \end{corollary}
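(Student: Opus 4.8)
The plan is to unwind the Mitchell order and then locate $\Uz{\alpha}{\eta}$ inside the ultrapower $\Mz{\beta}{\eta'}\cong\Ult(V^0,\Uz{\beta}{\eta'})$: by definition, $\Uz{\alpha}{\eta}\mo\Uz{\beta}{\eta'}$ asserts precisely that $\Uz{\alpha}{\eta}$, an object of $V^0$, is an \emph{element} of $\Mz{\beta}{\eta'}$. First I would record two preliminary observations. Since $\jz{\beta}{\eta'}$ has critical point $\kappa$ and $\Mz{\beta}{\eta'}$ is closed under $\kappa$-sequences from $V^0$, we get $\power(\kappa)^{\Mz{\beta}{\eta'}}=\power(\kappa)^{V^0}$, and the predicate ``$W$ is a normal measure on $\kappa$'' is absolute between $V^0$ and $\Mz{\beta}{\eta'}$. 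This absoluteness is not by itself enough, since a normal measure is an element of $\power(\power(\kappa))$ and this set may be strictly smaller in $\Mz{\beta}{\eta'}$; the real task is to \emph{produce} $\Uz{\alpha}{\eta}$ as a member of $\Mz{\beta}{\eta'}$.

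For that I would use the definability of $\Uz{\alpha}{\eta}$ supplied by Fact \ref{Fact - I - Friedman Magidor}: by part (2), $\Uz{\alpha}{\eta}=U_\alpha(\eta)$ is the unique normal measure on $\kappa$ in $V^0$ containing $U_\alpha\cup\{\delset(\eta)\}$, where $\delset(\eta)$ is itself read off from the generic Sacks functions coded into $\Gzero$; equivalently, by part (3), $\Uz{\alpha}{\eta}=\{X\in\power(\kappa)^{V^0}\mid\kappa\in j(X)\}$, where $j$ is the canonical lift of $j_\alpha\colon V\to M_\alpha$ to an embedding $V^0\to M_\alpha[\Gzero_\alpha(\eta)]$, with the generic $\Gzero_\alpha(\eta)$ reconstructed from the pointwise image of $\Gzero\uhr\kappa$ under $j_\alpha$ and from the tuning-fork value $\eta$. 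Both parameters of this definition lie in $\Mz{\beta}{\eta'}$ whenever $\alpha<\beta$: since $\U$ is coherent, $U_\alpha\in M_\beta\subseteq\Mz{\beta}{\eta'}$; and by Fact \ref{Fact - I - Friedman Magidor} applied with $U_\beta$ in place of $U$, together with Notation \ref{Notations - j0 embeddings}, $\Gzero=\Gzero_\beta(\eta')\uhr(\kappa+1)$ is a restriction of $\Gzero_\beta(\eta')=\jz{\beta}{\eta'}(\Gzero)\in\Mz{\beta}{\eta'}$, so $\Gzero\in\Mz{\beta}{\eta'}$, and hence $\delset(\eta)\in\Mz{\beta}{\eta'}$ as well. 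Running the Friedman--Magidor construction \emph{inside} $\Mz{\beta}{\eta'}$ from these parameters then yields an object of $\Mz{\beta}{\eta'}$ which, by the absoluteness recorded above and by the uniqueness clause of part (2), must coincide with $\Uz{\alpha}{\eta}$; therefore $\Uz{\alpha}{\eta}\in\Mz{\beta}{\eta'}$, i.e.\ $\Uz{\alpha}{\eta}\mo\Uz{\beta}{\eta'}$.

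The step I expect to be the main obstacle is exactly this last one --- carrying out the Friedman--Magidor construction internally to $\Mz{\beta}{\eta'}$: one must check that $\Mz{\beta}{\eta'}$ computes the posets $\pzero$ and $j_\alpha(\pzero)$ exactly as $V^0$ does, regards $\Gzero$ as $\pzero\uhr(\kappa+1)$-generic over the inner model used to form the lift, and supports the master-condition and lifting arguments underlying Fact \ref{Fact - I - Friedman Magidor} verbatim. This is where one uses that $\Mz{\beta}{\eta'}$ has the form $M_\beta[\text{generic}]$ with $M_\beta$ closed under $\kappa$-sequences, and that the tail of $j_\beta(\pzero)$ above $\kappa$ is closed enough to add no new subsets of $\power(\kappa)$. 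Finally, the converse asserted in the footnote is then immediate: if $\Uz{\alpha}{\eta}\mo\Uz{\beta}{\eta'}$ then $\Uz{\alpha}{\eta}\in\Mz{\beta}{\eta'}$, so $U_\alpha=\Uz{\alpha}{\eta}\cap V\in\Mz{\beta}{\eta'}$; the same tail-closure puts $U_\alpha\in M_\beta$, and coherence of $\U$ forces $\alpha<\beta$.
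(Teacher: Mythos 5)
Your argument matches the paper's: the paragraph immediately preceding the corollary makes exactly this observation, namely that $\Uz{\alpha}{\eta}$ is determined by $U_\alpha$ and $\Gzero$, and that both parameters lie in $\Mz{\beta}{\eta'}$ when $\alpha<\beta$ (since $U_\alpha\in M_\beta$ by coherence and $\Gzero=\Gzero_\beta(\eta')\uhr(\kappa+1)$). You have filled in the supporting details that the paper leaves implicit --- the agreement of $\power(\kappa)$ between $V^0$ and $\Mz{\beta}{\eta'}$, the appeal to the uniqueness clause of Fact~\ref{Fact - I - Friedman Magidor}(2), and the converse in the footnote --- so the reasoning is the same, just spelled out.
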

  
\noindent The description of the normal measures on $\kappa$ in a $\pzero$ extension applies to all measurable cardinals $\nu < \kappa$:
Let  $\vec{U_\nu} = \la U_{\nu,\alpha} \mid \alpha < o(\nu)\ra$ be an $\mo$-increasing sequence of the normal measures on $\nu$ in the coherent sequence $\U \in V = L[\U]$.
Each normal measure $U_{\nu,\alpha} \in V$ extends to $\lambda(\nu)$ 
normal measures in $V[\Gzero\uhr (\nu + 1)]$, denoted $\{U^0_{\nu,(\alpha,\eta)} \mid \eta < \lambda(\nu)\}$.
No measures on $\nu$ are added or removed by the rest of the iteration since
$\pzero\setminus (\nu + 2)$ is $2^{(2^\nu)}-$distributive. Furthermore, we get that $U^0_{\nu,(\alpha,\eta)} \mo U^0_{\nu,(\beta,\eta')}$ for every $\alpha < \beta < o(\nu)$ and $\eta,\eta' < \lambda(\nu)$.

\subsection{The Poset $\pone$}\label{SubSection - I - pone} 

The poset 
$\pone =  \la \pone_\nu , \name{\qone_\nu} \mid \nu < \kappa\ra$ is a Magidor iteration of one-point Prikry forcings.
See \cite{Gitik-HB} for a comprehensive survey of Magidor iteration of Prikry type forcings.
One-point Prikry forcing is a simplified version of the well known Prikry forcing. The one-point version at a measurable cardinal $\nu$  chooses a single (indiscernible) ordinal $d(\nu) < \nu$, instead of a cofinal $\omega$ sequence.\\
Let $U$ be a normal measure on $\nu$.
The one-point Prikry forcing $Q(U)$ consists of elements $p \in U \cup \nu$.
For every $q,q' \in Q(U)$ we set,
\begin{enumerate}
\item $q \geq^*_{Q(U_\nu)} q'$ (i.e., $q$ is a direct extension of $q'$) if and only if 
either $q, q' \in U$ and $q\subset q'$, or $q,q' \in \nu$ and $q = q'$. 
\item $q \geq_{Q(U_\nu)} q'$ if and only if $q \geq^*_{Q(U_\nu)} q'$ or 
$q \in \nu$, $q' \in U$, and $q \in q'$.
\end{enumerate}
$(Q(U_\nu),\geq_{Q(U_\nu)}, \geq^*_{Q(U_\nu)})$ is a Prikry type forcing notion (\cite{Gitik-HB}).
\\

\noindent Let us describe the way $\pone$ is formed from certain one-point Prikry forcings. Let $\nu \leq \kappa$ be a measurable cardinal in $V$. Suppose
$\pone_\nu$ has been defined and $\vec{U}^1_\nu = \la U^1_{\nu,(\alpha,\beta)} \mid \beta < o(\nu), \alpha < \lambda(\nu)\ra$ is a given sequence of normal measures on $\nu$ in a $\pone_\nu$ generic extension of $V^0$. 

\begin{definition}[recipe for $\qone_\nu$]\label{Definition - recipe for Qone}
Let $\alpha,\beta < \lambda$ be the unique ordinals so that $\nu \in \Delta_\alpha(\beta)$ (i.e., 
$o(\nu) = \alpha$ and $s_\kappa(\nu) = \beta$). Define $\qone_\nu$ by 
\[\qone_\nu = 
\begin{cases}
    Q(U^1_{\nu,(\alpha,\beta)}) &\mbox{if }  \beta < \alpha \\
  0 - \text{the trivial forcing}  &\mbox{otherwise}.
\end{cases}\]
\end{definition}
${}$\\
\noindent {To complete the definition of $\qone_\nu$, we need to define the normal measures in $\vec{U}^1_\nu = \la U^1_{\nu,(\alpha,\beta)} \mid \beta < o(\nu), \alpha < \lambda(\nu)\ra$.
These are given in Definitions \ref{Definition - I - U^* single ultrapower} and
\ref{Definition - I - k alpha beta embedding}. Note, however,} that Definition \ref{Definition - I - U^* single ultrapower} is the only one which applies to $\qone_\nu$.

To simplify the notations, let us assume that $\nu = \kappa$ and use the abbreviations $\Uz{\alpha}{\beta}$ for $U^0_{\kappa,(\alpha,\beta)}$, and 
$\Uo{\alpha}{\beta}$ for $U^1_{\kappa,(\alpha,\beta)}$. {Our definitions make use of the embeddings} $\jz{\alpha}{\eta}$, $\iz{\alpha}{\eta_\alpha}{\beta}{\eta_\beta}$, and $\jdz{\alpha}{\eta_\alpha}{\beta}{\eta_\beta}$, introduced in \ref{Notations - j0 embeddings} above. \\

\begin{definition}[Prikry function]${}$
\begin{enumerate}
 \item Let $\delset' = \{ \nu \in \delset \mid 0_{\pone_\nu} \force \qone_\nu \text{ is not trivial }\}.$
 \item Let $\name{d} : \delset' \to \kappa$, be the $\pone$ name for the \emph{generic Prikry function}, so that 
      for every $V^0$ generic filter $\Gone \subset \pone$, $d(\nu) < \nu$ is the $\qone_\nu$ generic
      point given by $\Gone$. 
\end{enumerate}
\end{definition}
${}$

Let $\Gone \subset \pone$ be generic over $V^0$.
\begin{definition}[$\Uo{\alpha}{\beta}$ for $\alpha \geq \beta$]\label{Definition - I - U^* single ultrapower}${}$\\
Suppose that $\alpha \geq \beta$, where $\beta < o(\kappa)$ and $\alpha < \lambda(\kappa) = \lambda$. 
Let $X$ be a subset of $\kappa$ in $V^0[\Gone]$ and $\name{X}$ be a $\pone$ name of $X$.  
Set $X \in \Uo{\alpha}{\beta}$ if and only if there are 
$p \in \Gone$ and $q \geq^* \jz{\beta}{\alpha}(p) \setminus \kappa$ so that 
$p \fr q \geq^* \jz{\beta}{\alpha}(p)$ is a condition in 
$\jz{\beta}{\alpha}(\pone)$ and
\begin{equation}
p \fr q \force_{\jz{\beta}{\alpha}(\pone)} \can{\kappa} \in \jz{\beta}{\alpha}(\name{X}).
\end{equation}
\end{definition}
 \noindent 
 Definition \ref{Definition - I - U^* single ultrapower} implies that $\Uo{\alpha}{\beta}$ extends $\Uz{\beta}{\alpha}$ in $V^0$. In particular $\delset_{\beta}(\alpha) \in \Uo{\alpha}{\beta}$. 
 It is not difficult to verify that $\Uo{\alpha}{\beta}$ is a
 normal measure on $\kappa$. For proof see \cite{Gitik-HB} or
 the description of $U_0^*$ in \cite{OBN - Magidor Iteration}\footnote{to verify the normality of $\Uo{\alpha}{\beta}$, note that $\kappa \in \jz{\beta}{\alpha}(\delset_\beta(\alpha))$,
 so by Definition \ref{Definition - recipe for Qone}, stage $\kappa$ of $\jz{\beta}{\alpha}(\pone)$ is trivial}.

\begin{remark}\label{Remark p- for U^* single ultrapower}
Suppose that $X \in \Uo{\alpha}{\beta}$, $\alpha \geq \beta$, 
and let $p \in \Gone$ and $q \geq^* \jz{\beta}{\alpha}(p) \setminus \kappa$
{as in Definition \ref{Definition - I - U^* single ultrapower} above.
Essentially, one can assimilate $q$ into $\jz{\beta}{\alpha}(p)$, and use it to produce a simpler characterization for the sets in $\Uo{\alpha}{\beta}$:}
Let $Q$ be a function representing $q$ in $\Mz{\beta}{\alpha}$ where $Q(\alpha) \geq^* p \setminus \alpha$ for every $\alpha < \kappa$. 
Let $t \geq^* p$ be the condition obtained from $p$ be reducing each $p_\nu$, $\nu \not\in \supp(p)$, to $t_\nu = p_\nu \cap \Delta_{\alpha < \nu}Q(\alpha)_\nu$. It follows that for every $\alpha < \kappa$, $t^{-\alpha} \geq^* Q(\alpha)$, where $t^{-\alpha}$ is the condition obtained from $t$ by replacing each set $t_\nu$, $\nu > \alpha$, with $t_\nu \setminus \alpha+1$. \\
By a standard density argument it follows that for every $X \in \Uo{\alpha}{\beta}$, $\alpha \geq \beta$, there is some $p \in \Gone$ so that
\[\jz{\beta}{\alpha}(p)^{-\kappa} \force \can{\kappa} \in \jz{\beta}{\alpha}(\name{X}).\]
\end{remark}

\noindent We proceed to define $\Uo{\alpha}{\beta}$ when $\alpha < \beta$. We first introduce the following auxiliary definitions.  
\begin{definition}[$k^0_{\alpha,\beta}$ and $p^{+(\mu,\nu)}$]\label{Definition - I - k alpha beta embedding}${}$ 
\begin{enumerate}
 \item For $\alpha < \beta < \lambda = o(\kappa)$, let $k^0_{\alpha,\beta} : V^0 \to N^0_{\alpha,\beta}$ denote the iterated ultrapower
$\jdz{\alpha}{\beta}{\beta}{\alpha} : V^0 \to  M^0_{(\alpha,\beta),(\beta,\alpha)}$ (introduced in \ref{Notations - j0 embeddings}).

 \item For every condition $p \in \pone$, $\nu < \kappa$ so that $p\uhr \nu \force \name{p_\nu} \in Q(\name{U^*_\nu})$, and
 $\mu < \nu$, let $p^{+(\mu,\nu)}$ denote the condition obtained from $\name{p}$ by replacing $\name{p_\nu}$ with $\can{\{\mu\}}$, 
 i.e., $p^{+(\mu,\nu)} \force \can{\mu} = \name{d}(\can{\nu})$. 
\end{enumerate}
\end{definition}

\noindent Note that $p^{+(\mu,\nu)}$ is not necessarily an extension of $p$. If $p\uhr\nu \force \can{\mu} \in \name{p_\nu}$ then
$p^{+(\mu,\nu)}$ is an extension of $p$. 
\begin{definition}[$\Uo{\alpha}{\beta}$ for $\alpha < \beta$]\label{Definition - I - U^* iterated ultrapower}${}$\\
Let $\alpha < \beta < \lambda$. In $V^0[\Gone]$ define $\Uo{\alpha}{\beta}$ to be the set of all $X = \name{X}_{\Gone} \subseteq \kappa$ for which there are $p \in \Gone$ and $q \geq^*  k^0_{\alpha,\beta}(p)\setminus \kappa$ such that
 $(p \fr q)^{+(\kappa,\jz{\alpha}{\beta}(\kappa))} \geq  p \fr q$, and 
\begin{equation}
(p \fr q)^{+(\kappa,\jz{\alpha}{\beta}(\kappa))} \force \can{\kappa} \in k^0_{\alpha,\beta}(\name{X}).
\end{equation}
\end{definition}
\noindent For proof that $\Uo{\alpha}{\beta}$ is a normal measure on $\kappa$, see \cite{OBN - Magidor Iteration}\footnote{see the proof for the normality of $U_1^{\times}$.}. It follows that $\Uo{\alpha}{\beta}$ extends $\Uz{\alpha}{\beta}$ and in particular $\delset_{\alpha}(\beta) \in \Uo{\alpha}{\beta}$.

\begin{remark}\label{Remark p+- for U^* iterated ultrapower}
Similar to Remark \ref{Remark p- for U^* single ultrapower}, one can 
show that for every $X \in \Uo{\alpha}{\beta}$ there exists a condition $p \in \Gone$ so that
\[
k^0_{\alpha,\beta}(p)^{+\left( \kappa, \jz{\beta}{\gamma}(\kappa) \right)-\kappa- \jz{\beta}{\gamma}(\kappa)}\force \can{\kappa} \in k^0_{\alpha,\beta}(\name{X}).
\]
{See  \cite{OBN - Magidor Iteration} for further details.}
\end{remark}

\subsection{Separation by Sets}
Let $\Gone \subset \pone$ be a generic filter over $V^0 = V[\Gzero]$.
Let us denote $V^0[\Gone]$ by $V^1$. 
The normal measures $\{\Uo{\alpha}{\beta} \mid \alpha \leq \beta < \lambda\}$ will be used to realize $R_\lambda$.
As mentioned in the introduction, we would like the normal measure on $\kappa$ in $V^1$ to be separated by sets.  

\begin{definition}[$\Gamma, \X{\alpha}{\beta}$]
Define sets in $V^1$:
\begin{enumerate}
\item  $\Gamma = d``\delset'$, the set of Prikry generic points.
\item For every $\alpha,\beta < \lambda$,
\[\X{\alpha}{\beta} = 
\begin{cases}
  \delset_{\beta}(\alpha)\setminus \Gamma &\mbox{if }  \alpha \geq \beta \\
  \delset_{\alpha}(\beta)\cap \Gamma &\mbox{if } \alpha < \beta
\end{cases}\]
\end{enumerate}
\end{definition}

\noindent A simple inspection of Definitions \ref{Definition - I - U^* single ultrapower} and \ref{Definition - I - U^* iterated ultrapower} shows that $\X{\alpha}{\beta} \in \Uo{\alpha}{\beta}$ for every $\alpha,\beta < \lambda$. 

\begin{corollary}\label{Corollary - I - U1 separating sets}
The sets in $\{\X{\alpha}{\beta} \mid \alpha,\beta < \lambda\}$ are pairwise disjoint, 
and $\X{\alpha}{\beta} \in \Uo{\alpha}{\beta}$ for all $\alpha,\beta < \lambda$.
In particular the measures in $\{\Uo{\alpha}{\beta} \mid \alpha,\beta < \lambda\}$ are separated by sets. 
\end{corollary}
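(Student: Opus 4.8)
The plan is to reduce the statement to two things: first, that the sets $\X{\alpha}{\beta}$ are pairwise disjoint, and second, that each $\X{\alpha}{\beta}$ belongs to $\Uo{\alpha}{\beta}$. The latter was already asserted in the sentence preceding the corollary, so the real content is the disjointness, together with assembling the ``separated by sets'' conclusion from these two facts.

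\textbf{Disjointness.} I would split into cases according to whether the indices are of the form $\alpha \geq \beta$ or $\alpha < \beta$. For two ``diagonal-type'' sets $\X{\alpha}{\beta} = \delset_\beta(\alpha)\setminus\Gamma$ and $\X{\alpha'}{\beta'} = \delset_{\beta'}(\alpha')\setminus\Gamma$ with $(\alpha,\beta)\neq(\alpha',\beta')$ (both in the regime $\alpha\geq\beta$, $\alpha'\geq\beta'$): these are contained in $\delset_\beta(\alpha)$ and $\delset_{\beta'}(\alpha')$ respectively, and by the remark following Definition \ref{Definition - Delta_alpha sets}, the sets $\{\delset_\gamma(\eta)\mid \gamma,\eta<\lambda\}$ are pairwise disjoint; hence already $\delset_\beta(\alpha)\cap\delset_{\beta'}(\alpha') = \emptyset$. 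Symmetrically, for two ``off-diagonal'' sets $\X{\alpha}{\beta} = \delset_\alpha(\beta)\cap\Gamma$ and $\X{\alpha'}{\beta'} = \delset_{\alpha'}(\beta')\cap\Gamma$ with $\alpha<\beta$, $\alpha'<\beta'$ and $(\alpha,\beta)\neq(\alpha',\beta')$, disjointness of the $\delset$-sets again gives $\delset_\alpha(\beta)\cap\delset_{\alpha'}(\beta')=\emptyset$. Finally, for a mixed pair — one of the form $\delset_\beta(\alpha)\setminus\Gamma$ and the other $\delset_{\alpha'}(\beta')\cap\Gamma$ — if the underlying $\delset$-sets are distinct we are done as before; and if $(\beta,\alpha)=(\alpha',\beta')$ could occur, one set is contained in the complement of $\Gamma$ while the other is contained in $\Gamma$, so they are disjoint regardless. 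Thus in every case the intersection is empty.

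\textbf{From disjointness to separation.} Recall that $\X{\alpha}{\beta}\in\Uo{\alpha}{\beta}$ for all $\alpha,\beta<\lambda$ (this follows from inspecting Definitions \ref{Definition - I - U^* single ultrapower} and \ref{Definition - I - U^* iterated ultrapower}: in the regime $\alpha\geq\beta$ one checks $\delset_\beta(\alpha)\setminus\Gamma\in\Uo{\alpha}{\beta}$ using $\delset_\beta(\alpha)\in\Uz{\beta}{\alpha}\subseteq\Uo{\alpha}{\beta}$ together with the fact that the critical point $\kappa$ of the relevant embedding lies in the image of a set on which stage $\kappa$ is trivial, so no Prikry point is added there, i.e. $\Gamma$ is measure zero for $\Uo{\alpha}{\beta}$; in the regime $\alpha<\beta$ one uses that stage $\kappa$ of the corresponding iterated ultrapower is a non-trivial one-point Prikry forcing, forcing $\kappa\in k^0_{\alpha,\beta}(\name{\Gamma})$, whence $\Gamma\in\Uo{\alpha}{\beta}$, combined with $\delset_\alpha(\beta)\in\Uz{\alpha}{\beta}\subseteq\Uo{\alpha}{\beta}$). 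Given this, the separation statement is immediate: for each pair $(\alpha,\beta)$ the set $\X{\alpha}{\beta}$ is a member of $\Uo{\alpha}{\beta}$, and by pairwise disjointness it cannot be a member of any other $\Uo{\alpha'}{\beta'}$, since a single normal measure cannot contain two disjoint sets. This is exactly the notion of ``separated by sets'' from the introduction.

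\textbf{Expected main obstacle.} The only genuinely non-formal point is the membership claim $\X{\alpha}{\beta}\in\Uo{\alpha}{\beta}$, and specifically the assertion that $\Gamma$ (the set of all Prikry generic points) has $\Uo{\alpha}{\beta}$-measure one when $\alpha<\beta$ and measure zero when $\alpha\geq\beta$. The disjointness of the $\delset$-sets and the bookkeeping of index regimes are routine. I would therefore spend the bulk of the write-up making the $\Gamma$-measure computation precise via the generic-point characterization in Definitions \ref{Definition - I - U^* single ultrapower} and \ref{Definition - I - U^* iterated ultrapower} and the triviality/non-triviality of stage $\kappa$ in the respective (iterated) ultrapower embeddings, and treat the rest briefly.
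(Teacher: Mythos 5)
Your overall structure is right (disjointness of the $X_{(\alpha,\beta)}$'s plus the membership $X_{(\alpha,\beta)}\in\Uo{\alpha}{\beta}$ yields separation, and the disjointness case-split is correct), and the paper itself gives essentially no proof of this corollary, treating the membership claim as immediate from Definitions \ref{Definition - I - U^* single ultrapower} and \ref{Definition - I - U^* iterated ultrapower}. But your justification of the $\Gamma$-measure computation, which you yourself flag as the real content, has the mechanism backwards in both regimes.

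In the regime $\alpha\geq\beta$ you argue that stage $\kappa$ of $\jz{\beta}{\alpha}(\pone)$ is trivial, and then conclude ``so no Prikry point is added there, i.e.\ $\Gamma$ is measure zero.'' This conflates two different sets: triviality of stage $\kappa$ says $\kappa\notin \jz{\beta}{\alpha}(\delset')$ (so $\delset'$, the set of \emph{nontrivial stages}, has measure zero -- that fact is used for the normality of $\Uo{\alpha}{\beta}$), while $\Gamma$ is the set of \emph{values} $d``\delset'$ of the Prikry function. What one actually needs is $\kappa\notin \jz{\beta}{\alpha}(\Gamma)$, i.e.\ that no Prikry point chosen by the extended generic equals $\kappa$; the correct reason is that $\jz{\beta}{\alpha}(d)\uhr\kappa=d$ takes values $<\kappa$, while for stages $\mu>\kappa$ Remark \ref{Remark p- for U^* single ultrapower} lets one shrink the measure-one sets $\jz{\beta}{\alpha}(p)_\mu$ to $\jz{\beta}{\alpha}(p)_\mu\setminus(\kappa+1)$, so the Prikry point there is forced to be $>\kappa$.

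In the regime $\alpha<\beta$ you say that ``stage $\kappa$ of the corresponding iterated ultrapower is a non-trivial one-point Prikry forcing, forcing $\kappa\in k^0_{\alpha,\beta}(\name{\Gamma})$.'' This is doubly wrong. First, stage $\kappa$ of $k^0_{\alpha,\beta}(\pone)$ is \emph{trivial}: in $N^0_{\alpha,\beta}$ we have $\kappa\in k^0_{\alpha,\beta}(\delset_\alpha(\beta))$ (i.e.\ local $o$-value $\alpha$, Sacks value $\beta$), and the recipe in Definition \ref{Definition - recipe for Qone} makes the stage trivial since $\beta\not<\alpha$. Second, even if stage $\kappa$ were nontrivial, that would give a Prikry point $d(\kappa)<\kappa$, not put $\kappa$ itself into $\Gamma$. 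The stage that is actually nontrivial is $\jz{\alpha}{\beta}(\kappa)$ (there the local $o$-value is $\beta$ and Sacks value is $\alpha$, and $\alpha<\beta$), and Definition \ref{Definition - I - U^* iterated ultrapower} is built precisely around the operation $(p\fr q)^{+(\kappa,\jz{\alpha}{\beta}(\kappa))}$ which forces $k^0_{\alpha,\beta}(\name{d})(\jz{\alpha}{\beta}(\kappa))=\kappa$, so that $\kappa\in k^0_{\alpha,\beta}(\Gamma)$. Your remaining steps (the pairwise disjointness via the $\delset_\gamma(\eta)$ partition plus the $\Gamma$/$\Gamma^c$ split, and the deduction of separation from disjointness and membership) are fine.
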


\section{The Restriction $\jo{\alpha}{\beta} \uhr V$}\label{Section - I - description of j}
For every $\alpha,\beta < \lambda$, let $\jo{\alpha}{\beta} : V^1 \to \Mo{\alpha}{\beta} \cong
\Ult(V^1,\Uo{\alpha}{\beta})$.
The purpose of this section is to describe the restriction $\jo{\alpha}{\beta} \uhr  V^0$ as an iterated ultrapower of the measures in $V^0$.
For every $\alpha,\beta < \lambda$, we first define an iterated ultrapower $\T^0$
resulting in an embedding $\pi^0_{\alpha,\beta} : V^0 \to Z^0_{\alpha,\beta}$.
The main proposition in this section (Proposition \ref{Proposition - I - identify the ultrapower restriction of Uo alpha beta}) states that $\pi^0_{\alpha,\beta} = \jo{\alpha}{\beta}\uhr V^0$. {While the definition of $\pi^0_{\alpha,\beta}$ makes the statement natural, its proof requires preliminary technical results. 
For simplicity we assume that the ground model $V$ is a Mitchell model
$V = L[\U]$ with $V = \K(V)$, where $\U$ is the coherent sequence of normal measures and $o(\kappa) = \lambda$. In particular, $V$ does not contain an overlapping extender.
Let $\alpha,\beta < o(\kappa) = \lambda$. 
Our ground model assumption $V = \K(V) = L[\U]$ and the fact 
$V^1 = V[\Gzero*\Gone]$ imply the following:
\begin{enumerate}
 \item $V = \K(V^1)$,
 \item  $\jo{\alpha}{\beta}\uhr V : V \to Z_{\alpha,\beta}$ is an iterated ultrapower of $V$,
 \item if $\jo{\alpha}{\beta} : V^1 \to M^1_{\alpha,\beta}$ then 
 $M^1_{\alpha,\beta} = Z_{\alpha,\beta}[\Gzero_{\alpha,\beta} * \Gone_{\alpha,\beta}]$ where
  $\Gzero_{\alpha,\beta} * \Gone_{\alpha,\beta}  \subset \jo{\alpha}{\beta}(\pzero * \pone)$ 
  is generic over $Z_{\alpha,\beta}$. 
\end{enumerate}
We refer to \cite{zeman} for these results.
The definition of $\pi^0_{\alpha,\beta}$ for $\alpha,\beta < \lambda$ 
makes use of the ultrapower embedding $\jz{\alpha}{\beta} : V^0 \to \Mz{\alpha}{\beta}$ defined in 
\ref{Definition - Uz alpha beta and jz alpha beta},
and the iterated ultrapower embedding $k^0_{\alpha,\beta} : V^0 \to N^0_{\alpha,\beta}$ defined in 
\ref{Definition - I - k alpha beta embedding}.
Let $\vec{\Delta} = \la \Delta_\alpha(\eta) \mid \alpha < o(\kappa), \eta < \lambda\ra$.

\begin{definition}[$\pi^0_{\alpha,\beta}$]\label{Definition - I - pi0  alpha beta}
${}$\\
$\pi^0_{\alpha,\beta}$ results from a linear iteration 
$\T^0 = \la Z_i^0, \sigma^0_{i,j} \mid 0 \leq i < j \leq \theta\ra$, 
with critical points $\nu_i = \cp(\sigma^0_{i,i+1})$ of length $\theta$.
Here $Z_i^0$ are the intermediate models (iterands) of the iteration, and $\sigma^0_{i,j} : Z_i^0 \to Z_j^0$ are the connecting
iterations. For every $i < \theta$ we denote the image of the $i-$th critical point $\nu_i$, $\sigma^0_{i,i+1}(\nu_i)$,
by $\nu_i^1$. 
We set $Z^0_0 = V^0$, $\sigma^0_{0,0} = id_{Z^0_0}$, $Z^0_1 = N^0_{\alpha,\beta}$, and
\[\sigma^0_{0,1} = 
\begin{cases} 
\jz{\beta}{\alpha} &\mbox{if }  \beta \leq \alpha \\
k^0_{\alpha,\beta} &\mbox{if } \alpha \geq \beta.
\end{cases}\]
We define $\nu_0 = \kappa$, and set $\nu^1_0 = \jz{\alpha}{\beta}(\kappa) < \sigma^0_{0,1}(\kappa)$ if $\alpha < \beta$, and 
leave $\nu^1_0$ undefined otherwise.  \\

\textbf{Successor stage:}
Suppose that $\T^0\uhr i$ has been defined up to stage $1 \leq i < \theta$, define $Z^0_{i+1}$ and $\sigma^0_{i,i+1}$ as follows:
Let $\nu^*_i$ be the supremum of $\{\nu_j \mid j < i\}$ (the set of critical points in $\T^0\uhr i$), and take $\nu_i$ to be the minimal ordinal $\nu \geq \nu_i^*$ which satisfies
\begin{enumerate}
 \item The forcing of $\sigma^0_{0,i}(\pone)$ at stage $\nu$ is not trivial, i.e. $\nu \in \sigma^0_{0,i}(\delset')$, and
 \item $\nu$ does not belong to $\sigma^0_{0,i}``\{\nu_j^1 \mid j < i\}$.
\end{enumerate}  
 These two requirements imply that the critical points of the 
 iteration  $\T^0$ are strictly increasing (i.e. the iteration is normal). 
 Since $\nu_i \in \sigma^0_{0,i}(\delset')$ then there are unique $\beta_i < \alpha_i$ so that 
 $\nu_i \in \sigma^0(\delset')_{\alpha_i}(\beta_i)$. 
 We define $\sigma^0_{i,i+1} = j^0_{\nu_i, (\beta_i,\alpha_i)} : Z_i^0 \to Z_{i+1}^0$ and 
 set $\nu_i^1 = j^0_{\nu_i, (\beta_i,\alpha_i)}(\nu_i)$. \\
 
 \textbf{Limit stage:}
 If $\delta < \theta$ is a limit ordinal then we take $Z^0_\delta$ to be the direct limit of $\T^0\uhr\delta$. \\
 ${}$\\
 The iteration terminates at stage $\theta$, when
 $\sigma^0_{0,\theta}(\delset') \subset \{\nu_i^1 \mid i < \theta \} \cup \nu^*_\theta$.
 Note that 
 \[\sigma^0_{0,1}(\kappa) =
 \begin{cases} 
 \jz{\beta}{\alpha}(\kappa) &\mbox{if }  \beta \leq \alpha \\ 
 k^0_{\alpha,\beta}(\kappa) &\mbox{if } \alpha \geq \beta.
\end{cases}\]
 By induction on $i < \theta$,  it is not difficult to verify that $\sigma^0_{0,1}(\kappa)$ is a fixed point of $\sigma^0_{1,i}$. It follows that the iteration must terminate after at most $\sigma^0_{0,1}(\kappa)$ many steps
  as each $\nu_i < \sigma^0_{1,i}(\sigma^0_{0,1}(\kappa)) = \sigma^0_{0,1}(\kappa)$, and the iteration is normal.
\end{definition}

 The following facts summarizes the main properties of $\pi^0_{\alpha,\beta}$, and can be easily proved by induction on $1 \leq i < \theta$.
 \begin{corollary}${}$
 \begin{enumerate}
 \item  For every $\alpha \geq \beta$ we have:
  \begin{itemize}
   \item $\sigma^0_{0,1} = \jz{\beta}{\alpha}$, $\nu_0 = \kappa$, and $\nu_0^1$ is not defined,
   \item for every $1 \leq i \leq \theta$ both $\kappa = \nu_0$ and $\jz{\beta}{\alpha}(\kappa)$ are not moved by 
	$\sigma^0_{1,i}$, and  $\nu_i \in (\kappa,\jz{\beta}{\alpha}(\kappa))$.
   \item $\sigma^0_{0,i}(\delset')\cap [\kappa,\nu_i) = \{\nu_j^1 \mid j < i\}$ for every $1 \leq i < \theta$. 
  \end{itemize}

   \item For every $\alpha < \beta$ we have:
  \begin{itemize}
   \item $\sigma^0_{0,1} = k^0_{\alpha,\beta}$, $\nu_0 = \kappa$, $\nu_0^1 = \jz{\alpha}{\beta} < k^0_{\alpha,\beta}(\kappa)$,
   \item for every $1 \leq i \leq \theta$, neither $\nu_0$, $\nu_0^1$, nor $k^0_{\alpha,\beta}(\kappa)$ are moved
	  by $\sigma^0_{1,i}$. Furthermore, each critical point $\nu_i$ either belongs to $(\nu_0,\nu_0^1)$ or $(\nu^1_0, k^0_{\alpha,\beta}(\kappa))$.
   \item  For every $i < \theta$ with $\nu_i \in (\nu_0,\nu_0^1)$ we have
	  $\sigma^0_{1,i}(\delset') \cap [\kappa,\nu_i) = \{ \nu_j^1 \mid 1 \leq j < i \}$.
   \item  For every $i < \theta$ with $\nu_i \in (\nu^1_0,k^0_{\alpha,\beta}(\kappa))$,
	  $\sigma^0_{1,i}(\delset') \cap [\kappa,\nu_i) = \{ \nu_j^1 \mid 0 \leq j < i \}$.  
  \end{itemize}
  \end{enumerate}
  \end{corollary}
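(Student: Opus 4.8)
The plan is to establish all of the listed clauses simultaneously by a single induction on the stage $i$, $1 \leq i \leq \theta$, following the recursion of Definition \ref{Definition - I - pi0  alpha beta}; at each stage I would keep track in parallel of (a) which of $\kappa$, $\jz{\alpha}{\beta}(\kappa)$ (which is $\nu_0^1$ when $\alpha < \beta$) and $j_\beta(\kappa)$ are left fixed by the tail map $\sigma^0_{1,i}$, (b) where the critical point $\nu_i$ lies, and (c) the exact value of $\sigma^0_{0,i}(\delset') \cap [\kappa, \nu_i)$. The base case $i = 1$ is read off the definition of $\sigma^0_{0,1}$ together with the material recorded just before the corollary. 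For $\alpha \geq \beta$, $\sigma^0_{0,1} = \jz{\beta}{\alpha}$ is the single ultrapower of $V^0$ by the normal measure $\Uz{\beta}{\alpha}$ on $\kappa$, so $\nu_0 = \cp(\sigma^0_{0,1}) = \kappa$, $\nu_0^1$ is undefined by the definition, and $\jz{\beta}{\alpha}(\kappa) = j_\beta(\kappa)$ since $\jz{\beta}{\alpha}\uhr V = j_\beta$. For $\alpha < \beta$, $\sigma^0_{0,1} = k^0_{\alpha,\beta}$ factors as $\iz{\alpha}{\beta}{\beta}{\alpha} \circ \jz{\alpha}{\beta}$, whose successive critical points are $\kappa$ and $\jz{\alpha}{\beta}(\kappa)$, so $\nu_0 = \kappa$ and $\nu_0^1 = \jz{\alpha}{\beta}(\kappa) < k^0_{\alpha,\beta}(\kappa) = j_{\alpha,\beta}(\kappa) = j_\beta(\kappa)$. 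In either case I would then use coherence of $\U$ (which controls the $o$-value of each point in the target model) and the Friedman--Magidor identities for the generic Sacks values at $\kappa$ and at $j_\alpha(\kappa)$ recorded in \ref{Notations - j0 embeddings}, together with the trivial/nontrivial dichotomy of Definition \ref{Definition - recipe for Qone}: $\kappa$ comes out a trivial stage of $\sigma^0_{0,1}(\pone)$ in both cases (its Sacks value, $\alpha$ in the first case and $\beta$ in the second, is $\geq$ its $o$-value), while $\nu_0^1 = \jz{\alpha}{\beta}(\kappa)$ comes out a nontrivial stage (Sacks value $\alpha <$ $o$-value $\beta$) that is, however, barred from being a critical point by requirement (2). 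Since $\nu_1$ is by definition the least ordinal $\geq \kappa$ meeting requirements (1)--(2), since $j_\beta(\kappa)$ is a fixed point of the whole tail (as already observed after Definition \ref{Definition - I - pi0  alpha beta}), and since $\delset'$ is unbounded in $\kappa$ --- so that $\sigma^0_{0,1}(\delset')$ meets $(\kappa, j_\beta(\kappa))$; otherwise the iteration halts at stage $1$ and there is nothing to prove --- this gives $\nu_1 \in (\kappa, j_\beta(\kappa))$, $\nu_1 \neq \nu_0^1$, and $\sigma^0_{0,1}(\delset') \cap [\kappa, \nu_1) = \emptyset$, which is the $i = 1$ instance of clause (c).

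For the successor step $i \mapsto i + 1$ I would argue as follows. The map $\sigma^0_{i,i+1} = j^0_{\nu_i,(\beta_i,\alpha_i)}$ is an ultrapower with critical point $\nu_i$, hence fixes every ordinal $< \nu_i$; by the induction hypothesis on the location of $\nu_i$ it therefore fixes $\kappa$, $j_\beta(\kappa)$ (and $\nu_0^1$ when $\alpha < \beta$), which, combined with the fixed-point observation after Definition \ref{Definition - I - pi0  alpha beta}, yields the first two bulleted clauses at stage $i+1$. Since $\sigma^0_{0,i+1}(\delset') = \sigma^0_{i,i+1}(\sigma^0_{0,i}(\delset'))$ and $\cp(\sigma^0_{i,i+1}) = \nu_i$, the induction hypothesis $\sigma^0_{0,i}(\delset') \cap [\kappa, \nu_i) = \{\nu_j^1 \mid j < i\}$ is carried over unchanged below $\nu_i$. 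The substantive point is the behaviour of $\sigma^0_{i,i+1}$ on $[\nu_i, \nu_i^1)$: here I would use coherence of $\U$ to see that in $Z^0_{i+1}$ the point $\nu_i$ has $o$-value $\beta_i$, and the Friedman--Magidor computation to see that its generic Sacks value in $Z^0_{i+1}$ is $\alpha_i > \beta_i$, so that $\nu_i$ has become a \emph{trivial} stage and is removed from $\sigma^0_{0,i+1}(\delset')$ --- which in particular forces $\nu_{i+1} > \nu_i$, so the critical sequence is strictly increasing; simultaneously $\nu_i \in \sigma^0_{0,i}(\delset')$ gives $\nu_i^1 = \sigma^0_{i,i+1}(\nu_i) \in \sigma^0_{i,i+1}(\sigma^0_{0,i}(\delset')) = \sigma^0_{0,i+1}(\delset')$, so $\nu_i^1$ remains a nontrivial stage. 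With the old nontrivial stages below $\nu_i$ all fixed by the $\nu_i$-complete ultrapower $\sigma^0_{i,i+1}$, the identity $\sigma^0_{0,i+1}(\delset') \cap [\kappa, \nu_{i+1}) = \{\nu_j^1 \mid j < i + 1\}$ follows once one checks which, if any, new nontrivial stages $\sigma^0_{i,i+1}$ introduces in $(\nu_i, \nu_i^1)$ and that $\nu_{i+1}$ is then located accordingly --- the routine-but-fiddly part. In the case $\alpha < \beta$, the split of clauses (c) over the two subintervals $(\nu_0, \nu_0^1)$ and $(\nu_0^1, k^0_{\alpha,\beta}(\kappa))$ is explained by the fact that $\nu_0^1 = \jz{\alpha}{\beta}(\kappa)$ is itself a nontrivial stage permanently barred as a critical point: it is absent from $\sigma^0_{1,i}(\delset') \cap [\kappa, \nu_i)$ exactly while all earlier critical points stay in $(\nu_0, \nu_0^1)$, and it joins that set as soon as the iteration passes $\nu_0^1$. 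Limit stages $\delta < \theta$ are handled by continuity of the direct-limit system: $\nu^*_\delta = \sup_{i < \delta} \nu_i$, every $\nu_j^1$ with $j < \delta$ is fixed from stage $j + 1$ on and lies below $\nu^*_\delta$, the three displayed identities for $i < \delta$ pass to the limit (so $\sigma^0_{0,\delta}(\delset') \cap [\kappa, \nu^*_\delta) = \{\nu_j^1 \mid j < \delta\}$), and the choice of $\nu_\delta$ then proceeds exactly as in the successor clause.

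The main obstacle I expect is precisely the accounting inside the step: one needs a sufficiently precise description of how an ultrapower by a one-point-Prikry measure $U^0_{\nu_i,(\beta_i,\alpha_i)}$ on $\nu_i$ redistributes the nontrivial stages of the iterated forcing --- that $\nu_i$ turns trivial (its Mitchell order dropping below its generic Sacks value), that its image $\nu_i^1$ stays nontrivial, and a control of which ordinals of $(\nu_i, \nu_i^1)$ become nontrivial stages, so that $\nu_{i+1}$ is pinned down relative to $\nu_i^1$. For this I would lean on coherence of $\U$ to fix the $o$-values, on the Friedman--Magidor computation of generic Sacks values in iterated ultrapowers recorded in \ref{Notations - j0 embeddings} to decide which stages are trivial, and on $\nu_i$-completeness of the measures involved; everything else is routine transfinite bookkeeping along the lines already indicated in the discussion following Definition \ref{Definition - I - pi0  alpha beta}.
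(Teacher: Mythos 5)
Your approach is the one the paper intends; the paper offers no argument beyond the remark that the corollary ``can be easily proved by induction on $1 \leq i < \theta$,'' and the induction you sketch covers the base case and the elementary parts of the step correctly (fixed points of the tail, coherence for the $o$-values, the Friedman--Magidor identities for the Sacks values, the trivial/nontrivial dichotomy of Definition \ref{Definition - recipe for Qone}).

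The point you defer as ``routine-but-fiddly''---that $\sigma^0_{i,i+1}$ produces no new elements of $\sigma^0_{0,i+1}(\delset')$ in the interval $(\nu_i,\nu_i^1)$---is the entire content of the third bulleted clause and deserves to be spelled out, because it is not automatic: if $\{\nu^1_j \mid j<i\}$ were cofinal in $\nu_i$, the function $\mu \mapsto \min\{\nu^1_j : \nu^1_j > \mu\}$ would, by elementarity, represent a nontrivial stage strictly between $\nu_i$ and $\nu^1_i$, and the displayed identity would fail at $i+1$. The observation that rules this out is that $\gamma_i := \sup\{\nu^1_j \mid j<i\}$ is \emph{strictly} below $\nu_i$. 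One gets this by carrying $\nu_j < \nu^1_j < \nu_{j+1}$ along in the induction (for successor $i$), and by noting at limit $\delta$ that $\nu^*_\delta \notin \sigma^0_{0,\delta}(\delset')$ (the supremum of critical points of a normal iteration is not a measurable of the direct limit), so that $\nu_\delta > \nu^*_\delta = \gamma_\delta$. Given $\gamma_i < \nu_i$, any $\xi \in \sigma^0_{0,i+1}(\delset')\cap(\nu_i,\nu^1_i)$ is $[f]$ for some $f$ with $\mu < f(\mu) < \nu_i$ and $f(\mu) \in \sigma^0_{0,i}(\delset')$ almost everywhere; since $U^0_{\nu_i,(\beta_i,\alpha_i)}$ is normal on $\nu_i$ it concentrates on $(\gamma_i,\nu_i)$, where by the inductive hypothesis $\sigma^0_{0,i}(\delset')$ is empty, so no such $f$ exists. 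Combined with your observations that $\nu_i$ itself drops out of $\sigma^0_{0,i+1}(\delset')$ (its $o$-value falls to $\beta_i < \alpha_i$, its Sacks value) while $\nu^1_i$ stays in, this forces $\nu_{i+1} > \nu^1_i$ and closes the mutual recursion. One small slip worth flagging: at $i=1$ with $\alpha < \beta$ you conclude $\sigma^0_{0,1}(\delset')\cap[\kappa,\nu_1)=\emptyset$ unconditionally, but this holds only when $\nu_1 < \nu^1_0$; when $\nu_1 > \nu^1_0$ the intersection is $\{\nu^1_0\}$, as the corollary's split of clause (c) records and as you yourself acknowledge two sentences later.
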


\begin{proposition}\label{Proposition - I - identify the ultrapower restriction of Uo alpha beta} For every $\alpha,\beta < \lambda$, $\pi^0_{\alpha,\beta}$ is the restriction of 
$\jo{\alpha}{\beta} : V^1 \to \Mo{\alpha}{\beta} \cong
\Ult(V^1,\Uo{\alpha}{\beta})$ to $V^0$.
\end{proposition}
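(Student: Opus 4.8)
The plan is to show that $\jo{\alpha}{\beta}\uhr V^0$ and $\pi^0_{\alpha,\beta}=\sigma^0_{0,\theta}$ are one and the same iterated ultrapower of $V^0$. Since $V=\K(V^1)=L[\U]$ contains no overlapping extenders, $\jo{\alpha}{\beta}\uhr V:V\to Z_{\alpha,\beta}$ is a linear iterated ultrapower of $V$ by measures of the coherent sequence $\U$ and their images (recorded above, following \cite{zeman}); lifting this iteration through $\Gzero$ and using Fact~\ref{Fact - I - Friedman Magidor} --- each $U_{\nu,\tau}$ lifts, in a $\pzero$-extension, to the measures $U^0_{\nu,(\tau,\eta)}$, with the value $\eta$ read off from the tuning fork of the induced generic in the target model --- the restriction $\jo{\alpha}{\beta}\uhr V^0$ is itself a linear iterated ultrapower of $V^0$; denote it $\mathcal{S}=\la Y_i,\tau_{i,j}\ra$ with critical points $\mu_i=\cp(\tau_{i,i+1})$. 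We prove by induction on $i$ that $\mathcal{S}$ and $\T^0$ agree through stage $i$ and that there is an elementary factor embedding $\ell_i:Z^0_i\to\Mo{\alpha}{\beta}$ with $\ell_i\circ\sigma^0_{0,i}=\jo{\alpha}{\beta}\uhr V^0$, commuting with the maps of $\T^0$ and with the $\ell_j$'s at limit stages, and satisfying $\cp(\ell_i)=\mu_i$. Limit stages pass to the direct limit. The induction halts at a stage $\theta$ which will turn out to be exactly the one at which $\sigma^0_{0,\theta}(\delset')\subseteq\{\nu^1_i\mid i<\theta\}\cup\nu^*_\theta$, i.e.\ the termination clause of Definition~\ref{Definition - I - pi0  alpha beta}; at that point $\ell_\theta$ has no critical point, so $\ell_\theta=\id$, and therefore $\jo{\alpha}{\beta}\uhr V^0=\sigma^0_{0,\theta}=\pi^0_{\alpha,\beta}$.

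For the base step we use the defining characterizations of $\Uo{\alpha}{\beta}$. If $\alpha\geq\beta$, Definition~\ref{Definition - I - U^* single ultrapower} gives $\Uz{\beta}{\alpha}\subseteq\Uo{\alpha}{\beta}$; since $\Uz{\beta}{\alpha}$ is an ultrafilter on $\power(\kappa)^{V^0}$ this forces $\Uo{\alpha}{\beta}\cap V^0=\Uz{\beta}{\alpha}$, and the standard factor lemma yields $\ell_1:\Mz{\beta}{\alpha}\to\Mo{\alpha}{\beta}$ with $\ell_1\circ\jz{\beta}{\alpha}=\jo{\alpha}{\beta}\uhr V^0$, so the first step of $\mathcal{S}$ is the ultrapower by $\Uz{\beta}{\alpha}$, matching $\sigma^0_{0,1}=\jz{\beta}{\alpha}$ and $\nu_0=\kappa$. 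If $\alpha<\beta$, the same argument produces a first factor through $\jz{\alpha}{\beta}$, and Definition~\ref{Definition - I - U^* iterated ultrapower} together with Remark~\ref{Remark p+- for U^* iterated ultrapower} shows that the Prikry point of the induced generic at stage $\jz{\alpha}{\beta}(\kappa)$ of the target equals $\kappa$ (this is the role of the $p^{+(\kappa,\jz{\alpha}{\beta}(\kappa))}$ operation in Definition~\ref{Definition - I - U^* iterated ultrapower}); this forces the second step of $\mathcal{S}$ to be the ultrapower by $\jz{\alpha}{\beta}(\Uz{\beta}{\alpha})$, i.e.\ $\sigma^0_{0,1}=\iz{\alpha}{\beta}{\beta}{\alpha}\circ\jz{\alpha}{\beta}=k^0_{\alpha,\beta}$, with $\nu_0=\kappa$ and $\nu^1_0=\jz{\alpha}{\beta}(\kappa)$, exactly as in Definition~\ref{Definition - I - pi0  alpha beta}.

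The successor step is the main point. Assume $\mathcal{S}$ and $\T^0$ agree through stage $i$ and that $\ell_i:Z^0_i\to\Mo{\alpha}{\beta}$ is given with $\cp(\ell_i)=\mu_i$. Then the next step of $\mathcal{S}$ is the ultrapower of $Z^0_i$ by the normal measure $W_i=\{X\in\power(\mu_i)^{Z^0_i}\mid\mu_i\in\ell_i(X)\}$ derived from $\ell_i$, and the factor lemma produces $\ell_{i+1}:\Ult(Z^0_i,W_i)\to\Mo{\alpha}{\beta}$ with $\ell_{i+1}\circ\sigma^0_{i,i+1}=\ell_i$; it remains to identify $\mu_i$ and $W_i$. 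Reading $\jo{\alpha}{\beta}$ off from the Prikry property of the Magidor iteration $\pone$ --- i.e.\ re-running the single- and iterated-ultrapower analyses underlying Definitions~\ref{Definition - I - U^* single ultrapower} and~\ref{Definition - I - U^* iterated ultrapower} through the initial segment $\T^0\uhr i$ --- one finds that the generic $\Gone_{\alpha,\beta}$ carried by $\Mo{\alpha}{\beta}$ for $\sigma^0_{0,\theta}(\pone)$ has, below $\sigma^0_{0,i}(\kappa)$, Prikry points exactly at the images $\sigma^0_{0,i}(\nu^1_j)$, $j<i$ (the effect of the $p^{+(\mu,\nu)}$ operations of Definition~\ref{Definition - I - k alpha beta embedding}). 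Hence the least stage $\geq\nu^*_i$ of $\sigma^0_{0,i}(\pone)$ at which $\ell_i$ moves the Prikry point is the least $\nu\geq\nu^*_i$ lying in $\sigma^0_{0,i}(\delset')$ but not in $\{\sigma^0_{0,i}(\nu^1_j)\mid j<i\}$ --- that is, $\mu_i=\nu_i$ in the notation of Definition~\ref{Definition - I - pi0  alpha beta}; and for the unique $\beta_i<\alpha_i$ with $\nu_i\in\sigma^0_{0,i}(\delset'_{\alpha_i}(\beta_i))$, the recipe in Definition~\ref{Definition - recipe for Qone} prescribes at stage $\nu_i$ the one-point Prikry forcing whose measure lifts $U^0_{\nu_i,(\beta_i,\alpha_i)}$ of $Z^0_i$ --- the tuning-fork index being pinned down by the copy of Fact~\ref{Fact - I - Friedman Magidor}(3) valid in $Z^0_i$ --- so that $W_i$ is exactly this measure and $\sigma^0_{i,i+1}=j^0_{\nu_i,(\beta_i,\alpha_i)}$. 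If no such $\nu$ exists the induction halts, which is precisely the termination clause invoked above.

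The hard part is this successor step, in which two independent bookkeeping schemes must be reconciled. On the Magidor-iteration side one has to verify that the generic placed on the target of an arbitrary partial iterate $Z^0_i$ has its Prikry points below $\sigma^0_{0,i}(\kappa)$ exactly at the $\sigma^0_{0,i}$-images of the earlier $\nu^1_j$'s and nowhere else; this amounts to carrying the computations of \ref{Definition - I - U^* single ultrapower}--\ref{Remark p+- for U^* iterated ultrapower} uniformly through $\T^0\uhr i$. On the Friedman--Magidor side one must simultaneously track which of the $\kappa$ many $\mo$-equivalent lifts $U^0_{\nu_i,(\beta_i,\cdot)}$ of the single $V$-measure $U_{\nu_i,\beta_i}$ is selected at stage $\nu_i$, a choice governed by the tuning fork of the generic induced on $Z^0_i$ as in Fact~\ref{Fact - I - Friedman Magidor}(3). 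Checking that these two prescriptions are mutually consistent and together reproduce the recipe of Definition~\ref{Definition - recipe for Qone} at every stage is the technical core; the base step, the limit stages, the factor-lemma bookkeeping, and the termination argument are then routine.
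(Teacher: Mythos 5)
Your proposal replaces the paper's ``bottom-up'' genericity argument with a ``top-down'' comparison of iterations: the paper builds the filter $\Gone_{\alpha,\beta}$ on $Z^0_{\alpha,\beta}$ out of structural iterations, uses the Structural Lemma (Lemma~\ref{Lemma - I - Structural Lemma}) to prove it is generic, and then shows the induced embedding $\pi^1_{\alpha,\beta}$ coincides with $j^1_{(\alpha,\beta)}$ by verifying surjectivity of the factor $e^1_{\alpha,\beta}$; you instead factor $j^1_{(\alpha,\beta)}\uhr V^0$ directly into its normal-iteration decomposition and match it against $\T^0$ stage by stage. Both routes are legitimate and conceptually independent, and they converge on the same technical content: understanding how the image $j^1_{(\alpha,\beta)}(\Gone)$ interacts with the $\delset'$-stages and the $p^{+(\mu,\nu)}$ operations along a partial iterate. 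Your ``re-running the single- and iterated-ultrapower analyses underlying Definitions~\ref{Definition - I - U^* single ultrapower} and~\ref{Definition - I - U^* iterated ultrapower} through $\T^0\uhr i$'' is precisely what the paper encodes in Lemmas~\ref{Lemma - I - Structural Lemma}, \ref{Lemma - I - Structural Iteration and Structural Function} and Corollary~\ref{Corollary - I - Main Structural}; you identify this correctly as ``the technical core'' but do not develop it, so as written this is a plan for a proof rather than a proof.

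Two points that your sketch passes over lightly would need real care to be supplied. First, the claim that $\cp(\ell_i)$ must lie in $\sigma^0_{0,i}(\delset')$ rather than at some other measurable of $Z^0_i$ is not automatic; it uses the coherence of $\U$ together with what is recorded in Remarks~\ref{Remark - III - More Structural Remarks}(3) (applied critical points drop out of $j_{0,n}(\delset')$, new ones lie in it), and it is what ties the next critical point to the recipe of Definition~\ref{Definition - recipe for Qone}. Second, ``the generic placed on the target of an arbitrary partial iterate $Z^0_i$'' is not given a priori -- $Z^0_i$ is an iterand, not a forcing extension, and carries no canonical $\sigma^0_{0,i}(\pone)$-generic; the filter whose Prikry points you want to locate must first be identified via $\ell_i$ and $j^1_{(\alpha,\beta)}(\Gone)$, and the density argument showing that its Prikry points in $[\kappa,\nu_i)$ are exactly $\{\nu^1_j : j<i\}$ with value $\nu_j$ at $\nu^1_j$ is exactly where the paper's structural-function machinery earns its keep. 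If you supply those pieces the argument goes through and gives a genuinely alternative presentation, but as a sketch it defers exactly the part that makes Proposition~\ref{Proposition - I - identify the ultrapower restriction of Uo alpha beta} nontrivial.
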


\subsection{Structural results for dense open sets in $\pone$}\label{SubSection - I - Structural Results On Dense Open Sets}

In this section we prove several preliminary results, which will be used in the proof of Proposition \ref{Proposition - I - identify the ultrapower restriction of Uo alpha beta}.
We focus on a specific family of finite subiterations of $\T^0$ named \emph{structural iterations for} $\Uo{\alpha}{\beta}$, and use them to describe a criterion for meeting dense open sets in $\pi^0_{\alpha,\beta}(\pone)$.

\begin{definition}[structural function, and structural extension]\label{Definition - I - Structural Function and Extension}${}$\\
We define by induction on $n < \omega$ a \textit{structural function} $f$ of degree $n$, avoiding $b \subset \kappa$.
For $n = 0$, a structural function of degree $0$ is the trivial function $f^0 = \emptyset$.
A function $f = f^{n+1}$ is a structural function of degree $n+1$, avoiding $b$, 
if there is a unique ordinal $\nu_f < \kappa$, and a $\pone_{\nu_f}$ name $\name{X_f}$ so that the following holds:
\begin{enumerate}
 \item $\nu_f \in \delset' \setminus b$. 
 \item $0_{\pone_{\nu_f}} \force \name{X_f} \in \name{U_{\nu_f}^*}$.
 \item $\dom(f)$ is the set of all $\pone_{\nu_f}$ names for ordinals in $\name{X_f}$.
 \item For every name $\tau \in \dom(f)$, $f(\tau)$ is a structural function $g$ of degree $n$ avoiding $b$, 
	and $\nu_g < \nu_f$. 
\end{enumerate}
We say that $f$ is a structural function if there exists some $n < \omega$ so that $f$ is a structural function of degree $n$.\\
Let $p$ be a condition, and $f$ be a structural function avoiding $\supp(p)$. 
We say that a condition $q$ is a \textit{structural extension} of $p$ by $f$ 
if the following holds:
\begin{enumerate}
 \item If $f$ has degree $0$ then $q$ is a structural extension of $p$ by $f$ if $q \geq^* p$.
 \item If $f$ has degree $n+1$, then $q$ is a structural extension of $p$ by $f$ if 
	there are $r \geq^* p\uhr \nu_f$ and $\tau \in \dom(f)$ so that $r \force \tau \in p_{\nu_f}$, 
	and $q$ is a structural extension of $r \fr (p \setminus \nu_f)^{+(\tau,\nu_f)}$ by (the degree $n$ structural function) $f(\tau)$. Note that $r \fr (p \setminus \nu_f)^{+(\tau,\nu_f)} \geq p$.
\end{enumerate}
\end{definition}

\begin{lemma}\label{Lemma - I - Structural Lemma}
For every open dense set $D \subset \pone$ and $p \in \pone$ there exists a structural function $f$,
avoiding $\supp(p)$, so that every structural extension of $p$ by $f$ has a direct extension in $D$.
\end{lemma}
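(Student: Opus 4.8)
The plan is to derive the lemma from the Prikry property of the Magidor iteration $\pone$ (see \cite{Gitik-HB} and \cite{OBN - Magidor Iteration}), by a recursion that builds $f$ while descending through the coordinates of $\pone$ from the top. First I would fix $D$ and $p$ and split into two cases. If some direct extension $p^{*}\ge^{*}p$ lies in $D$, take $f=\emptyset$ (degree $0$): every structural extension of $p$ by $f$ is then some $q\ge^{*}p$, and since $\ge^{*}$ adds no Prikry points, $q$ and $p^{*}$ have a common $\ge^{*}$-extension $q\wedge p^{*}$, which lies in $D$ because $D$ is open, so $q$ has a direct extension in $D$. Otherwise no direct extension of $p$ meets $D$, and the Prikry property of $\pone$ yields $p^{*}\ge^{*}p$ together with a nonempty, strictly decreasing finite tuple of coordinates $\nu_{0}>\dots>\nu_{m}$ in $\delset'$, disjoint from $\supp(p)$, and measure-one sets at each, such that $p^{*}$ together with any choice of Prikry points at these coordinates --- and, after each such choice, at finitely many further coordinates below it --- meets $D$.

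Next I would process the top coordinate. Set $\nu_{f}=\nu_{0}$, let $\name{X_{f}}\in\name{U^{*}_{\nu_{0}}}$ be the associated measure-one name, shrunk so that $0_{\pone_{\nu_{0}}}\force\name{X_{f}}\subseteq\name{p^{*}_{\nu_{0}}}$, and for each $\pone_{\nu_{0}}$-name $\tau$ for an ordinal in $\name{X_{f}}$ let $f(\tau)$ be the structural function obtained by applying the same construction recursively to the pair $\bigl((p^{*})^{+(\tau,\nu_{0})},\,D\bigr)$. By the choice of $\nu_{0}$ as the top decision coordinate, the distinguished coordinate of $f(\tau)$ (when $f(\tau)\ne\emptyset$) is $<\nu_{0}$, so the descent requirement in Definition \ref{Definition - I - Structural Function and Extension} holds, and $f$ avoids $\supp(p)$: all the decision coordinates lie outside $\supp(p)$ --- no dense open set can force a Prikry choice at a coordinate where $p$ already specifies one --- and the recursion only passes to strictly smaller coordinates. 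To check that every structural extension $q$ of $p$ by $f$ has a direct extension in $D$, I would unwind Definition \ref{Definition - I - Structural Function and Extension}: $q$ arises along some branch $\tau_{0},\tau_{1},\dots$ of $f$ by taking direct extensions below, pinning $d(\nu_{0})=\tau_{0}$, then $d(\nu_{f(\tau_{0})})=\tau_{1}$, and so on, and finally a direct extension; since each $\tau_{i}$ names an ordinal in the shrunk measure-one set attached to its node, the condition reached after all these pinnings is, by the Prikry-property data, a direct extension of a condition in $D$, so --- $D$ being open --- $q$ has a direct extension in $D$ as well, after a harmless $\ge^{*}$-amalgamation of the tail parts (using that the $\ge^{*}$-order on a tail $\pone\setminus(\nu+1)$ is $\nu$-closed).

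The main obstacle I expect is termination: the recursion must have a uniform finite depth, so that $f$ is a structural function of a well-defined degree $n<\omega$ rather than producing ever larger degrees along different branches. This reduces to the finiteness of a ``Prikry rank'' $\rho(p,D)<\omega$ --- the least $n$ such that some $p^{*}\ge^{*}p$ meets $D$ after at most $n$ Prikry choices --- together with the strict drop $\rho\bigl((p^{*})^{+(\tau,\nu_{0})},D\bigr)<\rho(p,D)$ for $\name{U^{*}_{\nu_{0}}}$-almost every $\tau$, the ``almost every'' being absorbed into $\name{X_{f}}$ by the normality of $\name{U^{*}_{\nu_{0}}}$. The finiteness of $\rho(p,D)$ is precisely the content of the Prikry property for Magidor iterations of Prikry-type forcings and is established there by a fusion argument over the $\ge^{*}$-closed tails; I expect the only genuinely delicate point to be the bookkeeping that makes the shrinkings at $\nu_{0}$ (and at the finitely many coordinates of $\supp(p)$) uniform in $\tau$, so that the various $\name{X_{f}}$'s and recursive $f(\tau)$'s assemble into a single structural function of degree $\rho(p,D)$; the remainder is routine unwinding of the definitions.
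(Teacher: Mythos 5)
Your plan reduces the lemma to the finiteness of a ``Prikry rank'' $\rho(p,D)$, and then runs a recursion on that rank. The recursion itself (processing the top coordinate, shrinking by normality, and building $f(\tau)$ recursively on the successor conditions $(p^*)^{+(\tau,\nu_0)}$) is fine and parallels the successor step of the paper's argument. The problem is the foundation. You attribute ``$\rho(p,D)<\omega$'' to ``the Prikry property for Magidor iterations,'' but the Prikry property in the form actually available from \cite{Gitik-HB} is the statement that every sentence of the forcing language is decided by a direct extension; it is not the statement that every dense open set is captured by a finite tree of Prikry choices sitting above a direct extension. The latter --- the ``strong Prikry property'' you are invoking --- is not a corollary of the former, and in fact it is precisely what Lemma~\ref{Lemma - I - Structural Lemma} asserts, once you translate ``finite tree of Prikry choices'' into the structural-function formalism of Definition~\ref{Definition - I - Structural Function and Extension}. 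So your proof as written is circular (or at best transfers the burden to a reference that the paper does not take as given; it proves the statement itself).

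What your outline is missing is the mechanism that actually forces the rank to be finite, and this is where the paper's proof is doing the real work: it inducts on the coordinate $\nu\le\kappa$ of the iteration $\pone_\nu$, not on the degree of $f$. At successor stages the induction is combined with the Prikry property and the $\sigma$-completeness of $U^*_\nu$ to stabilize a single degree $n$ on a measure-one set of names $\tau$. At limit stages $\delta$, the argument shows that if $D$ could not be ``pushed down'' to some $\pone_\nu$ with $\nu<\delta$, then one could recursively build a direct extension $p^*$ of $p$ whose every coordinate forces that no tail direct extension enters $D$, contradicting density; this uses the $\geq^*$-closedness of the tails (the Magidor support). There is no analogue of this limit-stage contradiction in your proposal; it is exactly what makes $\rho(p,D)$ finite, and no amount of bookkeeping at the successor steps can substitute for it. A secondary, smaller point: the claim that no dense open set can require a Prikry choice inside $\supp(p)$ is only immediate if $\supp(p)$ means the set of coordinates where $p$ has already fixed a point; for coordinates where $p$ merely shrank the measure-one set, the paper handles this case separately (``if $\nu\in\supp(p)$ the forcing above $p$ is equivalent to $\pone_\nu$''), which is a different argument from the one you sketch.
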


\begin{proof}
We prove by induction on $\nu \leq \kappa$, that the above holds for every dense open set $D \subset \pone_\nu$ and $p \in \pone_\nu$. 
Suppose the claim holds for every dense open set $D \subset \pone_\nu$ and $p \in \pone_\nu$. We have 
$\pone_{\nu+1} = \pone_\nu * \name{Q_\nu}$.
If $\nu \not\in \delset'$ then $\name{Q_\nu}$ is trivial and there is nothing to prove.]
If $\nu \in \delset'$ then $\name{Q_\nu} = Q(\name{U_\nu^*})$. 
Let $D \subset \pone_{\nu+1}$ be a open dense set, and $p = p \uhr \nu \fr p_\nu \in \pone_{\nu+1}$. 
If $\nu \in \supp(p)$ then the forcing $\pone_\nu$ over $p$ is equivalent to $\pone_\nu$. \\
For every $\nu \not\in \supp(p)$ the name $p_\nu$ is a $\pone_\nu$ name of a set in $U_\nu^*$.
For every $G_\nu \subset \pone_\nu$ generic, the set $D(G_\nu) = \{(q_\nu)_{G_\nu} \mid q \in D\}$ is dense open set in $Q(U^*_\nu)$.
It follows there is some $Y_\nu \subset (p_\nu)_{G_\nu}$ with $Y_\nu \in U_\nu^*$ so that
 $\mu \in D(G_\nu)$ for every $\mu \in Y_\nu$.
Let $\name{Y_\nu}$ be a name for $Y_\nu$ in $\pone_\nu$. For every name $\tau$ of an ordinal in $\name{Y_\nu}$, the 
set $D_\tau = \{q \geq p\uhr \nu \mid q \fr \la \tau \ra \in D\}$ is dense open in $\pone_\nu$.
The inductive assumption guarantees there is some $n(\tau) < \omega$ and a structural function $f(\tau)$ of degree $n(\tau)$,
such that every structural extension of $p\uhr \nu$ by $f(\tau)$, has a direct extension in $D_\tau$. 
For every $n < \omega$, let $\name{X^n_\nu}$ be the $\pone_\nu$ name of the 
set $\{\tau \in Y_\nu \mid n(\tau) = n\}$, and 
let $\sigma^0_n$  be the $\pone_\nu$ statement
\[ \sigma^0_n :  \name{X^n_\nu} \in U_\nu^*. \]
Since $\pone_\nu$ satisfies the Prikry condition, there exists a unique $n < \omega$ and some $r \geq^* p\uhr\nu$ so that
$r \force \name{X_\nu^n} \in \name{U_\nu^*}$.  Let us denote $X_\nu^n$ by $X_\nu$. 
We conclude that for every name of an ordinal in $X_\nu$, $\tau$, there is a structural tree $f(\tau)$ of degree $n$
such that every $f(\tau)$ structural extension of $r^{+(\tau,\nu)}\ra$, has a direct extension in $D$. 
It follows that the function $f$ mapping every such name $\tau$ to $f(\tau)$, is a structural function of degree $n+1$, 
and the claim of the Lemma holds with respect to $p,f$.\\
Let $\delta \leq \kappa$ be a limit ordinal, and suppose that the claim holds in every $\pone_\nu$ for $\nu < \delta$.
Fix some $p \in \pone_\delta$ and a dense open $D \subset \pone_\delta$.
In order to prove the result it is sufficient to show that for some $\nu < \delta$ and a $\pone_\nu$ name $\name{t}$, $p\uhr\nu \force \name{t} \geq^* p\setminus \nu$ and 
$D_{\name{t}} = \{r \geq p\uhr\nu \mid r \fr \name{t} \in D\} \subset \pone_\nu$ is dense open.
Suppose otherwise, and let us construct a direct extension of $p$, $p^* = \la p^*_\nu \mid \nu < \delta\ra$ so that for every $\nu < \delta$, $p^*\uhr\nu \force \sigma^0_\nu$ where
\begin{equation}\label{Equation - I - Inductive assumption in Structural Lemma}
 \sigma^0_\nu : \forall t \geq^* (p\setminus \nu). \thinspace  t \not\in D(\name{G_\nu}).
\end{equation}
Note that the existence of $p^*$ would contradict the fact that $D$ is dense.
Suppose $p^* \uhr \nu = \la p^*_\mu \mid \mu < \nu \ra$ has been defined and satisfies
\ref{Equation - I - Inductive assumption in Structural Lemma}.
Fix a $\pone_\nu$ generic $G_\nu$ with $p^*\uhr \nu \fr p\setminus \nu \in G_\nu$,
and consider the forcing $\pone\setminus \nu = \qone_{\nu+1} * \pone\setminus(\nu+1)$. 
Since $\qone_{\nu+1}$ satisfies the Prikry property, there exists some $r_\nu \geq^* (p_\nu)_{G_\nu}$ which decides
$\sigma^0_{\nu+1}$ \footnote{considered {as} a $\qone_\nu-$statement.}.
If $r_\nu \force \neg\sigma^0_{\nu+1}$ there would be $q_{>\nu} \geq^* p\setminus (\nu+1)$ and $r^*_\nu \geq^* r_\nu$ so that
$r^*_\nu \fr q_{>\nu} \in D(G_\nu)$. 
This is impossible as $r^*_\nu \fr q_{>\nu} \geq^* p\setminus \nu$ while $\sigma^0_\nu$ holds in $V[G_\nu]$ as $p^*\uhr \nu \in G_\nu$. \\
We conclude $r_\nu$ forces $\sigma^0_{\nu+1}$ in $V[G_\nu]$. Back in $V$, let $p^*_\nu$ be a $\pone_\nu$ name for $r_\nu$, so
$p^*\uhr \nu \force p^*_\nu \geq^* p_\nu$, and $p^*\uhr\nu \fr p^*_\nu \force \sigma^0_{\nu+1}$. \\
Let $\delta' < \delta$ be a limit ordinal. Suppose $p^* \uhr \delta' = \la p^*_\mu \mid \mu < \delta'\ra$ has been constructed, and let us
show that $p^*\uhr \delta' \force \sigma^0_{\delta'}$. Otherwise, there would be conditions, $t \in \pone\setminus \delta'$ and 
$r \geq p^*\uhr\delta'$, so that $p^*\uhr\delta' \force t \geq^* p\setminus \delta'$ and $r \force t \in D(\name{G_{\delta'}})$.
Thus there exists some $r' \geq r$ so that $r' \fr t \in D$.
As $\delta'$ is a limit ordinal and $\supp(r')$ is finite it follows there is some $\nu < \delta'$ with
$r' \setminus \nu \geq^* (p^*\uhr\delta'\setminus \nu)$.
Let $s = r' \uhr \nu$, we get $(r'\setminus \nu) \fr t \geq^* p\setminus \nu$ and 
$s \force (r'\setminus \nu) \fr t \in D(\name{G_\nu})$.
This is absurd as $s \geq p^*\uhr\nu$ and must force $\sigma^0_\nu$.
\end{proof}

\begin{definition}[structural iteration and compatible conditions]\label{Definition - I - Structural Iteration and Compatible}${}$\\
For $\alpha,\beta < \lambda$  a \textit{structural iteration} 
for $\Uo{\alpha}{\beta}$ is a finite iterated ultrapower 
$\vec{M} =\la M_m , j_{k,m} \mid k < m  \leq n\ra$ of length $n < \omega$,
satisfying the following properties: \\
$M_0 = V^0$, 
\[j_{0,1} = 
\begin{cases}
\jz{\beta}{\alpha} : V^0 \to M^0_{(\beta,\alpha)} &\mbox{if }  \beta \leq \alpha \\
k^0_{\alpha,\beta} : V^0 \to M^0_{(\alpha,\beta),(\beta,\alpha)} &\mbox{if }  \alpha > \beta 
\end{cases}.\]
\noindent Define $\nu_0 = \kappa$, set 
$\nu_0^1 = j^0_{(\alpha,\beta)}(\kappa) < k^0_{\alpha,\beta}(\kappa)$ if $\alpha < \beta$, 
and leave $\nu_0^1$ undefined otherwise. 
For every $1 \leq k < n$, suppose that  $\vec{M}\uhr {k+1} = \la M_m , j_{i,m} \mid i < m \leq k\ra$ 
and $\la  (\nu_i, \nu_i^1) \mid i < k\ra$ have been defined. Then there is an ordinal $\nu_k < j_{0,k}(\kappa)$ so $\nu_k  \in j_{0,k}(\delset')\setminus (\kappa \cup \{\nu_i^1 \mid i < k \})$ and 
unique $\alpha_k,\beta_k$ with $\nu \in j_{0,k}(\delset)_{\alpha_k}(\beta_k)$\footnote{note that we must have $\alpha_k < \beta_k$ since $\nu \in j_{0,k}(\delset')$.}.
\begin{enumerate}
 \item $j_{k,k+1} : M_k \to M_{k+1} \cong \Ult(M_k,U^0_{\nu_k,{(\beta_k,\alpha_k)}})$,
  \item $\nu^1_k = j_{k,k+1}(\nu_k)$. 
\end{enumerate}
We say that a condition $p \in j_{0,n}(\pone)$ is \textit{compatible} with the structural iteration
$\vec{M}$, if there exists a sequence $\vec{p} = \la p^k \mid k \leq n\ra$ with the following properties:
\begin{enumerate}
 \item $p^0 \in \pone$ in $V^0$.
 \item 
 $p^1 = 
\begin{cases}
 j_{0,1}(p^0) = \jz{\beta}{\alpha}(p^0) &\mbox{if }  \beta \leq \alpha \\
 
  (p \fr q)^{+(\kappa,\jz{\alpha}{\beta}(\kappa))} \text{ as in Definition \ref{Definition - I - U^* iterated ultrapower} }
					 &\mbox{if }  \alpha < \beta.
\end{cases}$
 
  \item For every $1 \leq k < n$,  
  \[p^{k+1} = (p \fr q \fr (j_{k,k+1}(p)\setminus \nu^1_k))^{+(\nu_k,\nu_{k}^1)},\] 
  where
  \begin{itemize}
   \item $q \geq^* j_{k,k+1}(p)\uhr [\nu_k,\nu_k^1)$,
   \item $p \fr q \force \nu_k \in j_{k,k+1}(p_{\nu_k})= j_{k,k+1}(p)_{\nu_k^1}$. 
	  Note that the existence of such $q$ is guaranteed by Definition \ref{Definition - I - U^* single ultrapower}.
  \end{itemize}
  \item $p \geq^* p^n$.
\end{enumerate}
\end{definition}

Comparing the last definition with the definition of the iteration $\T^0$ for $\Uo{\alpha}{\beta}$, it clear that structural iterations are all finite subiterations of $\T^0$ and that the $\T^0-$resulting limit,
$\pi^0_{\alpha,\beta} : V^0 \to Z^0_{\alpha,\beta}$, is also the limit of the directed system of all structural iterations for $\Uo{\alpha}{\beta}$.\\

Before proceeding, we point out the following simple facts:
\begin{remarks}\label{Remark - III - More Structural Remarks}
Let $\vec{M} =\la M_m , j_{k,m} \mid k < m  \leq n\ra$ be a structural iteration for $\Uo{\alpha}{\beta}$.
\begin{enumerate}
 \item 
The embedding $j_{0,1}$ coincides with the ultrapower embedding used for the definition of $\Uo{\alpha}{\beta}$ in  \ref{Definition - I - U^* single ultrapower}  and \ref{Definition - I - U^* iterated ultrapower} for $\Uo{\alpha}{\beta}$.

 \item  For every $k < n - 1$, $\nu_k,\nu^1_k$ are not moved by $j_{k+1,n}$.
 \item For every $k$, $1 \leq k < n$,  $\nu_k \in j_{0,k}(\delset')$ implies that
 $\nu_k^1 \in j_{0,k+1}(\delset')$. Furthermore since $\Uz{\beta_k}{\alpha_k}$ (the measure generating $j_{k,k+1}$) does not include $j_{0,k}(\delset') \cap \nu_k$ we get that $\nu_k \not\in j_{0,k+1}(\delset')$. Similarly, we have that  
 $\kappa \not\in k^0_{\alpha,\beta}(\delset')$ when $\alpha < \beta$, and $\kappa \not\in  \jz{\beta}{\alpha}(\delset')$ when $\alpha \geq \beta$. Therefore $\nu_k \not\in j_{0,n}(\delset')$ for every $k < n$. 
 \item If $p \in j_{0,n}(\pone)$ is compatible with the iteration $\vec{M}$ which is witnessed by a sequence $\la p^i \mid i \leq n\ra$ then $p^{k+1} \force j_{0,k+1}(\name{d})(\nu^1_k) = \nu_k$  for every $k < n$.
	Both $\nu_k,\nu^1_k$ are not moved by the rest of the iteration hence
	\[ p \force j_{0,n}(\name{d})(\nu^1_k) = \nu_k \]
	whenever $\nu^1_k$ is defined.	
\item Suppose that $p_0,p_1 \in j_{0,n}(\pone)$ are compatible with $\vec{M}$. Let $\la p^k_0 \mid k\leq n\ra$, $\la p^k_1 \mid k \leq n\ra$ witnessing sequences for $p_0$, $p_1$ respectively. 
It is easy to see by induction on $k \leq n$ that if 
$p^0_0,p^0_1 \in \pone$ are compatible in $\pone$ then 
 $p^k_0,p^k_1$ are compatible in $j_{0,k}(\pone)$. 
\end{enumerate}
\end{remarks}

\begin{lemma}\label{Lemma - I - Structural Iteration and Structural Function}
Let $\vec{M_0}  =\la M_m , j_{k,m} \mid k < m  \leq n_0\ra$ be a structural iteration of length $n_0$ and 
$p \in j_{0,n_0}(\pone)$ compatible with $\vec{M_0}$. 
We have that for every structural function $f = f^n$ of degree $n$ which avoids $\supp(p) \cup \kappa$, 
there exists a structural iteration $\vec{M}  = \la M_m , j_{k,m} \mid k < m  \leq n_0 + n\ra$ of length $n_0 + n$, 
extending $\vec{M_0}$, and  $q \in j_{0,n_0+n}(\pone)$ so that
\begin{enumerate}
 \item $q$ is compatible with $\vec{M}$, and
 \item $q$ is a structural extension of $j_{n_0,n_0 + n}(p)$ by $j_{n_0,n_0 + n}(f)$.
\end{enumerate}
\end{lemma}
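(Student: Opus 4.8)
\noindent The plan is to argue by induction on the degree $n$ of $f$, running the induction in parallel with the recursions in Definitions~\ref{Definition - I - Structural Function and Extension} and~\ref{Definition - I - Structural Iteration and Compatible}. The base case $n=0$ is trivial: then $f=\emptyset$, so one takes $\vec M=\vec M_0$ and $q=p$; compatibility is the hypothesis, and $q=j_{n_0,n_0}(p)\geq^* j_{n_0,n_0}(p)$ is by definition a structural extension of $j_{n_0,n_0}(p)$ by the degree-$0$ function $\emptyset=j_{n_0,n_0}(f)$.

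Assume the lemma for degree $n$, and let $f=f^{n+1}$ have distinguished node $\nu_f$, name $\name{X_f}$, and children $f(\tau)$ of degree $n$ (with $\nu_{f(\tau)}<\nu_f$), indexed by the names $\tau$ of ordinals in $\name{X_f}$. Since $f$ avoids $\supp(p)\cup\kappa$, the coordinate $\nu_f$ is not constrained by $p$, lies in $j_{0,n_0}(\delset')$, and is above $\kappa$ (as $\kappa\notin j_{0,n_0}(\delset')$, Remarks~\ref{Remark - III - More Structural Remarks}(3)); moreover by Remarks~\ref{Remark - III - More Structural Remarks}(4) each $\nu^1_i$, $i<n_0$, is a coordinate at which $p$ has decided the Prikry value, so $\{\nu^1_i\mid i<n_0\}\subseteq\supp(p)$ and $\nu_f$ avoids it. Hence $\nu_f$ is an admissible next critical point for a one-step extension of $\vec M_0$: with $\beta'<\alpha'$ determined by $\nu_f\in j_{0,n_0}(\delset)_{\alpha'}(\beta')$, set $\nu_{n_0}=\nu_f$, let $j_{n_0,n_0+1}:M_{n_0}\to M_{n_0+1}\cong\Ult(M_{n_0},U^0_{\nu_f,(\beta',\alpha')})$ and $\nu^1_{n_0}=j_{n_0,n_0+1}(\nu_f)$; this is $\vec M_1$, of length $n_0+1$, extending $\vec M_0$. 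To keep $p$ compatible, extend its witnessing sequence $\la p^k\mid k\leq n_0\ra$ by one more term as in Definition~\ref{Definition - I - U^* single ultrapower}: pick $\bar q\geq^* j_{n_0,n_0+1}(p^{n_0})\uhr[\nu_f,\nu^1_{n_0})$ with $p^{n_0}\fr\bar q\force\nu_f\in j_{n_0,n_0+1}(p^{n_0})_{\nu^1_{n_0}}$, and set $p^{n_0+1}=(p^{n_0}\fr\bar q\fr(j_{n_0,n_0+1}(p^{n_0})\setminus\nu^1_{n_0}))^{+(\nu_f,\nu^1_{n_0})}$, so $p^{n_0+1}\geq p^{n_0}$ is compatible with $\vec M_1$. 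Now single out the name $\tau_*\in\dom(f)$ that is carried to $\nu_f$ by (the image of) $U^0_{\nu_f,(\beta',\alpha')}$ — the ``seed'' name for which this $+$-move is exactly the $+$-move appearing in the degree-$(n{+}1)$ clause of structural extension. Since $f(\tau_*)$ again avoids $\supp(p^{n_0+1})\cup\kappa$ (its nodes lie below $\nu_f$, where $p^{n_0+1}$ and $p$ agree), the induction hypothesis applied to $(\vec M_1,p^{n_0+1},f(\tau_*))$ produces a structural iteration $\vec M$ of length $(n_0+1)+n=n_0+(n+1)$ extending $\vec M_1$ (hence $\vec M_0$), and $q\in j_{0,n_0+n+1}(\pone)$ which is compatible with $\vec M$ and is a structural extension of $j_{n_0+1,n_0+n+1}(p^{n_0+1})$ by $j_{n_0+1,n_0+n+1}(f(\tau_*))$.

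To conclude, I check that $q$ is a structural extension of $j_{n_0,n_0+n+1}(p)$ by $j_{n_0,n_0+n+1}(f)$. The $n$ critical points added past $\vec M_1$ all realize nodes strictly below $\nu_f$, so $j_{n_0+1,n_0+n+1}$ fixes $\nu^1_{n_0}$ (inaccessible in $M_{n_0+1}$ and above those critical points), giving $j_{n_0,n_0+n+1}(\nu_f)=\nu^1_{n_0}$; and since $j_{n_0,n_0+1}$ has critical point $\nu_f$ it fixes $f(\tau_*)$, so with $\tau''=j_{n_0,n_0+n+1}(\tau_*)$ we get $j_{n_0,n_0+n+1}(f)(\tau'')=j_{n_0,n_0+n+1}(f(\tau_*))=j_{n_0+1,n_0+n+1}(f(\tau_*))$. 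Unwinding the degree-$(n{+}1)$ clause of Definition~\ref{Definition - I - Structural Function and Extension} with this $\tau''$ (and the direct extension below $\nu^1_{n_0}$ read off from $p^{n_0+1}$), the $+$-move it prescribes is precisely the one producing $p^{n_0+1}$ from $p^{n_0}$, and below $\nu_f$ — the only coordinates the function $f(\tau_*)$ is sensitive to — the resulting condition agrees with $j_{n_0+1,n_0+n+1}(p^{n_0+1})$; so the clause collapses to ``$q$ is a structural extension of $j_{n_0+1,n_0+n+1}(p^{n_0+1})$ by $j_{n_0+1,n_0+n+1}(f(\tau_*))$,'' which is exactly what the induction hypothesis gave.

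\noindent The step I expect to carry the weight is this last alignment: one must choose the recursion name $\tau_*$ (and its image $\tau''$) so that the $+$-move demanded by the recursive definition of ``structural extension'' literally coincides with the $+$-move the compatible witnessing sequence performs at the $\nu_f$-step, and then verify — using that $j_{n_0,n_0+1}$ has critical point $\nu_f$ while the later embeddings have critical points below $\nu_f$ but move only the coordinates $f(\tau_*)$ touches — that the conditions and the subfunction are transported in a matching way. The surrounding coordinate bookkeeping (finiteness of supports, the $\nu^1_i$ already decided by $p$, and $\nu_f\notin j_{0,n_0+n+1}(\delset')$) should be routine from Remarks~\ref{Remark - III - More Structural Remarks}.
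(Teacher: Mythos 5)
Your approach is essentially the same as the paper's: adding one critical point per step, using the root $\nu_f$ of the current structural function as the new critical point, and passing to the subfunction $f(\tau_*)$ of degree one less. You phrase it as an induction on the degree $n$ while the paper writes out the explicit $n$-step construction; these are the same argument.

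There is one genuine gap, and it sits exactly where you flag the weight of the argument. To use the recursion clause of Definition~\ref{Definition - I - Structural Function and Extension}, you need a name $\tau_*\in\dom(f)$ and a direct extension $r$ so that $r\force\tau_*\in p^{n_0}_{\nu_f}$, with $\tau_*$ being carried to $\nu_f$ by the ultrapower. Since $\dom(f)$ consists of names of ordinals in $\name{X_f}$, this requires $\bar q$ to force $\nu_f\in j_{n_0,n_0+1}(\name{X_f})$, not merely $\nu_f\in j_{n_0,n_0+1}(p^{n_0})_{\nu^1_{n_0}}$. Your $\bar q$ is chosen only for the latter, so the name $\tau_*$ you ``single out'' may simply fail to exist in $\dom(f)$ (the obstruction is that $\name{X_f}$ is a $\pone_{\nu_f}$-name, not a $V^0$-set, so $\nu_f\in j_{n_0,n_0+1}(\name{X_f})$ is not automatic from $X_f$ being forced into $U^*_{\nu_f}$; it has to be arranged via the forcing characterization of the $U^1$-measures). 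The paper handles this by choosing $q$ for the name $Y_k$ of $X_{g^{n^*-k}}\cap p^k_{\nu_k}$, which is forced to be in $U^*_{\nu_k}$; the single $q$ then simultaneously witnesses both compatibility with the iteration and membership of $\tau_*$ in $\dom(f)$. Replacing your choice of $\bar q$ with one that forces $\nu_f\in j_{n_0,n_0+1}(\name{X_f}\cap p^{n_0}_{\nu_f})$ closes the gap and the rest of your argument goes through. A secondary point: when you pass to the subfunction you write $f(\tau_*)$ where, strictly, one needs $j_{n_0,n_0+1}(f)(\tau_*)$ (the paper tracks $g^{n^*-(k+1)}=j_{k,k+1}(g^{n^*-k})(\name{\nu_k})$); your claim that $j_{n_0,n_0+1}$ fixes $f(\tau_*)$ because $\cp(j_{n_0,n_0+1})=\nu_f$ and the nodes of $f(\tau_*)$ lie below $\nu_f$ needs the same care for the name-components of $f(\tau_*)$, not just the ordinals $\nu_g$.
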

\begin{proof}
Let us denote $n_0 + n$ by $n^*$.
For every $k$, $n_0 \leq k < n^*$, we chose
$j_{k,k+1} :M_k \to M_{k+1}$, $p^{k+1} \in j_{0,k+1}(\pone)$, and $g^{n^*-k}$ of degree $n^*-k$,
so that $p^{k+1}$ is compatible with 
$\vec{M}\uhr (k+1)$, and $g^{n^*-k}$ avoids $\supp(p^{k+1}) \cup \kappa$.
Let $M_{n_0}$ be the last model in $\vec{M_0}$, $p^{n_0} = p$, and $g^{n^* - n_0} = g^n = f$.
Suppose that $\vec{M} \uhr k+1$, $p^k$, $g^{n^* -k}$ have been defined. 
Note that $\{\nu_i^1 \mid i < k\} \subset \supp(p^{k})$ and 
$\nu_k \in j_{0,k}(\delset') \setminus (\kappa \cup \supp(p^{k}))$.
Suppose that $\nu_k \in \delset_{\alpha_k}(\beta_k)$, and let 
$j_{k,k+1} : M_k \to M_{k+1} \cong \Ult(M_k,U^0_{\nu_k,(\beta_k,\alpha_k)})$
Let $X_{g^{n^* -k}}$ be the name associated with $g^{n^* -k}$, and let
$Y_k$ be a name for $X_{g^{n^* -k}} \cap p^{k}_{\nu_k}$.
Since $Y_k$ is a name of a set in $\Uo{\alpha_k}{\beta_k}$ then
by the definition of $\Uo{\alpha_k}{\beta_k}$ there is some $q \geq^* j_{k,k+1}(p^{k})\uhr [\nu_k,\nu^1_k)$, so that
$p^{k} \fr q \force \can{\nu_k} \in j_{k,k+1}(Y_k)$. 
Define $p^{k+1} =  \left(p^{k} \fr q \fr (j_{k,k+1}(p^k)\setminus \nu^1_k) \right)^{+(\nu_k,\nu^1_k)}$. \\
Let $\name{\nu_k}$ for an ordinal in $Y_k$ which is interpreted as $\can{\nu_k}$ by every condition which forces $\can{\nu_k} \in j_{k,k+1}(X_{g^{n^* -k}})$.
Clearly $\name{\nu_k} \in \dom(j_{k,k+1}(g^{n^* -k}))$ so we can define $g^{n^* - (k+1)} = j_{k,k+1}(g^{n^* -k})(\name{\nu_k})$.
The inductive hypothesis implies that $p^{k+1}$ is compatible with $\vec{M}\uhr k+2$ and that $g^{n^* - (k+1)}$ avoids $\kappa \cup \supp(p^{k+1})$. \\ 
The construction terminates after $n = n^* - n_0$ steps. We obtain an iteration  
$\vec{M}  =\la M_m , j_{k,m} \mid k < m  \leq n^* \ra$, $\la p^i \mid i \leq n^*\ra$ and  a structural function of degree $0$,
$g^{0} = g^{n^* - n^*} = \emptyset$.
For every $k \leq n^*$, $\nu_k,\nu^1_k$ are fixed by $j_{k+1,n}$, 
hence $g^0$ can also be described as follows:
\begin{enumerate}
 \item $h^{0} = j_{n_0,n^*}(f)$ is of degree $n$,
 \item $h^{i+1} = h^i(\name{\nu_{n_0 + i}})$ is of degree $n - i -1$ for all $i < n$,
 \item $g^0 = h^n$. 
\end{enumerate}
We conclude that the sequence
$\la j_{k,n^*}(p^{k}) \in j_{n_0,n^*}(\pone) \ra$ is a witness for the fact that
$q = p^{n^*} = j_{n^*,n^*}(p^{n^*})$ is a structural extension of $j_{n_0,n_0 + n}(p)$ by $j_{n_0,n_0 + n}(f)$. 
\end{proof}

\noindent The following concludes the findings of Lemmata \ref{Lemma - I - Structural Lemma} and \ref{Lemma - I - Structural Iteration and Structural Function}.
\begin{corollary}\label{Corollary - I - Main Structural}
Let $\vec{M_0}$ be a structural iteration of length $n_0$, and
$D$ be a $\pone-$name of a dense open set in $j_{0,n_0}(\pone)\setminus \kappa$.
For every $\vec{M_0}$ compatible condition $p \in j_{0,n_0}(\pone)$ 
there is a structural iteration $\vec{M}$ extending $\vec{M_0}$
and a $\vec{M}$ compatible condition $q \in j_{0,n^*}(\pone)$ so that 
\begin{enumerate}
 \item $q \geq j_{n_0,n^*}(p)$ (here $n^* = |\vec{M}|$),
 \item $q\uhr\kappa = j_{n_0,n^*}(p)\uhr\kappa = p\uhr\kappa$, and
 \item $q \uhr \kappa \force (q \setminus \kappa) \in D$. 
\end{enumerate}
\end{corollary}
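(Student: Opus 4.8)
The plan is to string together Lemmata \ref{Lemma - I - Structural Lemma} and \ref{Lemma - I - Structural Iteration and Structural Function}: the first will manufacture a structural function that absorbs the combinatorics of $D$, and the second will turn it into a genuine structural iteration extending $\vec{M_0}$ together with a compatible condition landing in $D$.

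First I would work inside the final iterand $M_{n_0}$ of $\vec{M_0}$. Being a finite iterated ultrapower of $V^0$, $M_{n_0}$ satisfies Lemma \ref{Lemma - I - Structural Lemma} with $\pone$ replaced by $j_{0,n_0}(\pone)$. By Remark \ref{Remark - III - More Structural Remarks}(3), $\kappa \notin j_{0,n_0}(\delset')$, so stage $\kappa$ of $j_{0,n_0}(\pone)$ is trivial; moreover every critical point of $\vec{M_0}$ is $\geq\kappa$, so $j_{0,n_0}(\pone)\uhr\kappa$ is simply $\pone_\kappa$ and is fixed by the iteration, while $D$ names a dense open subset of the tail $j_{0,n_0}(\pone)\setminus\kappa$. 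Applying Lemma \ref{Lemma - I - Structural Lemma} in the version for the tail forcing -- proved by the same induction, now over the stages in $[\kappa,j_{0,n_0}(\kappa)]$ -- to $D$ and the tail part of $p$ yields a structural function $f$ all of whose nodes exceed $\kappa$ and which avoids $\supp(p)$; thus $f$ avoids $\supp(p)\cup\kappa$, and every structural extension of $p$ by $f$ has a direct extension, acting only at stages $\geq\kappa$, that lies in $D$.

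Next I would feed $\vec{M_0}$, the $\vec{M_0}$-compatible condition $p$, and this $f$ into Lemma \ref{Lemma - I - Structural Iteration and Structural Function}. This returns a structural iteration $\vec{M}$ of length $n^{*} = n_0 + n$ (with $n$ the degree of $f$) extending $\vec{M_0}$, together with a $\vec{M}$-compatible $q_0 \in j_{0,n^{*}}(\pone)$ which is a structural extension of $j_{n_0,n^{*}}(p)$ by $j_{n_0,n^{*}}(f)$. Applying $j_{n_0,n^{*}}$ to the conclusion of the previous paragraph -- and writing, as the corollary tacitly does, $D$ also for its image $j_{n_0,n^{*}}(D)$, which travels along the iteration -- every structural extension of $j_{n_0,n^{*}}(p)$ by $j_{n_0,n^{*}}(f)$ has a direct extension in $D$; in particular the tail part of $q_0$ has a direct extension $r \geq^{*} q_0\setminus\kappa$ with $r \in D$. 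Put $q = (q_0\uhr\kappa)\fr r$, so that $q \geq^{*} q_0$ with the extension acting only above $\kappa$. Since the definition of $\vec{M}$-compatibility (Definition \ref{Definition - I - Structural Iteration and Compatible}) constrains a condition only through its last clause $q \geq^{*} p^{n^{*}}$, and $q \geq^{*} q_0 \geq^{*} p^{n^{*}}$, the condition $q$ is compatible with $\vec{M}$ via the same witnessing sequence.

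Finally I would check the three clauses for $q$. Clause $(1)$: a routine induction on the degree of a structural function -- using the observation in Definition \ref{Definition - I - Structural Function and Extension} that $r\fr(p\setminus\nu_f)^{+(\tau,\nu_f)} \geq p$ -- shows that a structural extension of a condition extends it, so $q \geq^{*} q_0 \geq j_{n_0,n^{*}}(p)$. Clause $(2)$: the critical points of $j_{n_0,n^{*}}$ exceed $\kappa$, hence $\pone_\kappa$ and $p\uhr\kappa$ are fixed; since $j_{n_0,n^{*}}(f)$ avoids $\kappa$ and the final direct extension acts only above $\kappa$, one gets $q\uhr\kappa = j_{n_0,n^{*}}(p)\uhr\kappa = p\uhr\kappa$. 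Clause $(3)$ is precisely the membership $q\setminus\kappa \in D$ obtained above, read as ``$q\uhr\kappa \force (q\setminus\kappa)\in D$'' once the name $D$ is interpreted. There is no new idea beyond the two lemmas; the delicate points, which I expect to be the real work, are that the structural function furnished by Lemma \ref{Lemma - I - Structural Lemma} for the tail forcing can be taken to avoid $\kappa$, the bookkeeping of how $j_{n_0,n^{*}}$ acts on the $\pone$-name $D$, and the fact that passing to the last direct extension preserves both $\vec{M}$-compatibility and the part of the condition below $\kappa$.
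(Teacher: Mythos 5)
Your proof follows exactly the route the paper intends: the paper itself offers no argument for this corollary beyond the single remark that it ``concludes the findings of Lemmata \ref{Lemma - I - Structural Lemma} and \ref{Lemma - I - Structural Iteration and Structural Function},'' and your write-up is a careful elaboration of precisely that chaining --- apply a tail version of Lemma \ref{Lemma - I - Structural Lemma} to obtain a structural function $f$ with nodes above $\kappa$ and avoiding $\supp(p)$, feed $f$ into Lemma \ref{Lemma - I - Structural Iteration and Structural Function} to build $\vec{M}$ and the compatible $q_0$, and then pass to the direct extension of the tail of $q_0$ landing in $D$. The details you check (that $\kappa \notin j_{0,n_0}(\delset')$ so $f$ can be taken to avoid $\kappa$; that the final $\geq^*$-extension acts only above $\kappa$, preserving $q\uhr\kappa = p\uhr\kappa$ and $\vec{M}$-compatibility via the same witnessing sequence; that $D$ must be read as its image under $j_{n_0,n^*}$) are exactly the points the paper leaves implicit, and your flags on the genuinely delicate items --- in particular that the tail version of Lemma \ref{Lemma - I - Structural Lemma} over $[\kappa, j_{0,n_0}(\kappa))$ must be re-proved from scratch, tracking the $\pone$-name $D$ through the induction rather than a fixed dense open set --- accurately locate where the real work lies if one wanted a fully self-contained argument.
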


\subsection{A proof for Proposition \ref{Proposition - I - identify the ultrapower restriction of Uo alpha beta}}
Let $\alpha,\beta < \lambda$ and $\T^0 = \la Z^0_i, \sigma^0_{i,j} \mid 0 \leq i < j < \theta\ra$ be the iteration associated
with $\Uo{\alpha}{\beta}$ (Definition \ref{Definition - I - pi0  alpha beta}). and let $\pi^0_{\alpha,\beta} : V^0 \to Z^0_{\alpha,\beta}$ be the resulting  elementary embedding. $\pi^0_{\alpha,\beta}$ is also the limit of the directed system which consists of all structural iterations of $\Uo{\alpha}{\beta}$.

 \begin{definition}${}$
 \begin{enumerate}
  \item  For every structural iteration  $\vec{M} = \la M_k, j_{k,m} \mid k \leq m \leq n\ra$ with respect to $\Uo{\alpha}{\beta}$,
 let $j_{\vec{M}} : V^0 \to M_{n}$ denote $j_{0,n}$ and $k_{\vec{M}} : M_{n} \to Z^0_{\alpha,\beta}$ denote 
 the direct limit embedding of $M_{n}$ in $Z^0_{\alpha,\beta}$. \
 \item 
 Let $\Gone \subset \pone$ be a $V^0$ generic. We say that a condition $p \in j_{\vec{M}}(\pone)$ is compatible with 
 both $\vec{M}$ and $\Gone$ if there is a witnessing sequence $\la p^k \mid k \leq n\ra$ so that $p^0 \in \Gone$. 
 \item Let $F_{\vec{M},\Gone} \subset j_{\vec{M}}(\pone)$
 be the set of all the conditions  $p \in j_{\vec{M}}(\pone)$ which are compatible with $\vec{M}$ and $\Gone$. 
 \end{enumerate}
 \end{definition}
 
\begin{proof}[(Proof. \textbf{Proposition \ref{Proposition - I - identify the ultrapower restriction of Uo alpha beta}})]${}$
Define $\Gone_{\alpha,\beta} \subset \pi^0_{\alpha,\beta}(\pone)$,
\[ \Gone_{\alpha,\beta} = \bigcup \{ k_{\vec{M}}``F_{\vec{M},\Gone} \mid \vec{M} \text{ is a structural iteration }\}. \]
It is clear $\pi^0_{\alpha,\beta}``\Gone \subset \Gone_{\alpha,\beta}$.
Furthermore, the last remark in \ref{Remark - III - More Structural Remarks} implies that every two conditions in $\Gone_{\alpha,\beta}$ are compatible. \\
We show that $\Gone_{\alpha,\beta}$ is generic over $Z^0_{\alpha,\beta}$.
Clearly, $\Gone_{\alpha,\beta}\uhr \kappa = \Gone$ is $\pone = \pi^0_{\alpha,\beta}(\pone)\uhr\kappa$
generic over $Z^0_{\alpha,\beta}$.
Let $D'$ be a $\pone$ name for a dense open set in $\pi^0_{\alpha,\beta}(\pone)\setminus \kappa$. 
Let $\vec{M_0}$ be a structural iteration for which there is $D \subset j_{\vec{M_0}}(\pone)$ such that $k_{\vec{M}}(D) = D'$.
Fix a condition $p \in F_{\vec{M_0},\Gone}$. 
By Corollary \ref{Corollary - I - Main Structural}, there is a structural iteration $\vec{M}$ extending
$\vec{M_0}$ and a compatible condition $q \in j_{\vec{M}}(\pone)$, so that
$q\uhr\kappa = p\uhr\kappa$ and $q\uhr\kappa \force q\setminus\kappa \in D$.
This implies that $q \in F_{\vec{M},\Gone}$, which in turn, implies that
$q' = k_{\vec{M}}(q) \in D' \cap \Gone_{\alpha,\beta}$. 
It follows $\Gone_{\alpha,\beta} \cap D \neq \emptyset$. \\
We can therefore extend $\pi^0_{\alpha,\beta} : V^0 \to Z^0_{\alpha,\beta}$
to an elementary embedding $\po^1_{\alpha,\beta} : V^0[\Gone] \to Z^0_{\alpha,\beta}[\Gone_{\alpha,\beta}]$ so that for every set
$x = (\name{x})_{\Gone}$,
$\pi^1_{\alpha,\beta}(x) = (\pi^0_{\alpha,\beta}(\name{x}))_{\Gone_{\alpha,\beta}}$.\\
Let $U$ denote the the normal measure on $\kappa$ in $V^0[\Gone]$ defined
by $X \in U$ if $\kappa \in \pi^1_{\alpha,\beta}(X)$. We first show that $U = \Uo{\alpha}{\beta}$, and then prove that 
$\pi^1_{\alpha,\beta}$ coincides with the ultrapower embedding of $U$.
Let $X \in \Uo{\alpha}{\beta}$. By the definition of $\Uo{\alpha}{\beta}$
(\ref{Definition - I - U^* single ultrapower} and \ref{Definition - I - U^* iterated ultrapower}) there is a $\Gone$ name $\name{X}$ for $X$ and a $j_{9,1}$ compatible condition $t \in j_{0,1}``\Gone$  so that 
$t \force_{j_{0,1}(\pone)} \can{\kappa} \in j_{0,1}(\name{X})$.
It follows that $t$ is compatible with $\Gone$, i.e., $t \in F_{Z^0_{1},\Gone}$. Let $k : Z^0_1 \to Z^0_{\alpha,\beta}$ be the direct limit embedding of the iteration. We get that $k(t) \in \Gone_{\alpha,\beta}$.
As $k$ does not move $\kappa$ (the iteration $\T$ after $Z^0_1$ is above $\kappa$) it follows that $k(t) \force \can{\kappa} \in \pi^0_{\alpha,\beta}(\name{X})$ thus $X \in U$. We conclude that $\Uo{\alpha}{\beta} \subseteq U$. $U = \Uo{\alpha}{\beta}$ as both are ultrafilters. \\

\noindent It follows the ultrapower embedding of $V^1$ by $U$ is 
$j^1_{(\alpha,\beta)} : V^1 \to M^1_{(\alpha,\beta)} \cong \Ult(V^1,\Uo{\alpha}{\beta})$ and that $\pi^1_{\alpha,\beta}$ can be factored into
$e^1_{\alpha,\beta} \circ j^1_{(\alpha,\beta)}$, where
$e^1_{\alpha,\beta} : M^1_{\alpha,\beta} \to Z^0_{\alpha,\beta}[\Gone_{\alpha,\beta}]$ maps $[f]_{\Uo{\alpha}{\beta}}$ to  $\pi^1_{\alpha,\beta}(f)(\kappa)$.
Therefore in order to show $(j^1_{(\alpha,\beta)},M^1_{\alpha,\beta}) = (\pi^1_{\alpha,\beta},Z^0_{\alpha,\beta}[\Gone_{\alpha,\beta}])$ it suffices to prove $e^1_{\alpha,\beta}$ is surjective. 
Suppose $x \in Z^0_{\alpha,\beta}[\Gone_{\alpha,\beta}]$ and let $\name{x}$ be a 
$\pi^1_{\alpha,\beta}(\pone)$ name for $x$,i.e., 
$x = (\name{x})_{\Gone_{\alpha,\beta}}$. Since $\Gone_{\alpha,\beta} = \pi^1_{\alpha,\beta}(\Gone) \in \rng(e^1_{\alpha,\beta})$. Let us show $\name{x} \in \rng(e^1_{\alpha,\beta})$.
To this end, $\name{x} \in Z^0_{\alpha,\beta}$ implies 
there is a structural iteration $\vec{M}$ and a $j_{\vec{M}}(\pone)$ name $\name{y}$ so that
$\name{x} = k_{\vec{M}}(\name{y})$. 
Let $\la \nu_k \mid k < n\ra$ be the list of critical points of 
$\vec{M}  =\la M_k , j_{k,m} \mid k < m  \leq n \ra$. 
Thus $\name{y} = j_{0,n}(h)(\nu_0,\dots,\nu_{n-1})$ for some $h : \kappa^n \to \in V^0$ in $V^0$.
For every $k < n$ let $i_k < \theta$ such that $\nu_{i_k} = k_{\vec{M}}(\nu_k)$.
By applying $k_{\vec{M}}$ we get $\name{x} = \pi^0_{\alpha,\beta}(h)(\nu_{i_0},\dots,\nu_{i_k})$.
It remains to show $\nu_{i_m} \in \rng(e^1_{\alpha,\beta})$ for each 
$m <n$. This is proved by induction.
The case $m = 0$ is trivial as $\nu_0 = \kappa$ and $k_{\vec{M}}(\kappa) = \kappa$. 
Let $0 < m < n$ and suppose that the claim holds for every $m' < m$. $\nu_m$ is the critical point of the $m-$stage of the iteration $\vec{M}$. As a member of $M_m$ (the $m-$th iterand in $\vec{M}$) we can write
$\nu_m = j_{0,m}(h)(\nu_0,\dots,\nu_{m-1})$ in $M_m$, where
$h : \kappa^{m} \to V^0$ belongs to $V^0$.
By applying $j_{m,m+1}$ we get
$\nu^1_m = j_{0,m+1}(h)(\nu_0,\dots,\nu_{m-1})$ in $M_{m+1}$. As
$j_{m+1,n}$ does not move $\nu_0,\dots,\nu_{m-1},\nu_m,\nu^1_m$,  
$\nu^1_m = j_{0,n}(h)(\nu_0,\dots,\nu_{m-1})$ in $M_n$.
Now for every $p \in F_{\vec{M},\Gone}$ we have
$p \force \can{\nu_m} = j_{0,n}(\name{d})(\can{\nu^1_m}) = j_{0,m}(\name{h'})(\nu_0,\dots,\nu_{m-1})$, 
where $\name{d}$ is the name of the generic Prikry function and $h' = d \circ h$.
It follows that if $q = k_{\vec{M}}(p)$ then $q \in \Gone_{\alpha,\beta}$ and
$q \force \can{\nu_{i_m}} = \pi^0_{\alpha,\beta}(h')(\nu_{i_0},\dots,\nu_{i_{m-1}})$. 
The result is therefore a consequence of the inductive assumption for $\nu_{i_0},\dots,\nu_{i_{m-1}}$.
\end{proof}

\begin{corollary}[$j^1_{\alpha,\beta}\uhr V$]\label{Corollary - I - Restriction of pi0 to V}
The restriction $j^1_{\alpha,\beta}\uhr V$ results from the following iteration $\T = \la Z_i, \sigma_{i,j} \mid 0 \leq i < j < \theta\ra$:
\begin{enumerate}
 \item  $Z_0 = V$. For $\sigma_{0,1} = \sigma^0_{0,1}\uhr V : Z_0 \to Z_1$, i.e., 
     \[\sigma_{0,1} = 
      \begin{cases}       
	\jz{\beta}{\alpha}\uhr V = j_\beta &\mbox{if }  \alpha \geq \beta \\
	k^0_{\alpha,\beta} \uhr V =  \jdz{\alpha}{\beta}{\beta}{\alpha} \uhr V = j_{\alpha,\beta}
		&\mbox{if }  \alpha > \beta 
      \end{cases}\]
      
\item Given $\T\uhr i$ and $\sigma_{j,i} : Z_j \to Z_i$ for $j < i$, so that
      $Z_j = \K(Z^0_j)$ (i.e., the core of $Z_j$), and $\sigma_{j,i} = \sigma^0_{j,i}\uhr Z_j$,
      we have 
      \[\sigma_{i,i+1} = j^0_{\nu_i, (\beta_i,\alpha_i)}\uhr Z_i = j_{\nu_i,\beta_i} : Z_i \to Z_{i+1}.\]
\end{enumerate}
\end{corollary}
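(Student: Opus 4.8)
The plan is to deduce this from Proposition \ref{Proposition - I - identify the ultrapower restriction of Uo alpha beta}, which already identifies $\jo{\alpha}{\beta}\uhr V^0$ with the limit $\pi^0_{\alpha,\beta}$ of the iteration $\T^0$. Restricting further to $V = \K(V^0)$, it suffices to track what each stage of $\T^0$ does to the core. Concretely, I would prove by induction on $i \leq \theta$ that (i) $\K(Z^0_i) = Z_i$ with $Z_i$ as in the statement; (ii) $\sigma^0_{j,i}\uhr Z_j = \sigma_{j,i}$ for all $j < i$; and (iii) $Z^0_i$ is a generic extension of $Z_i$ by a $\sigma_{0,i}(\pzero)$-generic filter $G^0_i$ over $Z_i$. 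Clause (iii) is the device that keeps the Friedman--Magidor analysis of normal measures available at every iterand.

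For $i = 1$ I would use that $\sigma^0_{0,1}$ is the ultrapower $\jz{\beta}{\alpha}$ by $\Uz{\beta}{\alpha}$ when $\alpha \geq \beta$, and the iterated ultrapower $k^0_{\alpha,\beta} = \jdz{\alpha}{\beta}{\beta}{\alpha}$ when $\alpha < \beta$. Since $\Uz{\beta}{\alpha}$ is the Friedman--Magidor extension of $U_\beta$ to $V^0 = V[\Gzero]$, Fact \ref{Fact - I - Friedman Magidor}(3a) gives $\jz{\beta}{\alpha}\uhr V = j_\beta$ and Fact \ref{Fact - I - Friedman Magidor}(3b) gives $\K(\Mz{\beta}{\alpha}) = M_\beta$ and exhibits $\Mz{\beta}{\alpha}$ as a $j_\beta(\pzero)$-generic extension of $M_\beta$. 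Applying the same facts once more inside $\Mz{\alpha}{\beta}$ --- whose second ultrapower is by the Friedman--Magidor extension of $j_\alpha(U_\beta)$ --- yields, in the case $\alpha < \beta$, $k^0_{\alpha,\beta}\uhr V = \jdz{\alpha}{\beta}{\beta}{\alpha}\uhr V = j_{\alpha,\beta}$ (matching the list following Notation \ref{Notations - j0 embeddings}) and $\K(N^0_{\alpha,\beta}) = M_{\alpha,\beta}$ with the required generic structure. This establishes (i)--(iii) at $i = 1$.

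For the successor step, assuming (i)--(iii) at stage $i$: $\sigma^0_{i,i+1} = j^0_{\nu_i,(\beta_i,\alpha_i)}$ is the ultrapower of $Z^0_i$ by $U^0_{\nu_i,(\beta_i,\alpha_i)}$; using (iii), the elementarity of $\sigma_{0,i}$, and the remark closing Subsection \ref{SubSection - I - pzero} (that the Friedman--Magidor description of normal measures applies to every measurable below the image of $\kappa$), this measure is the Friedman--Magidor extension of the core measure $U^{Z_i}_{\nu_i,\beta_i}$ (Mitchell position $\beta_i$) to $Z^0_i = Z_i[G^0_i]$. The relativized form of Fact \ref{Fact - I - Friedman Magidor}(3) at $\nu_i$ then gives $j^0_{\nu_i,(\beta_i,\alpha_i)}\uhr Z_i = j_{\nu_i,\beta_i}$, the ultrapower of $Z_i$ by $U^{Z_i}_{\nu_i,\beta_i}$, together with $\K(Z^0_{i+1}) = \Ult(Z_i,U^{Z_i}_{\nu_i,\beta_i}) = Z_{i+1}$, with $Z^0_{i+1}$ a $\sigma_{0,i+1}(\pzero)$-generic extension of $Z_{i+1}$; composing with the inductive hypothesis gives (ii) at $i+1$. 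At a limit $\delta$, $Z^0_\delta$ is the direct limit of $\la Z^0_i \mid i < \delta\ra$, and since direct limits of iterations of Mitchell models $L[\U]$ commute with passing to the core (see \cite{zeman}), $\K(Z^0_\delta)$ is the direct limit of $\la Z_i \mid i < \delta\ra$ and the embeddings restrict appropriately; (iii) passes to the limit in the same way. Setting $i = \theta$ identifies $\jo{\alpha}{\beta}\uhr V = \pi^0_{\alpha,\beta}\uhr V$ with the limit of $\T$, which gives both clauses of the statement.

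The main obstacle is clause (iii): one has to know that each intermediate model $Z^0_i$ of $\T^0$ is genuinely a set-generic extension of its own core by the relevant image of $\pzero$, so that the Friedman--Magidor machinery --- and its relativizations to measurables $\nu_i < \sigma_{0,i}(\kappa)$ inside $Z^0_i$ --- can be applied at every stage. This leans on the structure theory of iterated ultrapowers of $L[\U]$ from \cite{zeman} together with the persistence of the form $M_U[G^0_{U(\eta)}]$ under further ultrapowers by Friedman--Magidor extensions. Once (iii) is in hand, reading off the iterands $\sigma_{i,i+1} = j_{\nu_i,\beta_i}$ and the models $Z_{i+1} = \K(Z^0_{i+1})$ is immediate from Fact \ref{Fact - I - Friedman Magidor}(3a,3b).
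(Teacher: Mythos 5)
The paper states this as an unproved corollary of Proposition \ref{Proposition - I - identify the ultrapower restriction of Uo alpha beta}, and your induction is exactly the argument it leaves implicit: the base case $\sigma_{0,1}\uhr V$ is read off directly from the list of restrictions following Notation \ref{Notations - j0 embeddings} (which itself encodes Fact \ref{Fact - I - Friedman Magidor}(3)), and the successor step is the elementary transfer of that same Friedman--Magidor description to $\nu_i < \sigma_{0,i}(\kappa)$ inside $Z^0_i = Z_i[G^0_i]$, using the observation at the end of Subsection \ref{SubSection - I - pzero} that the analysis applies at every inaccessible stage. Your clause (iii) is the right invariant to carry, and your appeal to \cite{zeman} at limits and for $\K(Z^0_i) = Z_i$ matches the paper's own reliance on those inner-model facts at the start of Section \ref{Section - I - description of j}; this is a correct fleshing-out of the intended proof.
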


\section{The Structure of $\mo(\kappa)$  in $V^1$}\label{Section - I - mo structure}
We prove that in $V^1$, the restriction of $\mo$ to $\{\Uo{\alpha}{\beta} \mid \alpha \leq \beta < \lambda\}$ is isomorphic to $R_\lambda$.  
\begin{proposition}\label{Proposition - MO structure on U1}
Suppose that $\alpha' \leq \beta' $, $\alpha \leq \beta$ are ordinals below $\lambda = o(\kappa)$.
In $V^0[\Gone]$, $\Uo{\alpha'}{\beta'} \mo \Uo{\alpha}{\beta}$  if and only if
$\beta' < \alpha$. 
\end{proposition}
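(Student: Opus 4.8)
The statement is equivalent to $\Uo{\alpha'}{\beta'}\in\Mo{\alpha}{\beta}$, where $\Mo{\alpha}{\beta}=\Ult(V^1,\Uo{\alpha}{\beta})$, so the plan is to read off which normal measures on $\kappa$ belong to $\Mo{\alpha}{\beta}$ from the structural description of $\Mo{\alpha}{\beta}$ given by Proposition~\ref{Proposition - I - identify the ultrapower restriction of Uo alpha beta} and Corollary~\ref{Corollary - I - Restriction of pi0 to V}: $\Mo{\alpha}{\beta}=Z^0_{\alpha,\beta}[\Gone_{\alpha,\beta}]$, and $\K(\Mo{\alpha}{\beta})=Z_{\alpha,\beta}$ is the iterate of $V$ along $\T$ whose first step is $j_\alpha$ (if $\alpha=\beta$) or $j_{\alpha,\beta}$ (if $\alpha<\beta$) and all of whose subsequent critical points lie strictly above $\kappa$. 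I would record three consequences. (1) Since $\T$ has first critical point $\kappa$ and the stage-$\kappa$ Prikry forcing of $\jo{\alpha}{\beta}(\pone)$ is trivial for every $\Uo{\alpha}{\beta}$ with $\alpha\le\beta$, we get $\jo{\alpha}{\beta}(\pzero)\uhr(\kappa+1)=\pzero$ and $\jo{\alpha}{\beta}(\pone)\uhr\kappa=\pone$, hence (using Fact~\ref{Fact - I - Friedman Magidor}(3c)) $\pzero,\pone,\Gzero,\Gone\in\Mo{\alpha}{\beta}$. (2) A normal-measure ultrapower preserves $\power(\kappa)$ and $H_{\kappa^+}$, and ultrapowers of critical point $>\kappa$ fix $V_{\kappa+2}$ and $H_{\kappa^+}$; so $\power(\kappa)$, $H_{\kappa^+}$, and in particular every $\pone$-name for a subset of $\kappa$, coincide in $V^0$, in $Z^0_{\alpha,\beta}$, and in $\Mo{\alpha}{\beta}$, and moreover $\power(\kappa)^{Z_{\alpha,\beta}}=\power(\kappa)^{V}$. (3) As $o^{M_\alpha}(\kappa)=o^{M_{\alpha,\beta}}(\kappa)=\alpha$ and later critical points exceed $\kappa$, $o^{Z_{\alpha,\beta}}(\kappa)=\alpha$ and the normal measures of $V$ on $\kappa$ that are elements of $Z_{\alpha,\beta}$ are exactly $\{U_\gamma\mid\gamma<\alpha\}$.

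For the direction $\beta'<\alpha\Rightarrow\Uo{\alpha'}{\beta'}\mo\Uo{\alpha}{\beta}$: for $\alpha'\le\beta'$ the measure $\Uo{\alpha'}{\beta'}$ is defined by Definition~\ref{Definition - I - U^* single ultrapower} (when $\alpha'=\beta'$) from $\Gone$ and $\jz{\alpha'}{\alpha'}$, or by Definition~\ref{Definition - I - U^* iterated ultrapower} (when $\alpha'<\beta'$) from $\Gone$ and $k^0_{\alpha',\beta'}$; here $\jz{\alpha'}{\alpha'}$ is determined by $U_{\alpha'}$ and $\Gzero$ through the Friedman--Magidor recipe, and $k^0_{\alpha',\beta'}$ by $U_{\alpha'}$, $U_{\beta'}$ and $\Gzero$. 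Since $\alpha'\le\beta'<\alpha$, consequence (3) puts $U_{\alpha'},U_{\beta'}\in Z_{\alpha,\beta}\subseteq\Mo{\alpha}{\beta}$, and with (1) all of this data lies in $\Mo{\alpha}{\beta}$; since by (2) the relevant Definition quantifies only over objects — conditions of $\pone$, $\pone$-names for subsets of $\kappa$, and pieces of the relevant finite iterated ultrapowers restricted to $H_{\kappa^+}$ — common to $V^1$ and $\Mo{\alpha}{\beta}$, it is absolute, and its evaluation inside $\Mo{\alpha}{\beta}$ returns exactly the set $\Uo{\alpha'}{\beta'}$. Hence $\Uo{\alpha'}{\beta'}\in\Mo{\alpha}{\beta}$.

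For the converse, assume $\Uo{\alpha'}{\beta'}\in\Mo{\alpha}{\beta}$. Since $\K$ is definable, $\power(\kappa)^{\K(\Mo{\alpha}{\beta})}$ is a set of $\Mo{\alpha}{\beta}$, so the underlying $V$-measure $U_{\alpha'}=\Uo{\alpha'}{\beta'}\cap\power(\kappa)^{\K(\Mo{\alpha}{\beta})}$ is a set of $\Mo{\alpha}{\beta}$; it is a normal ultrafilter on $\power(\kappa)^{\K(\Mo{\alpha}{\beta})}$ inside $\Mo{\alpha}{\beta}$, hence lies on the sequence of $Z_{\alpha,\beta}=\K(\Mo{\alpha}{\beta})$ (cf.\ \cite{zeman}), so $\alpha'<\alpha$ by (3). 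If $\alpha'=\beta'$ this is the desired $\beta'<\alpha$. If $\alpha'<\beta'$, I would form $P=\Ult(\Mo{\alpha}{\beta},\Uo{\alpha'}{\beta'})$ with embedding $j$: by the core-model theory $j\uhr Z_{\alpha,\beta}$ is an iterated ultrapower of $Z_{\alpha,\beta}$ whose first step is the ultrapower by $U_{\alpha'}$, sending $\kappa$ to some $\kappa_1=j_{\alpha'}(\kappa)$ with $o(\kappa_1)=j_{\alpha'}(o^{Z_{\alpha,\beta}}(\kappa))=j_{\alpha'}(\alpha)=\alpha$. Working inside $\Mo{\alpha}{\beta}$, the measure $\Uo{\alpha'}{\beta'}$ agrees, on $\pone$-names for subsets of $\kappa$, with the output of Definition~\ref{Definition - I - U^* iterated ultrapower} performed over the intermediate model $Z^0_{\alpha,\beta}$, so Corollary~\ref{Corollary - I - Restriction of pi0 to V} applied over $Z_{\alpha,\beta}$ gives that the second step of $j\uhr Z_{\alpha,\beta}$ is the ultrapower at $\kappa_1=j_{\alpha'}(\kappa)$ by the measure of Mitchell index $\beta'$ there (namely $j_{\alpha'}(U_{\beta'})$). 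Such a measure exists only if $\beta'<o(\kappa_1)=\alpha$; hence $\beta'<\alpha$.

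The genuine work, and the step I would be most careful about, is this last point: one must show that the element $\Uo{\alpha'}{\beta'}$ of $\Mo{\alpha}{\beta}$ is, internally to $\Mo{\alpha}{\beta}$, exactly the measure built by Definition~\ref{Definition - I - U^* iterated ultrapower} over $Z^0_{\alpha,\beta}$ — equivalently, that the second-step measure of $j\uhr Z_{\alpha,\beta}$ really carries Mitchell index $\beta'$ — which is precisely where the hypothesis $\alpha'<\beta'$ (it forces the construction to invoke $U_{\beta'}$) meets the identity $o^{Z_{\alpha,\beta}}(\kappa)=\alpha$, and which rests on the $H_{\kappa^+}$-absoluteness in (2). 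A cleaner alternative would be to transport the separating set $\Delta_{\alpha'}(\beta')\cap\Gamma\in\Uo{\alpha'}{\beta'}$ through $j$: from $\kappa\in j(\Delta_{\alpha'}(\beta')\cap\Gamma)$ one reads that in $\K(P)$ the ordinal $\kappa$ is the Prikry point attached to the stage $\kappa_1=j_{\alpha'}(\kappa)$, whose forcing is driven by the index-$\beta'$ measure on $\kappa_1$, again forcing $\beta'<\alpha$.
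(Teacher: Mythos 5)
Your proof is correct and follows essentially the same route as the paper. For the forward direction, both arguments rest on the observation that once $\alpha' \le \beta' < \alpha$, the data required to compute $\Uo{\alpha'}{\beta'}$ (namely $\Gzero$, $\Gone$, and the underlying $V$-measures $U_{\alpha'}$, $U_{\beta'}$) all live in $Z^0_{\alpha,\beta}[\Gone_{\alpha,\beta}] = \Mo{\alpha}{\beta}$, so Definitions~\ref{Definition - I - U^* single ultrapower}/\ref{Definition - I - U^* iterated ultrapower} can be run internally; your items (1)--(3) just make this data-tracking explicit. For the converse, you reach $\alpha' < \alpha$ via the core-model fact that the restriction $\Uo{\alpha'}{\beta'}\cap\power(\kappa)^{Z_{\alpha,\beta}}=U_{\alpha'}$ must appear on the $\K(\Mo{\alpha}{\beta})$-sequence, where the paper instead reads this off from the agreement of the two normal iterations $T$ and $T'$ of $V$ resp.\ $Z_{\alpha,\beta}$; these are two presentations of the same inner-model input. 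The step you rightly flag as the crux — that for $\alpha'<\beta'$ the second critical point $j_{\alpha'}(\kappa)$ of $j'_{\alpha',\beta'}\uhr Z_{\alpha,\beta}$ is hit by $j_{\alpha'}(U_{\beta'})$, whose Mitchell index must be below $o^{N'}(j_{\alpha'}(\kappa))=j_{\alpha'}(\alpha)$ — is exactly the argument the paper carries out by comparing $T$ and $T'$ through the $V_{j^1_{\alpha',\beta'}(\kappa)}$-agreement of $\Mo{\alpha'}{\beta'}$ and $\Ult(\Mo{\alpha}{\beta},\Uo{\alpha'}{\beta'})$. Your alternative closing remark (pushing the separating set $\Delta_{\alpha'}(\beta')\cap\Gamma$ through the ultrapower) is a nice informal sanity check but would still need the iteration analysis to turn into a proof, so it is not really a shortcut around that step.
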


Note that while $V^1$ contains normal measures $\Uo{\alpha}{\beta}$ for $\alpha > \beta$, Proposition \ref{Proposition - MO structure on U1} refers only to $\Uo{\alpha}{\beta}$ for $\alpha \leq \beta < \lambda$.
These additional measures are omitted from Proposition \ref{Proposition - MO structure on U1} since they do not add any essential structure to $\mo(\kappa)^{V^1}$.
More precisely, the proof of Proposition \ref{Proposition - MO structure on U1}
shows that for every $\alpha > \beta$, $\Uo{\alpha}{\beta}$ is 
equivalent to $\Uo{\alpha}{\alpha}$ in the order $\mo(\kappa)\uhr \vec{U^1}$, 
where $\vec{U^1} = \{ \Uo{\alpha}{\beta} \mid \alpha,\beta < o(\kappa)\}$.
As Section \ref{Section - I - identify normal measures} proves that the measures in $\vec{U^1}$ are all the normal measures on $\kappa$ in $V^1$, we conclude that $R_\lambda$ is isomorphic to the reduction of $\mo(\kappa)^{V^1}$. 
We separate the proof of proposition \ref{Proposition - MO structure on U1} to ``if'' and ``only if'' claims.

\begin{claim}
If $\beta' < \alpha$ then $\Uo{\alpha'}{\beta'} \mo \Uo{\alpha}{\beta}$.
\end{claim}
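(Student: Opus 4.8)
The plan is to establish the claim in its Mitchell-order form: $\Uo{\alpha'}{\beta'} \mo \Uo{\alpha}{\beta}$ means exactly $\Uo{\alpha'}{\beta'} \in \Mo{\alpha}{\beta} = \Ult(V^1,\Uo{\alpha}{\beta})$, and this membership is what I will verify. Throughout I use the analysis of $\jo{\alpha}{\beta}$ from Section \ref{Section - I - description of j}: by Proposition \ref{Proposition - I - identify the ultrapower restriction of Uo alpha beta} together with Corollary \ref{Corollary - I - Restriction of pi0 to V}, $\Mo{\alpha}{\beta} = Z_{\alpha,\beta}[\Gzero_{\alpha,\beta}*\Gone_{\alpha,\beta}]$, where $Z_{\alpha,\beta} = \K(\Mo{\alpha}{\beta})$ is obtained from $V$ by the iteration $\T$ whose first step is $j_\beta$ (when $\alpha=\beta$) or $j_{\alpha,\beta}$ (when $\alpha<\beta$) and all of whose subsequent critical points lie strictly above $\kappa$, while $\Gzero_{\alpha,\beta}\uhr(\kappa+1) = \Gzero$ and $\Gone_{\alpha,\beta}\uhr\kappa = \Gone$. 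In particular $\Gzero, \Gone \in \Mo{\alpha}{\beta}$; and since $\Mo{\alpha}{\beta}$ is a $\kappa$-complete ultrapower, $H(\kappa^+)^{V^1} \subseteq \Mo{\alpha}{\beta}$, so $\pone$ and every nice $\pone$-name for a subset of $\kappa$ belong to $\Mo{\alpha}{\beta}$.

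The first step is to place the ground-model ingredients of $\Uo{\alpha'}{\beta'}$ into $\Mo{\alpha}{\beta}$. Since $\alpha'\le\beta'<\alpha\le\beta$, both $U_{\alpha'}$ and $U_{\beta'}$ are $\mo$-below $U_\alpha$ in $V$, hence belong to $M_\alpha$; when $\alpha<\beta$ they also survive the further ultrapower $i_\beta^{M_\alpha}$, whose critical point $j_\alpha(\kappa)$ exceeds their rank, so in all cases $U_{\alpha'}, U_{\beta'} \in \K(Z_1)$. As every critical point of $\T$ past its first step exceeds $\kappa$, these measures are fixed by the remainder of $\T$, whence $U_{\alpha'}, U_{\beta'} \in Z_{\alpha,\beta}$. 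Now $Z_{\alpha,\beta}$ is itself a Mitchell model, agreeing with $V$ on $H(\kappa^+)$ and on $P(\kappa)$, so Fact \ref{Fact - I - Friedman Magidor} applies inside $Z_{\alpha,\beta}$ to the $\pzero$-generic $\Gzero$: $Z_{\alpha,\beta}[\Gzero]$ carries a unique normal measure extending $U_{\alpha'}\cup\{\delset(\beta')\}$ and one extending $U_{\beta'}\cup\{\delset(\alpha')\}$; by the corresponding uniqueness statement in $V^0 = V[\Gzero]$ and the equality $P(\kappa)^{Z_{\alpha,\beta}[\Gzero]} = P(\kappa)^{V^0}$, these are precisely $\Uz{\alpha'}{\beta'}$ and $\Uz{\beta'}{\alpha'}$. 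Hence $\Uz{\alpha'}{\beta'}, \Uz{\beta'}{\alpha'} \in Z_{\alpha,\beta}[\Gzero] \subseteq \Mo{\alpha}{\beta}$. This is the only point where the hypothesis $\beta'<\alpha$ is used in an essential way: for $\alpha'\ge\alpha$ the measure $U_{\alpha'}$ escapes $Z_{\alpha,\beta}$ and the argument fails, consistently with the converse direction.

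The second step is to reconstruct $\Uo{\alpha'}{\beta'}$ inside $\Mo{\alpha}{\beta}$. By Definitions \ref{Definition - I - U^* single ultrapower} and \ref{Definition - I - U^* iterated ultrapower}, in the streamlined form of Remarks \ref{Remark p- for U^* single ultrapower} and \ref{Remark p+- for U^* iterated ultrapower}, membership $X\in\Uo{\alpha'}{\beta'}$ is equivalent to the assertion that some $p\in\Gone$ forces $\can{\kappa}$ into $\jz{\beta'}{\alpha'}(\name{X})$ (case $\alpha'=\beta'$, via a suitable reduction of $\jz{\beta'}{\alpha'}(p)$), respectively into $k^0_{\alpha',\beta'}(\name{X}) = \iz{\alpha'}{\beta'}{\beta'}{\alpha'}(\jz{\alpha'}{\beta'}(\name{X}))$ (case $\alpha'<\beta'$), over the appropriate image of $\pone$. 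Every object entering this criterion lies in $\Mo{\alpha}{\beta}$: $\Gone$, $\pone$, the nice names $\name{X}$, the measures $\Uz{\alpha'}{\beta'}$ and $\Uz{\beta'}{\alpha'}$, and $\jz{\alpha'}{\beta'}(\Uz{\beta'}{\alpha'})$, which $\Mo{\alpha}{\beta}$ computes from $\Uz{\alpha'}{\beta'}$, $\Uz{\beta'}{\alpha'}$ and functions in $H(\kappa^+)^{V^0}$. Moreover the relevant restrictions of $\jz{\beta'}{\alpha'}$, $\jz{\alpha'}{\beta'}$ and $\iz{\alpha'}{\beta'}{\beta'}{\alpha'}$, and the associated Prikry-type forcing relations, are computed by $\Mo{\alpha}{\beta}$ exactly as in $V^0$: for any inner model $W$ and any normal measure $U\in W$ on $\kappa$, the ultrapower $\Ult(H(\chi)^W,U)$ — and therefore the image under the ultrapower map of any element of $H(\chi)^W$, and the forcing relation over an image poset of size at most $|\chi|$ — is an absolute construction from $H(\chi)^W$ and $U$, and the requisite $H(\chi)^{V^0}$'s belong to $\Mo{\alpha}{\beta}$. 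Consequently $\Mo{\alpha}{\beta}$ evaluates the predicate ``$X\in\Uo{\alpha'}{\beta'}$'' correctly for every $X \in P(\kappa)^{V^1} = P(\kappa)^{\Mo{\alpha}{\beta}}$, and by Separation in $\Mo{\alpha}{\beta}$ the set $\Uo{\alpha'}{\beta'}$ is an element of $\Mo{\alpha}{\beta}$, which is the claim.

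I expect the last step to be the main obstacle: one must pin down exactly which $H(\chi)$'s of $V^0$ (and of the intermediate iterands $\Mz{\alpha'}{\beta'}$ and $M^0_{(\alpha',\beta'),(\beta',\alpha')}$) are captured by $\Mo{\alpha}{\beta}$ — through its $\kappa$-closure and the identification $\Mo{\alpha}{\beta} = Z_{\alpha,\beta}[\Gzero_{\alpha,\beta}*\Gone_{\alpha,\beta}]$ — and to check that the Magidor-iteration posets $\jz{\beta'}{\alpha'}(\pone)$ and $k^0_{\alpha',\beta'}(\pone)$, together with their forcing relations, which a priori have size of the form $j(\kappa)^+$, are nonetheless correctly recomputed inside $\Mo{\alpha}{\beta}$.
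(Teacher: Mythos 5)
Your argument is correct and follows essentially the same route as the paper: identify $\Mo{\alpha}{\beta}$ with $Z^0_{\alpha,\beta}[\Gone_{\alpha,\beta}]$, show that $\Uz{\alpha'}{\beta'}$ and $\Uz{\beta'}{\alpha'}$ (together with $\Gzero,\Gone$) belong to it, and conclude $\Uo{\alpha'}{\beta'}\in\Mo{\alpha}{\beta}$ by the definability of $\Uo{\alpha'}{\beta'}$ from these parameters, which is also the step the paper handles briefly and which you rightly flag as the place needing absoluteness of the relevant $H(\chi)$'s and forcing relations. The only noticeable deviation is how the auxiliary $\Uz$'s are placed in the ultrapower: the paper invokes Corollary \ref{Corollary - I - mo of U0} at the $V^0$ level together with the factorization of $\pi^0_{\alpha,\beta}$ through $\Mz{\alpha}{\beta}$, whereas you descend to the $V$-level measures $U_{\alpha'},U_{\beta'}$, push them through the iteration $\T$ into the core $Z_{\alpha,\beta}$, and re-apply the Friedman--Magidor uniqueness inside $Z_{\alpha,\beta}[\Gzero]$ --- a legitimate variant that reaches the same intermediate fact.
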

\begin{proof}
Suppose that $\alpha' \geq \beta'$. It is clear from Definitions \ref{Definition - I - U^* single ultrapower} and
\ref{Definition - I - U^* iterated ultrapower} that
$\Uo{\alpha'}{\beta'}$ is defined in every inner model of $V^1$
which contains $\Gzero$,$\Gone$, and $\Uz{\beta'}{\alpha'}$. 
Similarly, if $\alpha' < \beta'$ then $\Uo{\alpha'}{\beta'}$ is defined in every inner model containing $\Gzero$,$\Gone$, $\Uz{\beta'}{\alpha'}$, and $\Uz{\alpha'}{\beta'}$. \\
For every $\alpha,\beta$, $\Ult(V^1,\Uo{\alpha}{\beta}) \cong Z^0_{\alpha,\beta}[\Gone_{\alpha,\beta}] = 
Z_{\alpha,\beta}[\Gzero_{\alpha,\beta}*\Gone_{\alpha,\beta}]$,
where $Z^0_{\alpha,\beta}$ results from the iterated ultrapower $\T^0$.
Furthermore, $\Gzero = \Gzero_{\alpha,\beta}\uhr (\kappa+1)$ and
$\Gone = \Gone_{\alpha,\beta}\uhr \kappa$.\\
By Corollary \ref{Corollary - I - mo of U0} we have that
$\Uz{\alpha'}{\beta'} \mo \Uz{\alpha}{\beta}$ whenever $\alpha' <\alpha$. 
Also, by Definition \ref{Definition - I - pi0  alpha beta} we know that
the embedding $\pi^0_{\alpha,\beta}$ factors into  $\sigma^0_{1,\theta} \circ \sigma^0_{0,1}$, where
\[\sigma^0_{0,1} = 
\begin{cases} 
\jz{\beta}{\alpha} &\mbox{if }  \beta \leq \alpha \\
k^0_{\alpha,\beta} &\mbox{if } \alpha \geq \beta,
\end{cases}\]
 and that $\cp(\sigma^0_{1,\theta}) > \kappa$. 
 Therefore if $\Uz{\alpha'}{\beta'} \in \Mz{\alpha}{\beta}$ then 
 $\Uz{\alpha'}{\beta'} \in Z^0_{\alpha,\beta}$.
 We can now conclude the desired result by a simple case-by-case inspection:\\
\textbf{1.} When $\alpha' = \beta'$ and $\alpha = \beta$, we get that 
\[ \beta' < \alpha \Longrightarrow \Uz{\beta'}{\beta'} \in M^0_{(\alpha,\alpha)} \Longrightarrow   \Uz{\beta'}{\beta'} \in Z^0_{\alpha,\alpha}.\]
\textbf{2.} When $\alpha' = \beta'$ and $\alpha < \beta$, we have
\[ \beta' < \alpha \Longrightarrow \Uz{\beta'}{\beta'} \in M^0_{(\alpha,\beta)} \Longrightarrow   \Uz{\beta'}{\beta'} \in Z^0_{\alpha,\alpha}.\]
\textbf{3.} When $\alpha' < \beta'$ and $\alpha = \beta$, then
\[ \beta' < \alpha \Longrightarrow \Uz{\alpha'}{\beta'},\Uz{\beta'}{\alpha'} \in M^0_{(\alpha,\alpha)} \Longrightarrow   \Uz{\alpha'}{\beta'},\Uz{\beta'}{\alpha'} \in Z^0_{\alpha,\alpha}.\]
\textbf{4.} Finally, when $\alpha' < \beta'$ and $\alpha < \beta$, then
\[ \beta' < \alpha \Longrightarrow \Uz{\alpha'}{\beta'},\Uz{\beta'}{\alpha'} \in M^0_{(\alpha,\beta)} \Longrightarrow   Uz{\alpha'}{\beta'},\Uz{\beta'}{\alpha'} \in Z^0_{\alpha,\alpha}.\]
\end{proof}

This concludes the ``if" part of the proof. Before proceeding to the second part, let us first list several corollaries of inner model theory 
(\cite{zeman}). 

Suppose that $\Uo{\alpha'}{\beta'} \in M^1_{\alpha,\beta}$,
let  $j'_{\alpha',\beta'} : M^1_{\alpha,\beta} \to M'_{\alpha',\beta'} \cong \Ult(M^1_{\alpha,\beta},\Uo{\alpha'}{\beta'})$ be the ultrapower embedding.\\
\textbf{(a)}
\begin{enumerate}
 \item By the uniqueness of the core model, $\K(M^1_{\alpha,\beta}) = Z_{\alpha,\beta}$ ($Z_{\alpha,\beta}$ is described in Corollary \ref{Corollary - I - Restriction of pi0 to V}). 
 \item The restriction $\pi'_{\alpha',\beta'} = j'_{\alpha',\beta'}\uhr Z_{\alpha,\beta}$ can be realized as limit of a normal iteration 
 $T'$ of $Z_{\alpha,\beta}$.
 \item Let $G = \Gzero_{\alpha,\beta} * \Gone_{\alpha,\beta}$, then
\begin{enumerate}
 \item $G' = j'_{\alpha',\beta'}(G) \subset \pi'_{\alpha',\beta'}(\pzero * \pone)$
is $\pi'_{\alpha',\beta'}(\pzero * \pone)$ generic over $Z'_{\alpha',\beta'}$, 
\item $M'_{\alpha',\beta'} = Z'_{\alpha',\beta'}[G']$, and
\item for every $x \in \Mo{\alpha}{\beta} = Z_{\alpha,\beta}[G]$, if $x = (\name{x})_G$ then
$j'_{\alpha',\beta'}(x) = \pi'_{\alpha',\beta'}(\name{x})_{G'}$.
\end{enumerate}
\end{enumerate}
\noindent \textbf{(b) } 
The models $M'_{\alpha',\beta'} \cong \Ult(M^1_{\alpha,\beta},\Uo{\alpha'}{\beta'})$ and $\Mo{\alpha'}{\beta'} \cong \Ult(V^1,\Uo{\alpha'}{\beta'})$ have the same 
initial segment of the cumulative hierarchy, $V_{(j^1_{\alpha',\beta'}(\kappa))}$.
Indeedm, $\Mo{\alpha}{\beta} \cap (V^1)_{\kappa+1} = (V^1)_{\kappa+1}$ because $\Mo{\alpha}{\beta}$ is an ultrapower of $V^1$ by a $\kappa$ complete ultrafilter
Therefore when applying a $\Uo{\alpha'}{\beta'}$ ultrapower to $V^1$ and $\Mo{\alpha}{\beta}$, we find that 
$j^1_{\alpha',\beta'}\uhr (\kappa^+ + 1) = j'_{\alpha',\beta'}\uhr(\kappa^+ + 1)$ and 
$M'_{\alpha',\beta'} \cap V_{j^1_{\alpha',\beta'}(\kappa)} = \Mo{\alpha'}{\beta'} \cap V_{j^1_{\alpha',\beta'}(\kappa)}$.
It follows that 
\begin{enumerate}
 \item $\pi'_{\alpha',\beta'} \uhr  \kappa^+  = \pi^1_{\alpha',\beta'}\uhr \kappa^+$,
 \item $Z'_{\alpha',\beta'} \uhr {j^1_{\alpha',\beta'}(\kappa)} = Z_{\alpha',\beta'}\uhr{j^1_{\alpha',\beta'}(\kappa)}$, and
 \item The following normal iterations agree up to $j_{\alpha',\beta'}(\kappa^+) = \pi_{\alpha',\beta'}(\kappa^+)$:
 \begin{itemize}
  \item $T$ which generates  $\pi_{\alpha',\beta'} : V \to M_{\alpha',\beta'}$, and 
  \item $T'$, generating  $\pi'_{\alpha',\beta'} : M_{\alpha,\beta} \to M'_{\alpha',\beta'}$.
 \end{itemize}
 
  \end{enumerate}

\begin{claim}
If $\Uo{\alpha'}{\beta'} \mo \Uo{\alpha}{\beta}$  then $\beta' < \alpha$. 
\end{claim}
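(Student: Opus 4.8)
Assume $\Uo{\alpha'}{\beta'}\mo\Uo{\alpha}{\beta}$, that is, $\Uo{\alpha'}{\beta'}\in\Mo{\alpha}{\beta}$; the goal is to derive $\beta'<\alpha$. The plan is to use the inner model-theoretic facts (a) and (b) recorded just above to determine the initial segment of the normal iteration of $Z_{\alpha,\beta}=\K(\Mo{\alpha}{\beta})$ induced by the ultrapower $j'_{\alpha',\beta'}:\Mo{\alpha}{\beta}\to M'_{\alpha',\beta'}\cong\Ult(\Mo{\alpha}{\beta},\Uo{\alpha'}{\beta'})$, and to observe that this initial segment forces $Z_{\alpha,\beta}$ to carry a normal measure of Mitchell index $\beta'$ on $\kappa$ (or on an image of $\kappa$), which is possible only when $\beta'<o^{Z_{\alpha,\beta}}(\kappa)$.

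First I would compute $o^{Z_{\alpha,\beta}}(\kappa)$ together with the list of normal measures on $\kappa$ in $Z_{\alpha,\beta}$. By Corollary \ref{Corollary - I - Restriction of pi0 to V}, $Z_{\alpha,\beta}$ is obtained from $V=\K(V^1)$ by the iteration $\T$ whose first step is $j_\beta:V\to M_\beta$ when $\alpha=\beta$, and $j_{\alpha,\beta}=i_\beta^{M_\alpha}\circ j_\alpha$ when $\alpha<\beta$, and all of whose subsequent steps have critical point strictly above $\kappa$. Since $o^{M_\beta}(\kappa)=\beta$, since $\cp(i_\beta^{M_\alpha})=j_\alpha(\kappa)>\kappa$ gives $o^{M_{\alpha,\beta}}(\kappa)=o^{M_\alpha}(\kappa)=\alpha$, and since the remaining steps of $\T$ fix $V_{\kappa+1}$ and hence every normal measure on $\kappa$, we obtain in both cases (using $\alpha\le\beta$) that $o^{Z_{\alpha,\beta}}(\kappa)=\alpha$ and that the normal measures on $\kappa$ in $Z_{\alpha,\beta}$ are exactly $U_0\mo U_1\mo\cdots$, indexed by $\gamma<\alpha$.

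Next I would pin down the first steps of the relevant iterations. By fact (a), $j'_{\alpha',\beta'}\uhr Z_{\alpha,\beta}$ is realized by a normal iteration $T'$ of $Z_{\alpha,\beta}$; by fact (b), $T'$ agrees, up to $\pi_{\alpha',\beta'}(\kappa^+)=j^1_{\alpha',\beta'}(\kappa^+)$, with the normal iteration $T$ of $V$ realizing $\jo{\alpha'}{\beta'}\uhr V$. By Corollary \ref{Corollary - I - Restriction of pi0 to V} applied to the pair $(\alpha',\beta')$, the iteration $T$ begins as follows: when $\alpha'=\beta'$, its first step applies $U_{\beta'}$ at $\kappa$; when $\alpha'<\beta'$, its first two steps apply $U_{\alpha'}$ at $\kappa$ and then $j_{\alpha'}(U_{\beta'})$ --- the measure of Mitchell index $\beta'$ --- at $j_{\alpha'}(\kappa)$. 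All measures and critical points occurring here have rank below $\kappa^+\le\pi_{\alpha',\beta'}(\kappa^+)$, so the agreement forces $T'$ to perform exactly these same initial steps.

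Finally I would read off the conclusion. In the case $\alpha'=\beta'$, $T'$ is an iteration of $Z_{\alpha,\beta}$ whose first step applies $U_{\beta'}$ at $\kappa$, so $U_{\beta'}$ lies on the coherent sequence of $Z_{\alpha,\beta}$, whence $\beta'<o^{Z_{\alpha,\beta}}(\kappa)=\alpha$. In the case $\alpha'<\beta'$, $T'$ first applies $U_{\alpha'}$ at $\kappa$, forming $\Ult(Z_{\alpha,\beta},U_{\alpha'})$, in which the image $\kappa^{*}$ of $\kappa$ satisfies $o(\kappa^{*})=\alpha$ (the ultrapower fixes $\alpha<\kappa$), and then it applies a measure of Mitchell index $\beta'$ at $\kappa^{*}$, forcing $\beta'<\alpha$. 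Either way $\beta'<\alpha$, as required. The main obstacle is the faithful use of facts (a) and (b): one has to be sure that ``agreement of the two normal iterations up to $j^1_{\alpha',\beta'}(\kappa^+)$'' genuinely transfers the explicitly known initial segment of the $V$-iteration from Corollary \ref{Corollary - I - Restriction of pi0 to V} onto the iteration of the a priori different model $Z_{\alpha,\beta}=\K(\Mo{\alpha}{\beta})$, and in particular that $V$ and $Z_{\alpha,\beta}$ agree on $V_{\kappa+1}$, so that the normal measures on $\kappa$ at play are literally the same sets in both models.
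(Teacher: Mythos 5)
Your proof is correct and follows essentially the same route as the paper's: use facts (a) and (b) together with Corollary \ref{Corollary - I - Restriction of pi0 to V} to transfer the known initial steps of the iteration $T$ of $V$ onto the iteration $T'$ of $Z_{\alpha,\beta}$, compute $o^{Z_{\alpha,\beta}}(\kappa)=\alpha$, and then read off $\beta'<\alpha$ from the Mitchell index of the measure applied at $\kappa$ (case $\alpha'=\beta'$) or at $j_{\alpha'}(\kappa)$ (case $\alpha'<\beta'$). The one small imprecision is your remark that ``all measures and critical points occurring here have rank below $\kappa^+$'': the critical point $j_{\alpha'}(\kappa)$ of the second step exceeds $\kappa^+$, so the bound you really want --- and the one you do also state --- is $\pi_{\alpha',\beta'}(\kappa^+)=j^1_{\alpha',\beta'}(\kappa^+)$, which is exactly how the paper justifies the agreement of $T$ and $T'$ at $j_{\alpha'}(\kappa)$.
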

\begin{proof}
{We use the notations and results listed above.}
Since $Z'_{\alpha',\beta'}$ and $Z_{\alpha',\beta'}$ agree up to $\jo{\alpha'}{\beta'}(\kappa)$ we get that 
$o^{Z_{\alpha',\beta'}}(\nu) = o^{Z'_{\alpha',\beta'}}(\nu)$ for every $\nu < \jo{\alpha'}{\beta'}(\kappa)$. 

The first step of the iteration $T'$ coincides with the first step of the iteration
$T$. According to Corollary \ref{Corollary - I - Restriction of pi0 to V}, the first step of $T$ is an ultrapower by $U_{\alpha'}$;
thus $U_{\alpha'} \in Z_{\alpha,\beta}$. Since $o^{Z_{\alpha,\beta}}(\kappa) = \alpha$ it follows that
$\alpha' < \alpha$. Therefore if $\alpha' = \beta'$ then $\beta' < \alpha$, {as desired}. \\
Suppose now that $\alpha' < \beta'$. 
Since $\pi'_{\alpha',\beta'}$ is the embedding generated by $T'$, it factors into $\pi'_{\alpha',\beta'} = k' \circ j_{\alpha'}$,
where $j_{\alpha'} : Z_{\alpha,\beta} \to N'$ and $k' : N' \to Z'_{\alpha',\beta'}$, with $\cp(k') > \kappa$.
We have that $j_{\alpha',\beta'}(\kappa) > j_{\alpha'}(\kappa)$, therefore the iterations $T$ and $T'$ agree at $j_{\alpha'}(\kappa)$.
We also know $j_{\alpha'}(\kappa)$ is a criticaL point in $T$ via the ultrapower by $U = j_{\alpha'}(U_{\beta'})$. Note that 
$o(U) = j_{\alpha'}(\beta')$. Therefore the same holds for $T'$, and we must have that $U \in N'$. 
It follows that $j_{\alpha'}(\beta') = o(U) < o^{N'}(j_{\alpha'}(\kappa))$. Finally, $o^{Z_{\alpha,\beta}}(\kappa) = \alpha$ and we get that
$o^{N'}(j_{\alpha'}(\kappa)) = j_{\alpha'}(o^{Z_{\alpha,\beta}}(\kappa)) = j_{\alpha'}(\alpha)$.
We conclude that $j_{\alpha'}(\beta') < j_{\alpha'}(\alpha)$, therefore $\beta' < \alpha$. 
\end{proof}

\section{The Normal Measures on $\kappa$ in $V^1$}\label{Section - I - identify normal measures}

\begin{proposition}\label{Proposition - I - Uniquness of U^*}
The measures $\Uo{\alpha}{\beta}$, $\alpha,\beta < \lambda$ are the only measures on $\kappa$
in $V^1$.
\end{proposition}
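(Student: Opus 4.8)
The plan is to analyze an arbitrary normal measure $W$ on $\kappa$ in $V^1$ through its ultrapower embedding $j = j_W : V^1 \to M \cong \Ult(V^1, W)$, and to show that $W$ must coincide with one of the measures $\Uo{\alpha}{\beta}$.

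\emph{Restricting to $V^0$.} As in the discussion preceding Proposition \ref{Proposition - I - identify the ultrapower restriction of Uo alpha beta} (and by \cite{zeman}), $\K(M)$ is obtained from $V$ by a normal linear iterated ultrapower $j \uhr V : V \to \K(M)$ with strictly increasing critical points, the least of which is $\kappa$, and $M = \K(M)[j(\Gzero*\Gone)]$ for a generic definable from $j``(\Gzero*\Gone)$. Lifting the $V$-iteration through $\pzero$ with the aid of $\Gzero$ (Fact \ref{Fact - I - Friedman Magidor}) presents $j \uhr V^0$ as an iterated ultrapower of $V^0$, still with strictly increasing critical points starting at $\kappa$; hence $W_0 := \{ X \in \power(\kappa) \cap V^0 \mid \kappa \in j(X)\}$ is precisely the first measure of that iteration, so $W_0 \in V^0$ is a normal measure on $\kappa$ in $V^0$, and by Fact \ref{Fact - I - Friedman Magidor} $W_0 = \Uz{\gamma}{\eta}$ for some $\gamma, \eta < \lambda$. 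In particular $\Delta_\gamma(\eta) \in W$, the first step of $j \uhr V^0$ is $\jz{\gamma}{\eta}$, one has $o^{\K(M)}(\kappa) = \gamma$, and in $M$ the Sacks function of $j(\Gzero)$ satisfies $s_{j_\gamma(\kappa)}(\kappa) = \eta$; so, by Definition \ref{Definition - recipe for Qone} applied inside $M$, stage $\kappa$ of $j(\pone)$ is nontrivial if and only if $\eta < \gamma$.

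\emph{Ruling out $\gamma > \eta$.} Suppose $\gamma > \eta$. Then stage $\kappa$ of $j(\pone)$ is a nontrivial one-point Prikry forcing, so $\kappa \in j(\delset')$ and the generic point $j(d)(\kappa)$ is a defined ordinal below $\kappa$. Since $\delset' \in W$ and $d$ is regressive on $\delset'$, we get $j(d)(\kappa) = [\,\nu \mapsto d(\nu)\,]_W$, and normality of $W$ gives a fixed $\rho < \kappa$ with $A := \{\nu \in \delset' \mid d(\nu) = \rho\} \in W$. But then $A$ is stationary in $\kappa$ while $d$ is constant on $A$, contradicting that $d$ is almost injective. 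Hence $\gamma \le \eta$; consequently $\Delta_\gamma(\eta)$ is disjoint from $\delset'$ and stage $\kappa$ of $j(\pone)$ is trivial.

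\emph{Determining $W$.} Exactly one of $\Gamma$ and $\kappa \setminus \Gamma$ belongs to $W$. If $\kappa \setminus \Gamma \in W$, set $(\alpha,\beta) = (\eta,\gamma)$; here $\alpha \ge \beta$, and $\Uo{\alpha}{\beta}$ extends $\Uz{\gamma}{\eta} = W_0$ and contains $\kappa \setminus \Gamma$. If $\Gamma \in W$, then $\kappa$ is the generic Prikry point of the stage $\nu^1_0 = \jz{\gamma}{\eta}(\kappa) > \kappa$ of $j(\pone)$; a further analysis of the new Prikry stages, as in the previous paragraph, forces $\gamma < \eta$, and we set $(\alpha,\beta) = (\gamma,\eta)$. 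In either case the plan is to show that $j \uhr V^0$ equals the embedding $\pi^0_{\alpha,\beta}$ resulting from the iteration $\T^0$ of Definition \ref{Definition - I - pi0  alpha beta}: the two embeddings agree on their first step, and one proves by induction along the stages of $j \uhr V^0$ that they are equal, the key point being that at every nontrivial Prikry stage $\nu > \kappa$ of $j(\pone)$ the iteration takes an ultrapower at the measurable $\nu$ of $\K(M)$, while the generic point $j(d)(\nu)$ is forced — again by a regressivity argument together with the almost injectivity of $d$ — to be one of the finitely many images of earlier critical points already produced, exactly in the pattern prescribed by $\T^0$, and the tuning-fork values of $j(\Gzero)$ along the way are pinned down by $\Gzero$ and the $o$-sequence of $\K(M)$. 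Granting this, Proposition \ref{Proposition - I - identify the ultrapower restriction of Uo alpha beta} (and its proof) identifies $W$ with the normal measure on $\kappa$ derived from $\pi^0_{\alpha,\beta} = \jo{\alpha}{\beta}\uhr V^0$, i.e. $W = \Uo{\alpha}{\beta}$.

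\emph{Main obstacle.} The crux is the claim in the last paragraph: that $W_0$ together with the single bit ``$\Gamma \in W$'' already determines the whole iterated ultrapower $j \uhr V^0$, so that no normal measure can arise from an unintended choice of Prikry points in $j(\pone)$. This is essentially the converse of Proposition \ref{Proposition - I - identify the ultrapower restriction of Uo alpha beta}, and I expect it to hinge on the structural analysis of dense open subsets of $\pone$ in Section \ref{SubSection - I - Structural Results On Dense Open Sets}, particularly Corollary \ref{Corollary - I - Main Structural}, used to show that any $j(\pone)$-generic over $\K(M)[j(\Gzero)]$ compatible with $W$ must be $\Gone_{\alpha,\beta}$.
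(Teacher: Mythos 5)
Your initial reduction matches the paper: you pin down $W_0 = \{X\in\power(\kappa)\cap V^0 \mid \kappa\in j_W(X)\}$ as some $\Uz{\gamma}{\eta}$ using the core model and Friedman--Magidor analysis, and you rule out the Mitchell position exceeding the Sacks value by the same regressivity/almost-injectivity argument the paper uses (its subclaim~1). The split according to whether $\Gamma\in W$, and the identification of the candidate $(\alpha,\beta)$, also coincide.

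Where you diverge is the final step, and there your route is both harder than necessary and incomplete. The paper does \emph{not} reconstruct the whole iteration $j_W\uhr V^0$ as $\pi^0_{\alpha,\beta}$, and does \emph{not} invoke the structural-iteration machinery of Section~\ref{SubSection - I - Structural Results On Dense Open Sets} or Corollary~\ref{Corollary - I - Main Structural}. Instead it argues directly that $\Uo{\alpha}{\beta}\subseteq W$, which suffices since both are ultrafilters on $\power(\kappa)^{V^1}$. Concretely, take $X\in\Uo{\alpha}{\beta}$; by Remark~\ref{Remark p- for U^* single ultrapower} (resp.\ Remark~\ref{Remark p+- for U^* iterated ultrapower}) there is $p\in\Gone$ such that the truncated condition $\jz{\beta}{\alpha}(p)^{-\kappa}$ (resp.\ $k^0_{\beta,\gamma}(p)^{+(\kappa,\jz{\beta}{\gamma}(\kappa))-\kappa-\jz{\beta}{\gamma}(\kappa)}$) forces $\can\kappa\in\jz{\beta}{\alpha}(\name X)$. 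Applying the remainder $k^0$ of the iteration (whose critical point exceeds $\kappa$) pushes this up to $\pi^0$; the only thing left to check is that the truncated condition lands in $G^1_W=j_W(\Gone)$. That is exactly where the club $\Sigma$ of closure points of $d^{-1}$ (and, in the $\Gamma\in W$ case, the sets $\Gamma'$ and $\Pi$) come in: they guarantee that $\{\nu<\kappa\mid p^{-\nu}\in\Gone\}$ (resp.\ $\{\nu\mid p^{(+\nu,d^{-1}(\nu))-\nu-d^{-1}(\nu)}\in\Gone\}$) is $W$-large, so by reflection $j_W(p)^{-\kappa}\in G^1_W$. Note that this only requires knowing the first one or two steps of $j_W\uhr V$, not the full iteration.

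Beyond being more work, your plan has an unaddressed gap at its end: even if you succeeded in showing $j_W\uhr V^0 = \pi^0_{\alpha,\beta} = \jo{\alpha}{\beta}\uhr V^0$, that alone does not yield $W=\Uo{\alpha}{\beta}$, because $W$ is determined by the action of $j_W$ on \emph{all} of $V^1$, not just $V^0$. You would still have to show that the $\pone$-generic $j_W(\Gone)$ over $Z^0_{\alpha,\beta}$ coincides with $\Gone_{\alpha,\beta}=\jo{\alpha}{\beta}(\Gone)$; Proposition~\ref{Proposition - I - identify the ultrapower restriction of Uo alpha beta} constructs a particular generic but does not assert the generic extending $\pi^0_{\alpha,\beta}``\Gone$ is unique. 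The paper's density argument sidesteps this entirely by targeting $\Uo{\alpha}{\beta}\subseteq W$ head-on.

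One small remark: your argument in the $\Gamma\in W$ branch that ``a further analysis of the new Prikry stages $\ldots$ forces $\gamma<\eta$'' is right but a bit compressed; the paper obtains $\beta<\gamma$ there (note there is a typo in the paper's statement of subclaim~3) directly from $\kappa\in j_W(\Delta_\beta(\gamma))$ together with the requirement that $\kappa$ lies in the range of $j_W(d)$, using the finite factorization of $\pi$ from \cite{OBN - Magidor Iteration}, and no new regressivity argument is needed.
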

\begin{proof}
Let $W$ be a normal measure on $\kappa$ in $V^1$, and $j_W : V^1 \to M_W \cong \Ult(V^1,W)$. 
There is a normal iteration $T^W$ of $V$ such that the resulting embedding $\pi : V \to M$ coincides with 
$j_W\uhr V$.
Moreover, if $V^1 = V[G]$ and $G = G^0 * G^1 \subset \pzero*\pone$, then $j_W(G) = j_W(G^0) * j_W(G^1)$ is $\pi(\pzero*\pone)$
generic over $M_W$. Denote $j_W(G^0)$, $j_W(G^1)$ by $G^0_W$, $G^1_W$ respectively. 
For every $M_W-$inaccessible  $\tau < j_W(\kappa)$ let $s^{G^0_W}_\tau$ be the $G^0_W-$induced generic Sacks function at $\tau$. 

According to Friedman-Magidor (\cite{Friedman-Magidor - Normal Measures}), $\pi``G^0$ determines the values of every $G^0_W$ Sacks function, $s^{G^0_W}_\tau$, 
with the exception of the values $s^{G^0_W}_{\gamma^1}(\gamma)$, where $\gamma$ if a critical point in $T_W$ and $\gamma^1$ is its 
 image\footnote{Namely, if $\gamma = cp(\pi_{i,i+1})$ is the critical point of the $i-$th stage of $T_W$ then 
 $\gamma^1= \pi_{i,i+1}(\gamma)$.}. 
In particular, $\kappa = \cp(\pi)$ and $\pi$ factors into $\pi = k \circ j_\beta$, where $\beta < o(\kappa)$ and  $\cp(k) > \kappa$.
Let $T_W^0$ be the lift of the iteration $T_W$ to $V^0 = V[G^0]$, and determined by $G^0_W$, and let $\pi^0 : V[G^0] \to M_W[G^0_W]$ be its induced embedding.

Let $\gamma  = s^{G^0_W}_{j_\beta(\kappa)}(\kappa)$, and 
$\Gzero_{\Uz{\beta}{\gamma}} = \jz{\beta}{\gamma}(\Gzero)$ be the $j_\beta(\pone)-$generic filter over $M_\beta$, 
associated with $\Uz{\beta}{\gamma}$. 
It follows that $\pi^0 = k^0 \circ \jz{\beta}{\gamma}$, where 
$k^0 : M_\beta[\Gzero_{\Uz{\beta}{\gamma}}] \to M_W[G^0_W]$ is
an extension of $k$. \\ 

\noindent \textbf{subclaim 1: $\gamma \geq \beta$.}\\
$o^{M_\beta}(\kappa) = \beta$ and $s^{G_W}_{j_W(\kappa)}(\kappa) = \gamma$, therefore
$\kappa \in j_W(\delset_\beta(\gamma))$.
If $\gamma$ was smaller than $\beta$, we would get that $\delset_\beta(\gamma) \subset \delset'$, 
i.e., $W$ concentrates on the set of non trivial iteration stages in $\pone$. 
Yet this contradicts the normality of $W$, as the generic Prikry function $d: \delset' \to \kappa$ is 
regressive and injective outside a bounded set.\\

\noindent {Recall that we have defined $\Gamma$ to be the set of all generic Prikry points, i.e., $\Gamma = \rng(d) = d``\delset'$.}\\
\noindent\textbf{subclaim 2: If $\Gamma \not\in W$ then $W = \Uo{\gamma}{\beta}$.}\\
It is sufficient to show that $\Uo{\gamma}{\beta} \subset W$. 
Suppose that $X \in \Uo{\gamma}{\beta}$, and let $\name{X}$ be a $\Gone$ name for $X$ in $V^0$.
According to remark \ref{Remark p- for U^* single ultrapower}  there is a condition
$p \in \Gone$ so that 
$\jz{\beta}{\gamma}(p)^{-\kappa} \force \can{\kappa} \in \jz{\beta}{\gamma}(\name{X}).$
By applying $k^0$ we get that
$\pi^0_W(p)^{-\kappa} \force \can{\kappa} \in \pi^0(\name{X})$.
Let $\Sigma \subset \kappa$ be the set of closure points of $d^{-1}$ (namely
$\nu \in \Sigma$ if and only if $d^{-1}(\nu) \subset \nu$). Using the Magidor iteration support, it is not difficult to verify that $\Sigma$ is closed unbounded in $\kappa$ (also, see \cite{OBN - Magidor Iteration}). Since $\Gamma \not\in W$, it follows that the set $\{ \nu < \kappa \mid p^{-\nu} \in \Gone\}$ belongs to $W$, thus $\pi^0(p)^{-\kappa} \in G^1_W$ and $X \in W$. \\

\noindent\textbf{subclaim 3: If $\Gamma \in W$ then $\gamma < \beta$ and $W = \Uo{\beta}{\gamma}$.}\\
Suppose now that $\Gamma \in W$.
Let $\Gamma' = \{ \alpha < \kappa : |d^{-1}(\alpha)|=1\}$.
It is not difficult to verify that $\Gamma \setminus \Gamma'$ is bounded in $\kappa$\footnote{also see \cite{OBN - Magidor Iteration}.}. 
Therefore if $\Gamma \in W$ then there exists a unique $\mu < j_W(\kappa)$ such that $j_W(d)(\mu) = \kappa$. \\
According to the results in \cite{OBN - Magidor Iteration} \footnote{i.e., the proofs of Proposition 3.2 and Lemma 3.6},
there is a finite subiteration of $T_W$, by which $\pi = k \circ j_\beta$ factors into
$\pi = e \circ j_{U'} \circ j_\beta$ so that
\begin{enumerate}
\item $j_{U'}$ is an ultrapower embedding by a normal measure $U'$ on $j_\beta(\kappa)$.
\item $U' = j_\beta(U_{\beta'})$ for some $\beta' < o(\kappa)$.
\item $\mu = e(j_\beta(\kappa))$.
\item $\cp(e) > \kappa$. 
\end{enumerate}
Let $\pi^0 = e^0 \circ j_{U^0} \circ \jz{\beta}{\gamma}$ be the corresponding factorization of the extension $\pi^0$ of $\pi$. 
In particular, $U^0 \in \Mz{\beta}{\gamma}$ extends $U' = j_\beta(U_{\beta'}) \in M_\beta$, and $\mu = e^0 \circ \jz{\beta}{\gamma}(\kappa)$.  
\\
We have $\kappa \in j_W(\Delta_\beta(\gamma))$.
If follows from the Definition of
$\pone$ that $\beta < \gamma$ and that $\mu = j_W(d^-1)(\kappa) \in \pi^0(\Delta_\gamma(\beta))$, i.e., 
$e^0 \circ \jz{\beta}{\gamma}(\kappa) \in e^0 \circ j_{U^0} \circ \jz{\beta}{\gamma}(\Delta_\gamma(\beta))$. 
{Therefore, it must means that  $U^0 = \jz{\beta}{\gamma}(U_{\gamma}(\beta))$} so we can rewrite $\pi^0$ as $\pi^0 = e^0 \circ k^0_{\beta,\gamma}$ (i.e., $k^0_{\beta,\gamma}$ in Definition \ref{Definition - I - U^* iterated ultrapower}).\\
According to remark \ref{Remark p+- for U^* iterated ultrapower} there is a condition
$p \in \Gone$ so that 
\begin{equation}\label{Equation - p+- iterated}
k^0_{\beta,\gamma}(p)^{+\left( \kappa, \jz{\beta}{\gamma}(\kappa) \right)-\kappa- \jz{\beta}{\gamma}(\kappa)}
		  \force \check{\kappa}\in k^0_{\beta,\gamma}(\name{X}).
\end{equation}
Let $\Pi = \{\nu \in \Gamma'  \mid \text{ for every } \mu < \kappa \text{ if } \mu> d^{-1}(\nu) \text{ then }(d(\mu)\not \in [\nu,d^{-1}(\nu)])\ \}$.
It is not difficult to verify that $\Gamma \setminus \Pi$ is bounded in $\kappa$ (see \cite{OBN - Magidor Iteration}), and that for every
$p \in \Gone$ and $\mu < \kappa$, 
$p^{(+\mu,d^{-1}(\mu)) - \mu - d^{-1}(\mu)} \in \Gone$ whenever 
$\mu \in \Pi \cap \Sigma$.\\
Since $\Gamma \in W$ and $\Sigma \subset \kappa$ is a club, it follows 
that $\Pi \cap \Sigma \in W$.
We conclude that $\pi^0(p)^{+\left( \kappa, \mu \right)-\kappa- \mu} \in G^1_W$.
By Applying $e^0$ to equation \ref{Equation - p+- iterated} we conclude 
that 
\[\pi^0(p)^{+\left( \kappa, \mu \right)-\kappa- \mu}
		  \force \check{\kappa}\in \pi^0(\name{X}).\]
Therefore $X \in W$ 
\end{proof}

\section{A Final Cut}\label{Section - I - final cut}

According to Friedman and Magidor (\cite{Friedman-Magidor - Normal Measures}),
there is a sequence $\vec{X}^\kappa = \la X^\kappa_i \mid i < \kappa^+\ra$
of pairwise disjoint stationary subsets of $\kappa^+ \cap \Cf(\kappa)$ in $V^1$ and a function $f : \kappa \to V^1$, so that $j(f)(\kappa) = \vec{X}^\kappa$ for
every elementary embedding $j$ in $V^1$ with $\cp(j) = \kappa$\footnote{i.e., we can use a $\Diamond_{\kappa^+}$ sequence in $V = \K(V^1)$ which is definable from $H(\kappa^+)^V$.}. 
We may assume that $f(\nu) = \la X^\nu_i \mid i < \nu^+\ra$ is a $\nu^+-$sequence of disjoint stationary subsets of $\nu^+ \cap \Cf(\nu)$ for each $\nu < \kappa$, 

\begin{definition}[$\Code^*(\nu)$, $\nu < \kappa$]\label{Definition - code X}
A condition in $\Code^*(\nu)$ is a closed, bounded subset $c$ of $\nu^+$ which are disjoint from $X^\nu_0$.
For conditions $c,d \in \Code^*(\nu)$, $d \geq c$, if and only if:
\begin{enumerate}
\item $d$ end extends $c$.
\item For $i \leq \max(c)$: if $i$ belongs to $c$ then $d \setminus c$ is disjoint from $X^\nu_{1+2i}$; if $i$ does not belong to $c$ then $d \setminus c$ is disjoint from $X^\nu_{1+2i + 1}$.
\end{enumerate}
\end{definition}
For a set $X \subset \kappa$ in $V^1$, let $\pX$ be {a variation} of the Friedman-Magidor iteration, 
$\pX = \pX_{\kappa} = \la \pX_\nu,\qX_\nu \mid \nu < \kappa\ra$, where
\[
\qX_\nu = 
\begin{cases}
    \Code^*(\nu) &\mbox{if }  \nu \in X \text{ is inaccessible.} \\
    \text{The trivial poset}  &\mbox{otherwise}.
\end{cases}
\]
\begin{lemma}\label{Lemma - I - Cut Down Measure Preservation}
Let $X \subset \kappa$ be a set in $V^1$, and  $\GX \subset \pX$ be a generic filter over $V^1$. 
For every normal measure $U$ on $\kappa$ in $V^1$, if $X \not\in U$ then $U$ has a unique extension $U^X$ in  $V^1[\GX]$. Furthermore, these are the only normal measure on $\kappa$ in $V^1[\GX]$.
\end{lemma}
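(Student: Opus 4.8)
The plan is to run, for $\pX$, the Friedman--Magidor lifting analysis that gives Fact~\ref{Fact - I - Friedman Magidor} for $\pzero$; the only genuinely new point is that when $X\notin U$ the forcing of $j_U(\pX)$ at coordinate $\kappa$ is trivial, so the ``tuning fork'' freedom that produced the $\lambda$ distinct extensions $U(\eta)$ in the $\pzero$ case collapses to a single choice. Fix a normal measure $U$ on $\kappa$ in $V^1$ with $X\notin U$ and let $j_U:V^1\to M_U\cong\Ult(V^1,U)$ be the ultrapower embedding. Since $X\notin U$ we have $\kappa\notin j_U(X)$, so in $j_U(\pX)$ -- which $M_U$ sees as its own final-cut iteration of length $j_U(\kappa)$ -- coordinate $\kappa$ carries the trivial poset; writing $\mathbb{Q}$ for the tail of $j_U(\pX)$ above coordinate $\kappa$, we have $j_U(\pX)\cong \pX * \mathbb{Q}$, with $\pX$ computed identically in $V^1$ and $M_U$. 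Exactly as in \cite{Friedman-Magidor - Normal Measures} (compare Fact~\ref{Fact - I - Friedman Magidor}(3)(d)), the image $j_U``\GX$ determines a filter on $j_U(\pX)$ which is now \emph{completely} specified, precisely because the unique critical point $\kappa$ of $j_U$ lies over a trivial coordinate and hence contributes no tuning-fork value; thus there is a unique $j_U(\pX)$-generic $H$ over $M_U$ with $j_U``\GX\subseteq H$, and $H$ is definable in $M_U$, so $M_U[H]=M_U$. Hence $j_U$ lifts to an elementary $j_U^{+}:V^1[\GX]\to M_U$ with $j_U^{+}((\name{x})_{\GX})=(j_U(\name{x}))_H$, and
\[ U^X:=\{A\subseteq\kappa\mid\kappa\in j_U^{+}(A)\} \]
is a normal measure on $\kappa$ in $V^1[\GX]$ with $U^X\cap V^1=U$.

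For uniqueness, let $W$ be any normal measure on $\kappa$ in $V^1[\GX]$ with $W\cap V^1=U$, and $j_W:V^1[\GX]\to M_W\cong\Ult(V^1[\GX],W)$. Since $|\pX|=\kappa$, the restriction $j_W\uhr V^1$ is the $U$-ultrapower $j_U$ of $V^1$: the canonical factor $k:M_U\to M_W$ has $\cp(k)>\kappa$, and since $\pX$ is $\kappa^+$-c.c.\ one has $(\kappa^+)^{M_U}=(\kappa^+)^{M_W}=(\kappa^+)^{V^1}$, which forces $k$ to be the identity on $\kappa^+ + 1$, and the usual argument of \cite{Friedman-Magidor - Normal Measures} finishes the identification. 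Then $M_W=M_U[j_W(\GX)]$, where $j_W(\GX)$ is $j_U(\pX)$-generic over $M_U$ and contains $j_W``\GX=j_U``\GX$; by the previous paragraph such a generic is unique, hence equals $H$, hence $M_W=M_U$, $j_W=j_U^{+}$ and $W=U^X$.

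It remains to check that every normal measure $W$ on $\kappa$ in $V^1[\GX]$ arises this way; by the first two paragraphs it suffices to show $U:=W\cap V^1$ is a normal measure on $\kappa$ in $V^1$ with $X\notin U$. That $U$ is a normal measure in $V^1$ is routine (a regressive $f:\kappa\to\kappa$ in $V^1$ is constant on a $W$-large set, which set lies in $V^1$, hence in $U$). Suppose toward a contradiction that $X\in U$. Then $\kappa\in j_U(X)$, so coordinate $\kappa$ of $j_U(\pX)$ is the nontrivial poset $\Code^*(\kappa)$ of Definition~\ref{Definition - code X}, built from $j_U(f)(\kappa)=\vec{X}^{\kappa}$ (the defining property of $f$ and $\vec{X}^{\kappa}$). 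The lift $j_W$ sends $\GX$ to a $j_U(\pX)$-generic over $M_W$ whose coordinate-$\kappa$ part is a $\Code^*(\kappa)$-generic, i.e.\ a closed unbounded $c\subseteq(\kappa^+)^{M_W}$ disjoint from $X^\kappa_0$. As in the previous paragraph $(\kappa^+)^{M_W}=(\kappa^+)^{V^1[\GX]}=(\kappa^+)^{V^1}$ and the parameter $X^\kappa_0$ is the genuine one, so $c\in M_W\subseteq V^1[\GX]$ witnesses that $X^\kappa_0$ is nonstationary in $V^1[\GX]$. But $\pX$ has size $\kappa$, hence is $\kappa^+$-c.c., hence preserves the $V^1$-stationarity of $X^\kappa_0\subseteq\kappa^+\cap\Cf(\kappa)$ -- a contradiction. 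Therefore $X\notin U$, and $W=U^X$.

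I expect the main obstacle to be the uniqueness claim underlying the first two paragraphs: one must know that once the forcing at the image of the (single) critical point of $j_U$ is trivial, the image $j_U``\GX$ literally pins down the entire $M_U$-generic for $j_U(\pX)$. This is exactly where the Friedman--Magidor coding and the tuning-fork bookkeeping at images of critical points must be invoked with care; by contrast, the remaining ingredients -- the $<\nu$-closure of $\Code^*(\nu)$ and the closure/chain-condition of $\pX$, preservation of stationary subsets of $\kappa^+$, and the factor-map computation -- are routine.
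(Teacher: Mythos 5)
Your proposal follows the same overall skeleton as the paper's proof (trivial coordinate at $\kappa$, uniqueness of the $M_U$-generic above $\kappa$ determined by $j``\GX$, and the stationary-set argument to rule out $X\in W$), but there are two concrete problems.

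\textbf{A local error in the existence paragraph.} You write that $H$ is definable in $M_U$, hence $M_U[H]=M_U$. This cannot be right: the coordinate-$\kappa$ part of $H$ is $\GX$ itself, which is $\pX$-generic over $V^1\supseteq M_U$ and so is not an element of $M_U$. What is true (and what the paper says) is that $j_U``\GX$ determines the tail $H\uhr[\kappa,j_U(\kappa))$, so that $G^*=\GX * H$ is the \emph{unique} $j_U(\pX)$-generic over $M_U$ containing $j_U``\GX$; the lift then goes $j_U^+ : V^1[\GX]\to M_U[G^*]$, not into $M_U$. Your subsequent "$M_W=M_U$" carries the same slip. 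This is likely a typo, but as written it is false and the later identifications should be stated with $M_U[G^*]$.

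\textbf{A genuine gap in the surjectivity argument.} To conclude that every normal measure $W$ on $\kappa$ in $V^1[\GX]$ is of the form $U^X$, you set $U:=W\cap V^1$ and try to show (a) $j_W\uhr V^1 = j_U$ and (b) $U$ is a normal measure on $\kappa$ \emph{in} $V^1$. Your justification for (a) --- $\pX$ is $\kappa^+$-c.c., so $(\kappa^+)^{M_U}=(\kappa^+)^{M_W}=(\kappa^+)^{V^1}$, forcing the factor map $k$ to be the identity on $\kappa^++1$ --- only gives $k\uhr(\kappa^++1)=\mathrm{id}$, not that $k$ is surjective, i.e.\ it does not rule out that some $j_W(g)(\kappa)$ with $g\in V^1[\GX]$ fails to be of the form $j_W(g')(\kappa)$ with $g'\in V^1$. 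For a poset of size exactly $\kappa$ (not $<\kappa$, so L\'{e}vy--Solovay does not apply) this covering statement is not automatic: it is precisely the sort of thing the Friedman--Magidor machinery is designed to control, and it cannot be waved through by appealing to the chain condition. Worse, for (b) you only verify the normal-measure \emph{axioms} for $U=W\cap V^1$ and never show $U\in V^1$; since $\pX$ adds new subsets of $\kappa$ (each nontrivial $\Code^*(\nu)$ adds a club in $\nu^+<\kappa$, and the sequence of these codes a new subset of $\kappa$), there is no a priori reason that $W\cap V^1$ should be a set of $V^1$. The paper sidesteps both issues by a completely different route: it restricts $j_W$ to $V=\K(V^1[\GX])$, uses inner-model theory to realize $j_W\uhr V$ as a normal iteration of $V$, and then reruns the analysis of Proposition~\ref{Proposition - I - Uniquness of U^*} to show directly that $W\supseteq\Uo{\alpha}{\beta}$ for an explicit, already-known-to-be-in-$V^1$ measure $\Uo{\alpha}{\beta}$. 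That identification is what you would need to supply; the abstract small-forcing argument alone does not close this gap.

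Your stationarity argument for $X\notin W$ matches the paper's (and is better phrased working with $j_W$ directly, since $\kappa\in j_W(X)$ needs no detour through $j_U$): coordinate $\kappa$ of $j_W(\pX)$ is $\Code^*(\kappa)$, the generic there is a club in $\kappa^+$ disjoint from $X^\kappa_0$, the club lies in $V^1[\GX]$ by $\kappa$-closure of $M_W$, and this contradicts stationarity of $X^\kappa_0$ preserved by the $\kappa$-sized poset $\pX$. This part is fine.
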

\begin{proof}
Let $U \in V^1$ be  normal measure on $\kappa$ such that $X \not\in U$,
and $j : V^1 \to M^1 \cong \Ult(V^1,U)$ be its ultrapower embedding.
We have $j(\pX)\uhr\kappa = \pX$. Also, stage $\kappa$ in 
$j^1(\pX)$ is trivial as $\kappa \not\in j(X)$. 
Like the Friedman-Magidor poset, $\pX$ satisfies that for every dense open set $D \subset j(\pX)$, there is some $g \in \GX$ so that
$j(g)$ reduces $D$ to a dense open set in $j(\pX)\uhr(\kappa+1) = \pX$, which is intersected by $\GX$. Thus $j``\GX$ determines a unique generic filter $H^X \subset j(\pX)\setminus \kappa$ over $M^1[\GX]$. Setting $G^* = \GX * H^X$, we get that $G^* \subset j(\pX)$ is the unique generic filter
over $M^1$ for which $j``\GX \subset G^*$.  
It follows that
$j^* : V^1[\GX] \to M^1[G^*]$ is the only extension of $j : V^1 \to M^1$ to $V^1[\GX]$ and that $U^X = \{Y \subset \kappa \mid \kappa \in j^*(Y)\}$ is the only extension of $U$ in $V^1[\GX]$.
For $\alpha,\beta < o(\kappa)$ such that  $X \not\in \Uo{\alpha}{\beta}$, we denote $(\Uo{\alpha}{\beta})^X$ by $\UX{\alpha}{\beta}$.

Suppose now that $W \in V^1[\GX]$ is a normal measure on $\kappa$ and $j_W : V^1[\GX] \to M_W$ be the resulting ultrapower embedding.
Then $j = j_W \uhr V : V \to M$ is an iteration of $V$ and
$G_W = j_W(G) \subset j(\pzero * \pone * \pX)$ is generic over $M$. 
We first claim $X \not\in W$. Otherwise, $\kappa \in j_W(X)$, so $\kappa$ is a non trivial forcing stage
in $j_W(\pX)$, and $\qX_\kappa = \Code^*(\kappa)$. 
It follows that $G_W$ introduces a club $D \subset \kappa^+$, disjoint 
from $j_W(f)(\kappa)_0 = X^\kappa_0$.
Note that $D$ is a club in $V^1[\GX]$ since $M_W$ is closed under $\kappa-$sequences. This is absurd as $X^\kappa_0$ is stationary in $V^1$ and $|\pX| = \kappa$.\\
To show that $W = \UX{\alpha}{\beta}$ for some $\alpha,\beta < o(\kappa) = \lambda$, it is sufficient to verify that $\Uo{\alpha}{\beta} \subset W$.
This is an immediately consequence of the proof of Proposition \ref{Proposition - I - Uniquness of U^*}: Considering the restriction $\pi = j_W\uhr V  : V \to M_W$ and its extension $\ref{Proposition - I - Uniquness of U^*}= j_W\uhr V^0 : V[\Gzero] \to M_W[\Gzero_W]$, 
we get that $\alpha,\beta$ are determined from the values $o^{M_W}(\kappa)$, 
$s^{G^W}_{j_W(\kappa)}(\kappa)$, and whether $\Gamma \in W$.
{The proof of Proposition \ref{Proposition - I - Uniquness of U^*} relies solely on the analysis of the iterations of $\pi$, $\pi^0$, and therefore applies here is well.}
\end{proof}
\noindent Suppose $\UX{\alpha}{\beta} \in V^1[\GX]$ be a normal measure on $\kappa$, and  let $\jX{\alpha}{\beta} : V^1[\GX] \to \MX{\alpha}{\beta}$ be its ultrapower embedding.
We have that $\jX{\alpha}{\beta}\uhr V^1 = \jo{\alpha}{\beta}$, thus
$\jX{\alpha}{\beta}\uhr V^0 = \pi^0_{\alpha,\beta}: V^0 \to Z^0_{\alpha,\beta}$. $\pi^0_{\alpha,\beta}$, $Z^0_{\alpha,\beta}$ were used to determine the Mitchell order on $\Uo{\alpha}{\beta}$, in Proposition \ref{Proposition - MO structure on U1}. 
It follows that the proof of this Proposition applies to $\UX{\alpha}{\beta}$ as well.
\begin{corollary}\label{Lemma - I - Cut down Lemma}
Suppose that $\UX{\alpha'}{\beta'},\UX{\alpha}{\beta} \in V^1$ where $\alpha' \leq \beta'$ and $\alpha \leq \beta$. We have that $\UX{\alpha'}{\beta'} \mo \UX{\alpha}{\beta}$ if and only if
$\beta' < \alpha$.
\end{corollary}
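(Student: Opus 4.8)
The plan is to transcribe the two implications of Proposition~\ref{Proposition - MO structure on U1}, exploiting the identity $\jX{\alpha}{\beta}\uhr V^0 = \pi^0_{\alpha,\beta} : V^0 \to Z^0_{\alpha,\beta}$ recorded before the statement (so that $\jX{\alpha}{\beta}\uhr V = \jo{\alpha}{\beta}\uhr V$ is the iteration $\T$ of Corollary~\ref{Corollary - I - Restriction of pi0 to V}). First I would note the uniform preliminaries: the hypothesis that $\UX{\alpha'}{\beta'},\UX{\alpha}{\beta}$ exist in $V^1[\GX]$ forces, by Lemma~\ref{Lemma - I - Cut Down Measure Preservation}, that $X\notin\Uo{\alpha'}{\beta'}$ and $X\notin\Uo{\alpha}{\beta}$. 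Since $\pX$ is set forcing over $V$ we still have $\K(V^1[\GX]) = V$, hence by uniqueness of the core model $\K(\MX{\alpha}{\beta}) = Z_{\alpha,\beta}$ with $o^{Z_{\alpha,\beta}}(\kappa) = \alpha$; more generally the inner model theory facts recorded before the second Claim of Section~\ref{Section - I - mo structure} go through verbatim with $V^1,\Uo{\cdot}{\cdot}$ replaced by $V^1[\GX],\UX{\cdot}{\cdot}$. Finally, since $\cp(\jX{\alpha}{\beta}) = \kappa$ and $\MX{\alpha}{\beta} = \Mo{\alpha}{\beta}[G^*]$ with $G^*\subset\jo{\alpha}{\beta}(\pX)$ as in Lemma~\ref{Lemma - I - Cut Down Measure Preservation}, I would record that $\GX = G^*\uhr\kappa \in\MX{\alpha}{\beta}$ and that $\Mo{\alpha}{\beta}\subseteq\MX{\alpha}{\beta}$ (the latter because $\jX{\alpha}{\beta}$ and $\jo{\alpha}{\beta}$ agree on $V^1$).

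For the implication $\beta'<\alpha \Rightarrow \UX{\alpha'}{\beta'}\mo\UX{\alpha}{\beta}$, the plan is: by the case analysis in the proof of the first Claim of Proposition~\ref{Proposition - MO structure on U1}, $\beta'<\alpha$ already yields $\Uo{\alpha'}{\beta'}\in\Mo{\alpha}{\beta}$ --- the relevant $V^0$-measures ($\Uz{\alpha'}{\beta'}$, and also $\Uz{\beta'}{\alpha'}$ when $\alpha'<\beta'$) lie in $Z^0_{\alpha,\beta}\subseteq\Mo{\alpha}{\beta}$, while $\Gzero,\Gone\in\Mo{\alpha}{\beta}$, so the defining clauses of $\Uo{\alpha'}{\beta'}$ can be carried out inside $\Mo{\alpha}{\beta}$. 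Then $\Uo{\alpha'}{\beta'}\in\Mo{\alpha}{\beta}\subseteq\MX{\alpha}{\beta}$ and $\GX\in\MX{\alpha}{\beta}$, and the construction of Lemma~\ref{Lemma - I - Cut Down Measure Preservation} produces $\UX{\alpha'}{\beta'}$ from the parameters $\Uo{\alpha'}{\beta'}$ and $\GX$ in an absolute fashion (it lifts the ultrapower of $\Uo{\alpha'}{\beta'}$ to the generic extension). Hence $\UX{\alpha'}{\beta'}\in\MX{\alpha}{\beta}$, i.e., $\UX{\alpha'}{\beta'}\mo\UX{\alpha}{\beta}$.

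For the converse, suppose $\UX{\alpha'}{\beta'}\mo\UX{\alpha}{\beta}$ and let $j' : \MX{\alpha}{\beta}\to\Ult(\MX{\alpha}{\beta},\UX{\alpha'}{\beta'})$. I would argue as in the second Claim of Section~\ref{Section - I - mo structure}: the restriction $j'\uhr Z_{\alpha,\beta}$ (with $Z_{\alpha,\beta}=\K(\MX{\alpha}{\beta})$) is realized by a normal iteration $T'$ which agrees, up to the image of $\kappa^+$, with the iteration $T$ generating $\jX{\alpha'}{\beta'}\uhr V = \jo{\alpha'}{\beta'}\uhr V$; by Corollary~\ref{Corollary - I - Restriction of pi0 to V} the first step of $T$ is the ultrapower by $U_{\alpha'}$, whence $U_{\alpha'}\in Z_{\alpha,\beta}$ and $\alpha'<o^{Z_{\alpha,\beta}}(\kappa)=\alpha$. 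In particular $\beta'<\alpha$ when $\alpha'=\beta'$. If $\alpha'<\beta'$, then $T$ and $T'$ also agree at $j_{\alpha'}(\kappa)$, a critical point of $T$ via the ultrapower by $j_{\alpha'}(U_{\beta'})$, so $j_{\alpha'}(U_{\beta'})$ belongs to the corresponding iterand $N'$ of $T'$, giving $j_{\alpha'}(\beta') = o(j_{\alpha'}(U_{\beta'})) < o^{N'}(j_{\alpha'}(\kappa)) = j_{\alpha'}(\alpha)$, and therefore $\beta'<\alpha$.

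The one step that needs genuine care is the forward implication: one must verify that $\UX{\alpha'}{\beta'}$ really is obtained \emph{inside} $\MX{\alpha}{\beta}$ by the Lemma~\ref{Lemma - I - Cut Down Measure Preservation} construction from parameters available there, which reduces to the observation that $\MX{\alpha}{\beta}$ agrees with $V^1[\GX]$ on subsets of $\kappa$ and on $\pX$, hence computes the lifted measure correctly. Everything else is a transcription of the proofs of Propositions~\ref{Proposition - MO structure on U1} and~\ref{Proposition - I - Uniquness of U^*}, which apply because $\pX$ is set forcing over $V$ and $\jX{\alpha}{\beta}$ restricts to the same iterations of $V$ and of $V^0$ as $\jo{\alpha}{\beta}$.
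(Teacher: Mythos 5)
Your proposal is correct and follows essentially the same route as the paper: the paper's (unstated) proof is exactly to observe that $\jX{\alpha}{\beta}\uhr V^1 = \jo{\alpha}{\beta}$ (hence $\jX{\alpha}{\beta}\uhr V^0 = \pi^0_{\alpha,\beta}$ and $\K(\MX{\alpha}{\beta}) = Z_{\alpha,\beta}$) and replay both directions of Proposition~\ref{Proposition - MO structure on U1}, which is what you do. Your extra care on the forward direction --- that $\MX{\alpha}{\beta} = \Mo{\alpha}{\beta}[G^*]$ contains $\Uo{\alpha'}{\beta'}$ and $\GX$, is $\kappa$-closed, and $|\pX|=\kappa$, so it computes the lift $\UX{\alpha'}{\beta'}$ correctly --- is a reasonable elaboration of the paper's terse ``it follows that the proof of this Proposition applies''.
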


\begin{lemma}[The final cut]\label{Lemma - The final cut}
Let $\kappa$ be a measurable cardinal in a transitive model of set theory $V$
so that the normal measures on $\kappa$ are separated by sets.
Suppose that for every $X \subset \kappa$ there is a poset $\pX \in V$ so that
\begin{enumerate}
\item The normal measures on $\kappa$ which extend in a $\pX$ generic extension, are exactly the normal measures $U \in V$ which do not contain $X$. Furthermore, If $X \not\in U$ then $U$ has a unique extension $U^X \in V^{\pX}$.

\item $\pX$ preserves the Mitchell order in $V^1$. Namely, for every $U,W \in V$ which extend to $U^X,W^X$ respectively, 
$U^X \mo W^X$ if and only if $U \mo W$. 
\end{enumerate}
Then for every $\W \subset \mo(\kappa)^V$ of cardinality $\leq \kappa$
there is a set $X \subset \kappa$ such that 
$\mo(\kappa)^{V^{\pX}} \cong \mo(\kappa)^V \uhr \W$.  
\end{lemma}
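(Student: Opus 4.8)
The plan is to reduce the statement to the construction of a single set $X \subseteq \kappa$ which \emph{isolates} $\W$, in the sense that for every normal measure $U$ on $\kappa$ in $V$,
\[ X \notin U \iff U \in \W . \]
Granting such an $X$, the lemma follows at once from the two hypotheses. By hypothesis (1) the normal measures on $\kappa$ in $V^{\pX}$ are exactly the extensions $\{W^X \mid W \in \W\}$, and the map $W \mapsto W^X$ is a bijection of $\W$ onto $\mo(\kappa)^{V^{\pX}}$ — it is injective because $W = W^X \cap \power(\kappa)^V$. By hypothesis (2), $W^X \mo (W')^X$ if and only if $W \mo W'$ for all $W,W' \in \W$, so this bijection is an isomorphism of $(\W,\mo\uhr\W)$ onto $(\mo(\kappa)^{V^{\pX}},\mo)$, which is the desired conclusion.

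To build $X$, I would enumerate $\W = \{W_i \mid i < \theta\}$ with $\theta = |\W| \leq \kappa$. Since the normal measures on $\kappa$ in $V$ are separated by sets, fix for each $i < \theta$ a set $X_{W_i} \in W_i$ with $X_{W_i} \notin U$ for every normal measure $U \neq W_i$ on $\kappa$, and put
\[ Y_i \;=\; X_{W_i} \setminus \Bigl( (i+1) \cup \bigcup_{j<i} X_{W_j} \Bigr), \qquad X \;=\; \kappa \setminus \bigcup_{i<\theta} Y_i . \]
Each $Y_i$ still lies in $W_i$: indeed $X_{W_i} \setminus (i+1) \in W_i$, and for every $j<i$ the set $X_{W_j}$ is not in $W_i$, hence $\kappa \setminus X_{W_j} \in W_i$; since $W_i$ is $\kappa$-complete and there are fewer than $\kappa$ many $j<i$, the intersection defining $Y_i$ belongs to $W_i$. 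Moreover the $Y_i$ are pairwise disjoint — for $j<i$ we have $Y_i \subseteq \kappa \setminus X_{W_j}$ whereas $Y_j \subseteq X_{W_j}$ — and $\min Y_i > i$ for all $i$.

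Finally I would check that $X$ isolates $\W$. If $U = W_i \in \W$, then $Y_i \in W_i$ and $Y_i$ is disjoint from $X$, so $X \notin W_i$. Conversely, suppose $U$ is a normal measure on $\kappa$ in $V$ with $U \notin \W$; I must show $B := \bigcup_{i<\theta} Y_i \notin U$, which yields $X \in U$. If instead $B \in U$, define $g : B \to \theta$ by letting $g(\nu)$ be the unique index $i$ with $\nu \in Y_i$ (well defined by disjointness). Since $\min Y_i > i$, the function $g$ is regressive on $B$, so by normality of $U$ it is constant on a set in $U$; that is, $Y_i \in U$ for some $i < \theta$. But $Y_i \subseteq X_{W_i}$ while $X_{W_i} \notin U$ (as $U \neq W_i$), so $\kappa \setminus X_{W_i} \in U$, contradicting $Y_i \in U$. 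This last step is the crux: when $|\W| = \kappa$ the union $B$ need not be $U$-small by $\kappa$-completeness alone, and it is the disjointness of the $Y_i$, the bound $\min Y_i > i$, and the normality of $U$ together that rule out $B \in U$.
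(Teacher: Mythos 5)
Your proof is correct and follows essentially the same route as the paper: reduce the lemma to building a single set $X$ that isolates $\W$, then form $X$ from the separating sets via a diagonal construction and verify the isolation property using normality. The paper compresses this into "$X = \kappa \setminus \triangle_{i<\rho}X_i$ (diagonal union if $\rho=\kappa$, ordinary union otherwise)"; your disjointified sets $Y_i$ and the regressive-function argument for $B \notin U$ are precisely the standard proof of the diagonal-intersection property unpacked, so the two arguments coincide.
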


\begin{proof}
Let $\la U_i \mid i < \rho\ra$ be an enumeration of $\W$, where $\rho \leq \kappa$ is a cardinal.
For every $i  <\rho$ let $X_i \subset \kappa$ be a set which separates $U_i$
from the rest of the normal measures on $\kappa$. Let $X_{\W} = \triangle_{i < \rho}X_i$, where $\triangle_{i<\rho}$ is the diagonal union if $\rho = \kappa$, and an ordinary union otherwise. It follows that the set $X = \kappa \setminus X_{\W}$ belongs to a normal measure $U \in V$, if and only if $U \not\in \W$. 
Thus, it follows from the rest of the assumptions that
$\mo(\kappa)^{V^{\pX}} \cong \mo(\kappa)^V \uhr \W$.
\end{proof}

\begin{theorem}\label{Theorem - I - main theorem}
Suppose that $V = L[\U]$ is a Mitchell model and $(S,<_S)$ is a tame order so that
$|S| \leq \kappa$ and $\tamerank(S) \leq o^{\U}(\kappa)$. Then there is a cofinality preserving generic extension $V^*$ of $V$ such that $\mo(\kappa)^{V^*} \cong \thinspace (S,<_S)$.
\end{theorem}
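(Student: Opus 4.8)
The plan is to follow the three-step scheme from the introduction: build an intermediate model $V^1$ in which $\mo(\kappa)$ is as rich as $o^{\U}(\kappa)$ permits and the normal measures on $\kappa$ are separated by sets, realize $(S,<_S)$ as a suborder of $\mo(\kappa)^{V^1}$, and then apply the final cut. First I would set $\lambda:=o^{\U}(\kappa)$ and record the combinatorial input. Since $\tamerank(S)$ is a genuine ordinal, $(S,<_S)$ is well-founded; its reduction $([S],<_{[S]})$ is well-founded, reduced, and still tame — an embedding of $R_{2,2}$ or $S_{\omega,2}$ into $[S]$ lifts to one into $S$ by choosing representatives, using that $[x]<_{[S]}[y]\iff x<_S y$ for representatives of distinct classes — and $\tamerank([S])=\tamerank(S)$ because $(\CU([S]),\subset)$ and $(\CU(S),\subset)$ are isomorphic. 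So by Proposition \ref{Proposition - tame} there is an embedding $\bar\phi:[S]\hookrightarrow R_{\tamerank(S)}\subseteq R_\lambda$.

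Next I would force over $V$ with $\pzero*\pone$, in the form of Section \ref{Section - I - final cut} valid for $\lambda=o^V(\kappa)<\kappa^+$, to obtain $V^1$, and assemble the structural facts established above. By Proposition \ref{Proposition - I - Uniquness of U^*} the normal measures on $\kappa$ in $V^1$ are exactly $\{\Uo{\alpha}{\beta}\mid\alpha,\beta<\lambda\}$; by Proposition \ref{Proposition - MO structure on U1} and the remark following it, the map $\Uo{\alpha}{\beta}\mapsto(\alpha,\max(\alpha,\beta))$ identifies $[\mo(\kappa)^{V^1}]$ with $R_\lambda$, with $\Uo{\alpha'}{\beta'}\mo\Uo{\alpha}{\beta}\iff\max(\alpha',\beta')<\alpha$, so that $\mo(\kappa)^{V^1}$ is the pullback of $(R_\lambda,<_{R_\lambda})$ along this map and each fibre is a $\mo$-antichain; by Corollary \ref{Corollary - I - U1 separating sets} these measures are separated by sets; and $V^1$ preserves cardinals and cofinalities over $V$ (Fact \ref{Fact - I - Friedman Magidor}(1) for $\pzero$, the standard theory of Magidor iterations of Prikry-type forcings for $\pone$).

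Using $\bar\phi$ and this fibre structure I would then produce an order-embedding $\psi:(S,<_S)\hookrightarrow(\mo(\kappa)^{V^1},\mo)$. Because $\mo(\kappa)^{V^1}$ is a pullback of the reduced order $R_\lambda$ with antichain fibres, any such $\psi$ must send each $\sim_S$-class into a single fibre and induce an embedding of the reductions; conversely, it suffices to inject each $\sim_S$-class $c$ into the fibre over $\bar\phi(c)$, since then $\psi(x)\mo\psi(y)\iff\bar\phi[x]<_{R_\lambda}\bar\phi[y]\iff x<_S y$ and $\psi$ is automatically injective. The fibre over a diagonal point $(\alpha,\alpha)$ of $R_\lambda$ has cardinality $|\alpha+1|$, so one arranges $\bar\phi$ to send each $\sim_S$-class of size $\ge 2$ far enough up the diagonal of $R_\lambda$ that its fibre is wide enough to receive it; this is where the hypotheses $\tamerank(S)\le\lambda$ and $|S|\le\kappa$ are used together. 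Setting $\W:=\rng(\psi)$ we get $\mo(\kappa)^{V^1}\uhr\W\cong(S,<_S)$ with $|\W|=|S|\le\kappa$, after which I would invoke Lemma \ref{Lemma - The final cut} in $V^1$: its hypotheses hold by Corollary \ref{Corollary - I - U1 separating sets} (separation), Lemma \ref{Lemma - I - Cut Down Measure Preservation} (each $\pX$ adds a unique extension of precisely the normal measures omitting $X$, and these are all the normal measures of $V^1[\GX]$), and Corollary \ref{Lemma - I - Cut down Lemma} together with the remark that the proof of Proposition \ref{Proposition - MO structure on U1} applies verbatim to the cut-down measures ($\mo$-preservation). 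This yields $X\subseteq\kappa$ with $\mo(\kappa)^{V^1[\GX]}\cong\mo(\kappa)^{V^1}\uhr\W\cong(S,<_S)$; setting $V^*:=V^1[\GX]=V[\Gzero*\Gone*\GX]$ and using $|\pX|=\kappa$ and that $\pX$, a variant of the Friedman-Magidor poset, preserves cofinalities, $V^*$ is the desired cofinality-preserving extension of $V$.

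The step I expect to be the main obstacle is the construction of $\psi$ in the third paragraph. Proposition \ref{Proposition - tame} governs only the "depth" of $[S]$, via $\tamerank$, whereas here the "widths" of the $\sim_S$-classes must be fitted simultaneously into the diagonal fibres of $\mo(\kappa)^{V^1}$ without disturbing the order; verifying that the hypotheses leave exactly enough room for this simultaneous fit is the delicate point, while everything else is an assembly of results already proved.
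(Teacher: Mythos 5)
Your overall architecture is right (reduce to $[S]$, realize $R_\lambda$ via $\pzero*\pone$, then apply the final cut), and the first two paragraphs match the paper. The gap is in the third paragraph, and it is not merely a delicate point to verify: the scheme you describe cannot work with the poset from Sections~\ref{Section - I - Posets}--\ref{Section - I - identify normal measures}, and the paper repairs it precisely by \emph{changing} the poset, not by arranging $\bar\phi$.

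Concretely, with the $\pzero*\pone$ of Section~\ref{Section - I - Posets}, Proposition~\ref{Proposition - I - Uniquness of U^*} gives exactly the normal measures $\{\Uo{\alpha}{\beta}\mid\alpha,\beta<\lambda\}$, so $\mo(\kappa)^{V^1}$ has cardinality $|\lambda|$. Since the theorem allows $\tamerank(S)\le\lambda$ with $\lambda$ much smaller than $\kappa$ while $|S|=\kappa$ (e.g.\ a $\kappa$-element antichain has $\tamerank=1$), there are not even enough measures to receive $S$. Even when a cardinality count does not kill it, your fibre analysis shows the fibre over an off-diagonal point $(\alpha,\beta)$ with $\alpha<\beta$ is a singleton, and the fibre over $(\alpha,\alpha)$ has size $|\alpha+1|<|\lambda|$; moreover $\bar\phi$ is \emph{not} freely arrangeable --- the embedding of Proposition~\ref{Proposition - tame} sends $[x]$ to $(\otp(<_{\D}(x)),\otp(<_{\CU}(x)))$, and whether this lands on the diagonal is forced by the order. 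A non-singleton $\sim_S$-class which is (say) incomparable to a two-element chain must map off the diagonal in $R_2$, where the fibre is a singleton. So the hypotheses do \emph{not} ``leave exactly enough room''; the plan as stated fails.

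What the paper actually does (the second half of the proof of Theorem~\ref{Theorem - I - main theorem}) is modify $\pzero$ so that the generic Sacks function has the form $s_\nu:\nu\to\rho_\lambda(\nu)\times\nu$ (using canonical functions $\rho_\tau$ to encode $\lambda<\kappa^+$), and correspondingly modify $\pone$. This produces normal measures $\Uo{\alpha}{\beta,\mu}$ with a third index $\mu<\kappa$, and the analogue of Proposition~\ref{Proposition - MO structure on U1} shows that for fixed $\alpha\le\beta$ the measures $\{\Uo{\alpha}{\beta,\mu}\mid\mu<\kappa\}$ form a $\mo$-antichain which is a single $\sim$-class. Thus \emph{every} fibre over $R_\lambda$ has cardinality $\kappa$, uniformly, and any tame $S$ with $|S|\le\kappa$ and $\tamerank(S)\le\lambda$ embeds as a suborder $\W$ with $|\W|\le\kappa$, after which the final cut Lemma~\ref{Lemma - The final cut} applies exactly as you say. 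You should also note explicitly that $\pX$ (via Lemma~\ref{Lemma - I - Cut Down Measure Preservation}) preserves cofinalities since $|\pX|=\kappa$ and it is a variant of the Friedman--Magidor coding, which is what makes $V^*$ a cofinality-preserving extension.
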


\begin{proof}
If $\lambda \leq \kappa$ and $(S,<_S)$ is reduced then $(S,<_S)$ embeds
into $R_\lambda$ for every $\lambda \geq \tamerank(S,<_S)$ (Proposition \ref{Proposition - tame}). We verify that the claim is an immediate consequence of the results established in Sections \ref{Section - I - mo structure} and \ref{Section - I - identify normal measures}. We may assume that $S \subset R_\lambda$, and force with $\pzero*\pone$ over $V = L[\U]$ to obtain a generic extension $V^1$ of $V$, so that
\begin{itemize}
\item the normal measures on $\kappa$ are separated by sets (Proposition \ref{Proposition - I - Uniquness of U^*} and Corollary \ref{Corollary - I - U1 separating sets}), and
\item there are distinguished normal measures $\Uo{\alpha}{\beta}$, $\alpha \leq \beta < \lambda$,
so that $\mo(\kappa)^{V^1}\uhr \{\Uo{\alpha}{\beta} \mid \alpha \leq \beta\} \thinspace \cong \thinspace R_\lambda$ (Proposition \ref{Proposition - MO structure on U1}).
\end{itemize} 
Let $\W = \{\Uo{\alpha}{\beta} \mid (\alpha,\beta) \in S\}$. $|\W| \leq \kappa$ since $|S| \leq \kappa$, and by Lemma \ref{Lemma - The final cut}, there is
a set $X \subset \kappa$ so that in a generic extension of $V^1$ by $\pX$, $\mo(\kappa) \cong \mo(\kappa)^{V^1}\uhr S \cong (S,<_S)$.\\

\noindent Next, we describe how to modify $\pzero$ and $\pone$ to deal with arbitrary tame orders $(S,<_S)$ of cardinality $\leq \kappa$. Let $\la \rho_\tau \mid \tau < \kappa^+\ra$ be a sequence of canonical functions on $\kappa$,
so that each $\rho_\tau$ has Galvin-Hajnal norm $\tau$. 
If $j : V \to M$ with $\cp(j) = \kappa$ then $j(\rho_\tau)(\kappa) = \tau$ for every $\tau < \kappa^+$. Also, for every $\alpha<\beta< \kappa^+$ the set $\{\nu < \kappa \mid \rho_\alpha(\nu) \not < \rho_\beta(\nu)\}$ is bounded in $\kappa$.
Since $\lambda < \kappa^+$ we may choose the functions $\la \rho_\tau \mid \tau < \lambda\ra$ so that
 $\{\nu < \kappa \mid \rho_\alpha(\nu) \geq \rho_\beta(\nu)\} = \emptyset$ for every
$\alpha < \beta \leq \lambda$. 
For each $\alpha < \lambda$ let $\Delta_\alpha =  \{\nu < \kappa \mid o(\nu) = \rho_{\alpha}(\nu)\}$. It follows that the sets $\Delta_\alpha$, $\alpha < \lambda$, are pairwise disjoint. 
We proceed as follows:\\

\noindent\textbf{1.} It is not difficult to verify that there is a set $\Delta \in \bigcap_{\alpha < \lambda}U_{\kappa,\alpha}$, so that each $\nu \in \Delta$ is an inaccessible cardinal, a closure point of $\rho_\lambda$, and satisfies that $\rho_\lambda\uhr \nu$ has a Galvin-Hajanl rank $\rho_\lambda(\nu) < \nu^+$.
$\pzero = \la \pzero_\nu, \qzero_\nu \mid \nu \leq \kappa\ra$ is a Friedman-Magidor iteration where
for each $\nu < \kappa$,  $\qzero_\nu$ is non-trivial if and only if $\nu \in \Delta \cup\{\kappa\}$, where $\qzero_\nu = \Sacks_{\rho_\lambda\uhr \nu}(\nu) * \Code(\nu)$ is defined by
\begin{itemize}
\item conditions $T \in \Sacks_{\rho_\lambda\uhr \nu}(\nu)$ are the
 trees $T \subset {}^{<\nu}\nu\times\nu$ for which there is a club $C \subset \nu$ so that if 
$s \in T$ and $\len(s) \in C$ then $s \fr \la ( \eta,\mu ) \ra \in T$ for every $\eta < \rho_{\lambda}(\len(s))$ and $\mu < \len(s)$.\\
The forcing $\Sacks_{\rho_\lambda\uhr \nu}(\nu)$ introduces a generalized Sacks function 
$s_\nu : \nu \to \rho_{\lambda}(\nu) \times \nu$.
\item $\Code(\nu)$ is a Friedman-Magidor coding poset, which introduces a club $C_\nu \subset \nu^+$ coding both $s_\nu$ and itself.
\end{itemize}

\noindent Let $V^0 = V[\Gzero]$ where $\Gzero \subset \pzero$ is a $V-$generic filter. 
For each $( \eta,\mu)\in \lambda \times \kappa$ and $\alpha < o(\kappa)$ define
$\Delta_\alpha(\eta,\mu) = \{ \nu \in \Delta \cap \Delta_\alpha \mid s_\nu = s_\kappa\uhr\nu \text{ and } s_\kappa(\nu) = (\rho_\eta(\nu),\mu)\}$.
We get that $\{ \Delta_\alpha(\eta,\mu) \mid \alpha < o(\kappa),\eta < \lambda,\mu < \kappa\}$ are pairwise disjoint. 
The description of the normal measures on $\kappa$ in Section \ref{Section - I - Posets} show that each  normal measure $U_{\alpha}$ in $V$ extends in $V^0$ to $\{\Uz{\alpha}{\eta,\mu} \mid \eta < \lambda, \mu < \kappa\}$ and that $\Delta(\eta,\mu) \in \Uz{\alpha}{\eta,\mu}$. \\
The parameters $\alpha,\eta < \lambda$ in $\Uz{\alpha}{\eta,\mu}$ will be associated with elements $(\alpha,\eta) \in R_\lambda$. The additional parameter $\mu < \kappa$ 
will guarantee that there are $\kappa$ $\mo-$equivalent copies of each $(\alpha,\eta) \in R_\lambda$, thus allowing us to realize non-reduced orders $(S,<_S)$ where each $\sim_S$ equivalent class has cardinality $\leq \kappa$. \\

\noindent \textbf{2.}
Next, we force over $V^0$ with a Magidor iteration of Prikry forcings,
$\pone = \la \pone_\nu, \qone_\nu \mid \nu < \kappa\ra$.
The recipe for choosing the normal measure on $\nu$ to be used at non-trivial iteration stages, is similar to the recipe used in Section \ref{Section - I - Posets} (Definition 
\ref{Definition - recipe for Qone}), i.e., if $\nu \in \Delta_\alpha(\beta,\mu)$ for 
some $\beta < \lambda$ and $\mu < \kappa$, then
\[\qone_\nu = 
\begin{cases}
    Q(U^1_{\nu,(\alpha,\beta,\mu)}) &\mbox{if }  \beta < \alpha \\
  0 - \text{the trivial forcing}  &\mbox{otherwise}
\end{cases}\]

\noindent Here, $U^1_{\nu,(\alpha,\beta,\mu)}$ is a normal measure on $\nu$ in $V^0[G^1\uhr \nu]$
which extends the measure $U^0_{\nu,(\beta,\alpha,\mu)} \in V^0$ (thus, extending $U_{\nu,\beta} \in V$). The definitions of $U^1_{(\alpha,\beta,\mu)}$ ($\alpha \geq \beta$ and $\alpha < \beta$)  are similar to those of $U^1_{\alpha,\beta}$. Here, for $\alpha \geq \beta$, the $U^0_{\beta,\alpha}$ ultrapower in Definition \ref{Definition - I - U^* single ultrapower} is replaced with an ultrapower by 
$U^0_{\beta,\alpha,\mu}$; for $\alpha < \beta$, the ultrapower by $U^0_{\alpha,\beta} \times U^0_{\beta,\alpha}$ in Definition \ref{Definition - I - k alpha beta embedding} is replaced with an ultrapower by $U^0_{\alpha,\beta,\mu} \times U^0_{\beta,\alpha,\mu}$. \\
Therefore, a $V^0$ generic filter $\Gone \subset \pone$ introduces a Prikry (partial) function $d : \Delta \to \kappa$, where 
\begin{itemize}
\item $\nu \in \dom(d)$ if and only if there are $\alpha < \beta < \lambda$ and $\mu < \nu$ so that $\nu \in \Delta_\beta(\alpha,\mu)$, and then 
\item $d(\nu) \in \Delta_\alpha(\beta,\mu) \cap \nu$ (for all but finitely many $\nu$).
\end{itemize}
It follows that for every $\alpha < \beta <\lambda$ and $\mu < \kappa$, the function $\nu \mapsto (\nu,d^{-1}(\nu))$ introduces a projection of $\Uo{\alpha}{ \beta,\mu} \in V^1$ to an extension of the product
$\Uz{\alpha}{\beta,\mu} \times \Uz{\beta}{\alpha,\mu} \in V^0$ (thus, extending $U_\alpha \times U_\beta \in V$).
When $\alpha = \beta$, $\Uo{\alpha}{\alpha,\mu}$ extends $\Uz{\alpha}{\alpha,\mu} \in V^0$. 
The obvious modification of the proof of Proposition \ref{Proposition - MO structure on U1} implies that for every  $\Uo{\alpha'}{\beta',\mu'}$,$\Uo{\alpha}{\beta,\mu}$ in $V^1$, where $\alpha \leq \beta$ and $\alpha'\leq \beta'$, we have that 
\[\Uo{\alpha'}{\beta',\mu'} \mo \Uo{\alpha}{\beta,\mu} \iff \beta' < \alpha.\]
In particular, when restricting $\mo$ to these measures we see that for every
$\alpha \leq \beta < \lambda$, the normal measures in $\{ \Uo{\alpha}{\beta,\mu} \mid \mu < \kappa\}$ are $\mo$ equivalent.\\

\noindent \textbf{3.}
Let $([S],<_{[S]})$ be the reduction of $(S,<_S)$. $([S],<_{[S]})$ is reduced and $\tamerank([S],<_{[S]}) = \tamerank(S,<_S) = \lambda$. Proposition \ref{Proposition - tame} implies that $([S],<_{[S]})$ embeds in $(R_\lambda,<_{R_\lambda})$.
Since each equivalent class in $[S]$ has size at most $\kappa$, it follows that there is a subset $\W \in V^1$ of normal measures on $\kappa$,
such that $\mo(\kappa)^{V^1} \uhr \W \cong (S,<_S)$.  By Lemma \ref{Lemma - The final cut} there is a set $X \subset \kappa$ so that in a generic extension of $V^1$ by $\pX$, $\mo(\kappa) \cong \mo(\kappa)^{V^1}\uhr \W \cong (S,<_S)$.
\end{proof}

\noindent\textbf{Acknowledgements -} 
The author would like to express his gratitude to his supervisor Professor Gitik,
 for many fruitful conversations, valuable guidance and encouragement.
The author is also grateful to the referee for making valuable suggestions and comments which 
greatly improved both the content and structure of this paper.

%
%
\raggedright

\end{document}